\newtheorem{thm}{Theorem}[section]
\newtheorem{cor}[thm]{Corollary}
\newtheorem{lem}[thm]{Lemma}
\newtheorem{prop}[thm]{Proposition}
\theoremstyle{definition}
\newtheorem{defn}[thm]{Definition}
\newtheorem{rem}[thm]{Remark}
\title{support $\tau$-tilting subcategories in exact categories}
\author{Jixing Pan, Yaohua Zhang and Bin Zhu}
\subjclass{16G10, 18E40, 16S90, 18E99}
\keywords{support $\tau$-tilting subcategories, exact categories, $\tau$-cotorsion pairs, tilting subcategories}
\address{J. Pan: Department of  Mathematical Sciences, Tsinghua University, Beijing 100048, P. R. China.}
\email{pjx19@mails.tsinghua.edu.cn}
\address{Y. Zhang: Department of  Mathematical Sciences, Tsinghua University, Beijing 100048, P. R. China.}
\email{2160501008@cnu.edu.cn}
\address{B. Zhu: Department of  Mathematical Sciences, Tsinghua University, Beijing 100048, P. R. China.}
\email{zhu-b@mail.tsinghua.edu.cn}
\begin{document}

\begin{abstract}

Let $\mathcal{E}=(\mathcal{A},\mathcal{S})$ be an exact category with enough projectives $\mathcal{P}$. We introduce the notion of support $\tau$-tilting subcategories of $\mathcal{E}$. It is compatible with existing definitions of support $\tau$-tilting modules (subcategories) in various context. It is also a generalization of tilting subcategories of exact categories. We show that there is a bijection between support $\tau$-tilting subcategories and certain $\tau$-cotorsion pairs. Given a support $\tau$-tilting subcategory $\mathcal{T}$, we find a subcategory $\mathcal{E}_{\mathcal{T}}$ of $\mathcal{E}$ which is an exact category and $\mathcal{T}$ is a tilting subcategory of $\mathcal{E}_{\mathcal{T}}$. If $\mathcal{E}$ is Krull-Schmidt, we prove the cardinal $|\mathcal{T}|$ is equal to the number of isomorphism classes of indecomposable projectives $Q$ such that ${\rm Hom}_{\mathcal{E}}(Q,\mathcal{T})\neq 0$. We also show a functorial version of Brenner-Butler's theorem.

\end{abstract}

\maketitle

\section{Introduction}

Classical tilting theory, as a generalization of Morita equivalences, gives equivalences of certain subcategories of the categories of finitely generated modules over finite dimensional algebras. It started with the Coxeter functors defined by Bernstein, Gelfand and Ponomarev in \cite{BGP}, then was generalized by Auslander, Platzeck and Reiten in \cite{APR} and was axiomatically described by Brenner and Butler in \cite{Brenner and Butler}. Happel and Ringel \cite{HR} defined tilted algebras as endomorphism algebras of tilting modules over hereditary algebras. After that, tilting theory was generalized in many directions. Recently, in chapter 7 of \cite{H.Krause} the author defined tilting objects in exact categories. In \cite{Sauter} the author defined tilting subcategories in exact categories, unified several existing definitions and gave the concept of {\em ideq} tilting. In \cite{Zhu and Zhuang}, the authors defined tilting subcategories in extriangulated categories.

$\tau$-tilting theory of finite dimensional algebra was introduced by Adachi, Iyama and Reiten in \cite{AIR}. It can be seen as a generalization of classical tilting theory from the viewpoint of mutation. For more development, one can refer to \cite{Treffinger}. Later Iyama, Jørgensen and Yang defined support $\tau$-tilting subcategories in functor categories (see \cite{IJY}). And Angeleri Hügel, Marks and Vitória \cite{AMV} introduced the notion of silting modules in ${\rm Mod}-R$, which generalizes tilting modules over arbitrary rings and coincides with support $\tau$-tilting modules when restricting to finitely generated modules over a finite dimensional algebra.

Recently, Liu and Zhou \cite{Liu and Zhou} defined support $\tau$-tilting subcategories in Hom-finite abelian categories with enough projectives. Then Asadollahi, Sadeghi and Treffinger \cite{AST} modified the definitions of Liu and Zhou to drop the Hom-finite assumption. They showed a bijection between support $\tau$-tilting subcategories and $\tau$-cotorsion torsion triples (cf. \cite[Theorem 5.7]{AST}). It is a generalization of \cite[Theorem 2.29]{BBOS} and \cite[Theorem 4.6]{BZ}. They also characterize all support $\tau$-tilting subcategories of ${\rm Mod}-R$ by using finendo quasitilting modules defined in \cite{AMV}.

Motivated by the mentioned works, we introduce support $\tau$-tilting subcategories in exact categories with enough projectives and generalize some well-known results to exact categories. 

The paper is organized as follows. In Section 2, we provide some preliminaries about exact categories and functor categories and some notions.

In Section 3, let $\mathcal{E}$ be an exact category with enough projectives $\mathcal{P}$, we define support $\tau$-tilting subcategories of $\mathcal{E}$ as follows. 

\begin{defn}(Definition \ref{def})
	An additively closed subcategory $\mathcal{T}$ of $\mathcal{E}$ is a {\em support $\tau$-tilting} subcategory if it satisfies:
	\begin{enumerate}
		\item ${\rm Ext}^{1}_{\mathcal{E}}(\mathcal{T},\mathsf{Fac}\mathcal{T})=0$,
		\item For every $P\in \mathcal{P}$ there is an exact sequence
		$$P\stackrel{f} \rightarrow T^{0}\twoheadrightarrow T^{1}$$
		where $T^{0},T^{1}\in \mathcal{T}$ and $f$ is a left $\mathcal{T}$-approximation.
	\end{enumerate}
\end{defn}

It is a generalization of (1-) tilting subcategories in the sense of \cite{Sauter}. We do not require $\mathcal{T}$ to be contravariantly finite as in \cite{AST}. Then we extend some elementary results in \cite{AIR} and in abelian categories (\cite{AST} and \cite{Liu and Zhou}) to exact categories. Our first main theorem generalizes the bijection between support $\tau$-tilting subcategories and $\tau$-cotorsion torsion triples in abelian categories (cf. \cite[Theorem 5.7]{AST}) as follows. 
\begin{thm}(Theorem \ref{bijections})
	Assume $\mathcal{E}$ is weakly idempotent complete. Then there are mutually inverse bijections:
	\begin{align*}
		\{support~\tau\text{-tilting subcategories}\} & \leftrightarrow\{\tau\text{-cotorsion pair}~(\mathcal{C},\mathcal{D})~|~\mathcal{D}~\text{is a torsion class}\}\\
		\mathcal{T}& \mapsto (^{\bot_{1}}\mathsf{Fac}\mathcal{T},\mathsf{Fac}\mathcal{T})\\
		\mathcal{C} \cap \mathcal{D}& \mapsfrom (\mathcal{C},\mathcal{D}).
	\end{align*}
    Moreover the bijections restrict to bijections $$\{\text{tilting subcategories}\} \leftrightarrow \{\text{cotorsion pairs}~(\mathcal{C},\mathcal{D})~|~\mathcal{D}~\text{is a torsion class}\}.$$
\end{thm}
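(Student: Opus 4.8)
The plan is to show that both assignments land in the claimed targets and that they compose to the identity in each direction; the restriction to the tilting case will then follow by tracking one extra closure condition through the bijection.

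First I would check that the forward map is well defined. Given a support $\tau$-tilting subcategory $\mathcal{T}$, condition (1) of the definition gives $\mathcal{T}\subseteq {}^{\bot_{1}}\mathsf{Fac}\mathcal{T}$, and the elementary properties developed earlier should show that $\mathsf{Fac}\mathcal{T}$ is a torsion class and that $({}^{\bot_{1}}\mathsf{Fac}\mathcal{T},\mathsf{Fac}\mathcal{T})$ is a $\tau$-cotorsion pair; condition (2), which produces a left $\mathcal{T}$-approximation of each projective, is precisely what is needed to verify the $\tau$-part of the cotorsion-pair axioms. Conversely, given a $\tau$-cotorsion pair $(\mathcal{C},\mathcal{D})$ with $\mathcal{D}$ a torsion class, I would verify that $\mathcal{T}:=\mathcal{C}\cap\mathcal{D}$ is support $\tau$-tilting: condition (1) follows because $\mathcal{C}={}^{\bot_{1}}\mathcal{D}$ and $\mathsf{Fac}\mathcal{T}\subseteq\mathcal{D}$, so ${\rm Ext}^{1}_{\mathcal{E}}(\mathcal{T},\mathsf{Fac}\mathcal{T})=0$, while condition (2) is read off from the defining approximation sequences of the $\tau$-cotorsion pair applied to the objects of $\mathcal{P}$.

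Next I would prove that the two round trips are identities. For $(\mathcal{C},\mathcal{D})\mapsto\mathcal{C}\cap\mathcal{D}\mapsto({}^{\bot_{1}}\mathsf{Fac}(\mathcal{C}\cap\mathcal{D}),\mathsf{Fac}(\mathcal{C}\cap\mathcal{D}))$, the content is the equalities $\mathsf{Fac}(\mathcal{C}\cap\mathcal{D})=\mathcal{D}$ and ${}^{\bot_{1}}\mathcal{D}=\mathcal{C}$: the inclusion $\mathsf{Fac}(\mathcal{C}\cap\mathcal{D})\subseteq\mathcal{D}$ is immediate since $\mathcal{D}$ is closed under quotients, and the reverse inclusion is exactly the assertion that $\mathcal{C}\cap\mathcal{D}$ generates $\mathcal{D}$ under $\mathsf{Fac}$, which is built into the $\tau$-cotorsion pair axioms. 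For $\mathcal{T}\mapsto({}^{\bot_{1}}\mathsf{Fac}\mathcal{T},\mathsf{Fac}\mathcal{T})\mapsto {}^{\bot_{1}}\mathsf{Fac}\mathcal{T}\cap\mathsf{Fac}\mathcal{T}$, the heart of the theorem is the identity
$$\mathcal{T}={}^{\bot_{1}}\mathsf{Fac}\mathcal{T}\cap\mathsf{Fac}\mathcal{T}.$$
The inclusion $\subseteq$ is condition (1) together with $\mathcal{T}\subseteq\mathsf{Fac}\mathcal{T}$. For $\supseteq$, I would take $X$ in both sides, choose a deflation $T^{0}\twoheadrightarrow X$ with $T^{0}\in\mathcal{T}$ and kernel $K$, and argue that $K\in\mathsf{Fac}\mathcal{T}$; then $X\in {}^{\bot_{1}}\mathsf{Fac}\mathcal{T}$ forces ${\rm Ext}^{1}_{\mathcal{E}}(X,K)=0$, so the conflation $K\rightarrowtail T^{0}\twoheadrightarrow X$ splits. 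Since $\mathcal{E}$ is weakly idempotent complete, the splitting exhibits $X$ as a genuine direct summand of $T^{0}$, whence $X\in\mathcal{T}$ because $\mathcal{T}$ is additively closed.

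I expect the main obstacle to be this last inclusion, and specifically the step $K\in\mathsf{Fac}\mathcal{T}$: one must promote the given presentation of $X$ to one whose kernel still lies in the torsion class, using the closure properties of $\mathsf{Fac}\mathcal{T}$ and the approximation sequences supplied by condition (2). Weak idempotent completeness is exactly the hypothesis that turns the resulting split conflation into a direct-summand decomposition, so it cannot be dropped here. Finally, the restriction to tilting subcategories and cotorsion pairs should follow by carrying one additional condition through the established bijection: a support $\tau$-tilting subcategory is tilting precisely when $\mathcal{P}\subseteq\mathsf{Fac}\mathcal{T}$ (equivalently, the map $f$ in condition (2) is an inflation), and on the other side this corresponds to requiring $\mathcal{P}\subseteq\mathcal{D}$, i.e. that the $\tau$-cotorsion pair be an honest cotorsion pair.
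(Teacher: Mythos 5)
Your treatment of the main bijection follows the paper's route: both assignments are checked to land where claimed, one round trip reduces to $\mathcal{D}=\mathsf{Fac}(\mathcal{C}\cap\mathcal{D})$, and the other to $\mathcal{T}={}^{\bot_{1}}\mathsf{Fac}\mathcal{T}\cap\mathsf{Fac}\mathcal{T}$, whose nontrivial inclusion is exactly the statement that $\mathsf{Fac}\mathcal{T}$ has enough projectives $\mathcal{T}$ (Lemma~\ref{enough proj T}); the paper proves that lemma by splicing the approximation sequence of a projective cover of the kernel into the given presentation via two pushouts, which is the step you correctly flag as the main obstacle. Two corrections on where the hypotheses enter: the inclusion $\mathcal{D}\subseteq\mathsf{Fac}(\mathcal{C}\cap\mathcal{D})$ is not ``built into the axioms'' --- one factors a deflation $P\twoheadrightarrow X$ through the left $\mathcal{D}$-approximation $P\to D$ and invokes Heller's cancellation (Lemma~\ref{deflation}) to see that $D\to X$ is a deflation. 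That, together with Lemma~\ref{enough proj T}, is where weak idempotent completeness is actually used; a split conflation already gives a direct-sum decomposition in any additive category, so that is not its role.

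The genuine gap is the final paragraph. Both of your proposed criteria are false: a support $\tau$-tilting subcategory is tilting iff the approximation $f$ in Definition~\ref{def}(2) can be chosen to be an \emph{inflation} (Lemma~\ref{criterion}), which amounts to embedding the projectives into $\mathcal{T}$, i.e.\ roughly $\mathcal{P}\subseteq\mathsf{Sub}\mathcal{T}$, not $\mathcal{P}\subseteq\mathsf{Fac}\mathcal{T}$; and $\mathcal{P}\subseteq\mathcal{D}$ is not equivalent to $(\mathcal{C},\mathcal{D})$ being a cotorsion pair. For the hereditary algebra of type $A_{2}$ and the tilting module $T=P_{1}\oplus S_{1}$, the simple projective $P_{2}$ lies outside $\mathsf{Fac}T$, yet $({}^{\bot_{1}}\mathsf{Fac}T,\mathsf{Fac}T)$ is a cotorsion pair. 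Moreover, even with the correct criterion in hand, the restricted bijection requires real work that your sketch omits: to see that a tilting $\mathcal{T}$ yields a complete cotorsion pair one must produce, for an arbitrary $X$, both conflations $X\rightarrowtail D\twoheadrightarrow T^{1}$ and $D'\rightarrowtail C\twoheadrightarrow X$ by pushing out the conflation $P\rightarrowtail T^{0}\twoheadrightarrow T^{1}$ along a deflation $P\twoheadrightarrow X$; and conversely one needs $\mathcal{P}\subseteq\mathcal{C}$ and extension-closedness of $\mathcal{C}={}^{\bot_{1}}\mathcal{D}$ to see that the conflation $P\rightarrowtail D'\twoheadrightarrow C'$ supplied by the cotorsion pair has $D'\in\mathcal{C}\cap\mathcal{D}$ and is a left $\mathcal{D}$-approximation, whence Lemma~\ref{criterion} applies. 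None of this follows from carrying the (incorrect) condition $\mathcal{P}\subseteq\mathcal{D}$ through the bijection.
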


In Section 4, given a support $\tau$-tilting subcategory $\mathcal{T}$, we construct a subcategory $\mathcal{E}_{\mathcal{T}}\subseteq \mathcal{E}$ using the embedding $\mathcal{E}\rightarrow {\rm mod}_{\infty}-\mathcal{P}$. It is an exact category. Then we prove the first main result of this section. 
\begin{thm}(Theorem \ref{thm2})
	Assume $\mathcal{E}$ is skeletally small and weakly idempotent complete, then $\mathcal{T}$ is a tilting subcategory of $\mathcal{E}_{\mathcal{T}}$.
\end{thm}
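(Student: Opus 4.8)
\emph{Strategy.} Writing out the tilting conditions of \cite{Sauter} for $\mathcal{T}$ inside $\mathcal{E}_{\mathcal{T}}$, I must verify three things: that $\mathcal{T}$ is rigid, ${\rm Ext}^{1}_{\mathcal{E}_{\mathcal{T}}}(\mathcal{T},\mathcal{T})=0$; that every $T\in\mathcal{T}$ has ${\rm pd}_{\mathcal{E}_{\mathcal{T}}}T\le 1$; and that every projective object of $\mathcal{E}_{\mathcal{T}}$ fits into a \emph{conflation} $0\to P'\to T^{0}\to T^{1}\to 0$ with $T^{0},T^{1}\in\mathcal{T}$. The whole difficulty is concentrated in the last point: Definition \ref{def}(2) only provides a left $\mathcal{T}$-approximation $f\colon P\to T^{0}$, which need not be an inflation of $\mathcal{E}$, and the role of $\mathcal{E}_{\mathcal{T}}$ is precisely to convert it into one. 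I would carry out all the homological comparisons inside the abelian category ${\rm mod}_{\infty}-\mathcal{P}$, exploiting that $\mathcal{E}\hookrightarrow {\rm mod}_{\infty}-\mathcal{P}$ is an exact, fully faithful embedding realising $\mathcal{E}$ as an extension-closed subcategory, so that conflations and ${\rm Ext}^{1}$-groups of $\mathcal{E}$ and of $\mathcal{E}_{\mathcal{T}}$ can be read off there.

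I would first dispose of rigidity, which is formal. A conflation of $\mathcal{E}_{\mathcal{T}}$ between objects of $\mathcal{T}$ has all terms in $\mathcal{E}_{\mathcal{T}}\subseteq\mathcal{E}$, hence is a conflation of $\mathcal{E}$; as $\mathcal{E}_{\mathcal{T}}$ is a full subcategory this gives an injection ${\rm Ext}^{1}_{\mathcal{E}_{\mathcal{T}}}(\mathcal{T},\mathcal{T})\hookrightarrow {\rm Ext}^{1}_{\mathcal{E}}(\mathcal{T},\mathcal{T})$. Since $\mathcal{T}\subseteq\mathsf{Fac}\mathcal{T}$, axiom (1) of Definition \ref{def} yields ${\rm Ext}^{1}_{\mathcal{E}}(\mathcal{T},\mathcal{T})=0$, so the left-hand group vanishes as well.

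Next I would pin down the projectives of $\mathcal{E}_{\mathcal{T}}$ and deduce the coresolution. For each $P\in\mathcal{P}$ set $P':={\rm ker}(T^{0}\twoheadrightarrow T^{1})={\rm im}(f)$, computed in ${\rm mod}_{\infty}-\mathcal{P}$; by construction $0\to P'\to T^{0}\to T^{1}\to 0$ is short exact there, and since for a projective $Q$ outside the support of $\mathcal{T}$ one has ${\rm Hom}(Q,P')\hookrightarrow{\rm Hom}(Q,T^{0})=0$, the object $P'$ lies in $\mathcal{E}_{\mathcal{T}}$, so this sequence is a conflation of $\mathcal{E}_{\mathcal{T}}$. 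The plan is to show that ${\rm add}\{P'\mid P\in\mathcal{P}\}$ is exactly the class of projectives of $\mathcal{E}_{\mathcal{T}}$ and that $\mathcal{E}_{\mathcal{T}}$ has enough of them; granting this, the displayed conflation is already the required length-one $\mathcal{T}$-coresolution of the projective $P'$, and ${\rm pd}_{\mathcal{E}_{\mathcal{T}}}T\le 1$ for $T\in\mathcal{T}$ follows by resolving $T$ by these projectives and using rigidity together with the coresolution to see that the first syzygy is again projective.

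The crux — and the step I expect to be genuinely hard — is proving that $P'$ is projective in $\mathcal{E}_{\mathcal{T}}$, for this projectivity is created by passing to $\mathcal{E}_{\mathcal{T}}$ and is \emph{not} inherited from $\mathcal{E}$: indeed $P'$ is a proper quotient of the $\mathcal{E}$-projective $P$ and is typically not projective in $\mathcal{E}$, the point being that a conflation of $\mathcal{E}_{\mathcal{T}}$ must keep its middle term inside $\mathcal{E}_{\mathcal{T}}$, which discards the extensions responsible for ${\rm Ext}^{1}_{\mathcal{E}}(P',-)$. I expect to establish it from the explicit description of $\mathcal{E}_{\mathcal{T}}$ and its projectives: given a deflation $E\twoheadrightarrow P'$ in $\mathcal{E}_{\mathcal{T}}$, one lifts the canonical epimorphism $P\twoheadrightarrow P'$ along it using the $\mathcal{E}$-projectivity of $P$, and then invokes the defining vanishing condition cutting out $\mathcal{E}_{\mathcal{T}}$ (no morphisms from the projectives outside the support of $\mathcal{T}$) to promote this lift to a splitting, weak idempotent completeness ensuring that the relevant kernels and images are admissible. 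The same circle of ideas — images of approximations and kernels of deflations existing in ${\rm mod}_{\infty}-\mathcal{P}$ and landing in $\mathcal{E}_{\mathcal{T}}$ — is what yields enough projectives and their exhaustion. Once projectivity is secured, the three tilting conditions assemble formally and the theorem follows.
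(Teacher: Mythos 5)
Your overall architecture matches the paper's: you identify the projectives of $\mathcal{E}_{\mathcal{T}}$ with $\mathsf{add}$ of the images $K={\rm Im}f$ of the admissible left $\mathcal{T}$-approximations $f\colon P\to T^{0}$, you prove their projectivity via the isomorphism $(-,K)|_{\mathcal{P}}\cong\overline{\mathcal{P}}(-,P)$ in ${\rm Mod}-\overline{\mathcal{P}}$ (your lifting-and-splitting argument is the same computation as the splitting of $\mathbb{P}(M\rightarrowtail N\twoheadrightarrow K)$ in Lemma \ref{find subcategory}), and you read off the conflation $K\rightarrowtail T^{0}\twoheadrightarrow T^{1}$. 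But there are two genuine gaps. The first concerns coresolutions of \emph{arbitrary} projectives of $\mathcal{E}_{\mathcal{T}}$: Sauter's criterion requires every object of $\mathcal{P}_{\mathcal{T}}=\mathsf{add}\{K\}$, hence every direct summand $Q'$ of such a $K$, to admit a conflation $Q'\rightarrowtail T'^{0}\twoheadrightarrow T^{1}$ with both terms in $\mathcal{T}$, and this is not formally inherited from the coresolution of $K$. Pushing out $K\rightarrowtail T^{0}\twoheadrightarrow T^{1}$ along the retraction $K\twoheadrightarrow Q'$ produces a middle term $T'^{0}$ that a priori lies only in $\mathsf{Fac}\mathcal{T}$; to conclude $T'^{0}\in\mathcal{T}$ one shows ${\rm Ext}^{1}_{\mathcal{E}_{\mathcal{T}}}(T'^{0},\mathsf{Fac}\mathcal{T})=0$ (using projectivity of $Q'$ and rigidity of $T^{1}$) and then invokes $\mathcal{T}={\rm P}(\mathsf{Fac}\mathcal{T})$, i.e.\ Corollary \ref{T=Ext-proj}. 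This is precisely where the full support $\tau$-tilting hypothesis enters, as opposed to ``$\tau$-rigid satisfying (A)'', which only yields a \emph{partial} tilting subcategory; your proposal never uses this distinction and therefore cannot close this step.

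The second gap is the claim that ${\rm pd}_{\mathcal{E}_{\mathcal{T}}}\mathcal{T}\le 1$ follows because ``the first syzygy is again projective'': that is a restatement of the conclusion, not an argument, and the kernel of the deflation $K\twoheadrightarrow T$ has no evident reason to lie in $\mathcal{P}_{\mathcal{T}}$. The paper instead reruns the proof of Lemma \ref{criterion} inside $\mathcal{E}_{\mathcal{T}}$: given a $2$-extension of $T\in\mathcal{T}$ by $X$, one replaces its right-hand kernel term by a deflation from some $K\in\mathcal{P}_{\mathcal{T}}$, pushes out along $K\rightarrowtail T^{0}$, and uses that the resulting middle object lies in $\mathsf{Fac}\mathcal{T}$, where ${\rm Ext}^{1}(T,-)$ vanishes, to show the class in ${\rm Ext}^{2}$ is zero. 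Some argument of this kind is needed and your sketch does not supply one. A minor further imprecision: membership in $\mathcal{A}_{\mathcal{T}}$ means vanishing of ${\rm Hom}_{\mathcal{E}}(u,-)$ for all morphisms $u$ in the ideal $\mathcal{I}=\{u\in\mathcal{P}\mid\mathcal{E}(u,\mathcal{T})=0\}$, not only for projectives ``outside the support''; your verification that ${\rm Im}f\in\mathcal{A}_{\mathcal{T}}$ via the inflation into $T^{0}$ does go through for the correct condition, so this point is cosmetic.
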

If $\mathcal{E}={\rm mod}-\Lambda$ and $\mathcal{T}=\mathsf{add}T$ for a support $\tau$-tilting module $T$, then the above theorem is known in \cite{AIR} and \cite{Treffinger2}. If moreover $\mathcal{E}$ is Krull-Schmidt, we prove another main result which is well-known in ${\rm mod}-\Lambda$ (see \cite{AIR}) as follows. 
\begin{thm}(Theorem \ref{thm:number component})
	$|\mathcal{T}|$ equals to the number of isomorphism classes of indecomposable projectives $Q$ such that ${\rm Hom}_{\mathcal{E}}(Q,\mathcal{T})\neq 0$.
\end{thm}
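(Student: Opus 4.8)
The plan is to reduce the statement to a cardinality count for \emph{tilting} subcategories and then identify the relevant projectives. By Theorem \ref{thm2}, $\mathcal{T}$ is a tilting subcategory of the exact category $\mathcal{E}_{\mathcal{T}}$, so I would first prove that $|\mathcal{T}|$ equals the number of isomorphism classes of indecomposable projectives of $\mathcal{E}_{\mathcal{T}}$, and then show that these indecomposable projectives are precisely the indecomposable projectives $Q$ of $\mathcal{E}$ with ${\rm Hom}_{\mathcal{E}}(Q,\mathcal{T})\neq 0$. Since $\mathcal{E}$ is Krull-Schmidt, so is $\mathcal{E}_{\mathcal{T}}$, every object decomposes essentially uniquely into indecomposables, and throughout $|\mathcal{T}|$ denotes the number of isomorphism classes of indecomposable objects of $\mathcal{T}$.

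For the identification of the projectives, I would argue as follows. If $Q$ is an indecomposable projective of $\mathcal{E}$ with ${\rm Hom}_{\mathcal{E}}(Q,\mathcal{T})=0$, then in the defining sequence $Q\xrightarrow{f}T^{0}\twoheadrightarrow T^{1}$ of Definition \ref{def}(2) the left $\mathcal{T}$-approximation $f$ must vanish; choosing $f$ left minimal forces $T^{0}=0$ and hence $T^{1}=0$, so $Q$ lies outside the ``supported'' part and does not survive as a projective of $\mathcal{E}_{\mathcal{T}}$. Conversely, the construction of $\mathcal{E}_{\mathcal{T}}$ via the embedding $\mathcal{E}\to {\rm mod}_{\infty}-\mathcal{P}$ should show that exactly those indecomposable projectives $Q$ with ${\rm Hom}_{\mathcal{E}}(Q,\mathcal{T})\neq 0$ remain projective in $\mathcal{E}_{\mathcal{T}}$. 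The Krull-Schmidt hypothesis is what lets me pass freely between $\mathsf{add}$-closures and counts of isomorphism classes of indecomposables.

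The heart of the argument is the cardinality equality for a tilting subcategory $\mathcal{T}$ of a Krull-Schmidt exact category $\mathcal{E}_{\mathcal{T}}$ with enough projectives: $|\mathcal{T}|$ equals the number of indecomposable projectives of $\mathcal{E}_{\mathcal{T}}$. I would prove this by exhibiting mutually inverse isomorphisms between the free abelian group on the indecomposables of $\mathcal{T}$ and the free abelian group on the indecomposable projectives of $\mathcal{E}_{\mathcal{T}}$. The tilting structure supplies, for each projective $P'$, a finite $\mathcal{T}$-coresolution, giving a map $[P']\mapsto\sum_{i}(-1)^{i}[T^{i}]$; finite projective dimension of tilting objects supplies, for each $T\in\mathcal{T}$, a finite resolution by projectives, giving a map $[T]\mapsto\sum_{j}(-1)^{j}[P_{j}]$. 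Using the horseshoe lemma and the comparison of resolutions inside the (weakly idempotent complete) exact category one checks these maps are well defined, and the rigidity ${\rm Ext}^{i}_{\mathcal{E}_{\mathcal{T}}}(\mathcal{T},\mathcal{T})=0$ for $i\geq 1$ together with the coresolution property shows they are mutually inverse. An isomorphism of free abelian groups forces their bases to have the same cardinality, which yields the equality $|\mathcal{T}|=|{\rm proj}\,\mathcal{E}_{\mathcal{T}}|$, and combining with the previous paragraph finishes the proof.

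The main obstacle is this crux step: showing that the alternating-sum maps are well defined and mutually inverse. Well-definedness requires that $\sum_{i}(-1)^{i}[T^{i}]$ be independent of the chosen $\mathcal{T}$-coresolution, which I would obtain from a comparison/horseshoe argument valid in an arbitrary weakly idempotent complete exact category rather than an abelian one; the inverse relation requires splicing a $\mathcal{T}$-coresolution of a projective with projective resolutions of each $T^{i}$ and collapsing the result back to $[P']$, where the vanishing of higher $\mathcal{T}$-self-extensions is used repeatedly. A secondary technical point is confirming, from the construction of $\mathcal{E}_{\mathcal{T}}$ inside ${\rm mod}_{\infty}-\mathcal{P}$, that its indecomposable projectives are exactly the $Q$ with ${\rm Hom}_{\mathcal{E}}(Q,\mathcal{T})\neq 0$; I expect this to follow from how $\mathcal{E}_{\mathcal{T}}$ was cut out, together with Definition \ref{def}(2).
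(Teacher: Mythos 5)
Your overall strategy matches the paper's: reduce via Theorem \ref{thm2} to the claim that a tilting subcategory of the Krull--Schmidt exact category $\mathcal{E}_{\mathcal{T}}$ has as many indecomposables as $\mathcal{E}_{\mathcal{T}}$ has indecomposable projectives, then match those projectives against the indecomposable $Q\in\mathcal{P}$ with ${\rm Hom}_{\mathcal{E}}(Q,\mathcal{T})\neq 0$. For the counting step the paper invokes Proposition \ref{lem:number components}, proved via the triangle equivalence $K^{b}(\mathcal{T})\simeq D^{b}(\mathcal{P}^{<\infty})$ and split Grothendieck groups (applied to the $0$-tilting subcategory $\mathcal{P}_{\mathcal{T}}$ and the $1$-tilting subcategory $\mathcal{T}$ of $\mathcal{E}_{\mathcal{T}}$); your hands-on alternating-sum argument is essentially the same $K_{0}$ computation, with the well-definedness issue you correctly flag being exactly what the triangulated formalism absorbs for free. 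That part of your plan is sound, if more laborious.

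The genuine gap is in what you dismiss as the secondary point. The indecomposable projectives of $\mathcal{E}_{\mathcal{T}}$ are \emph{not} the projectives $Q$ of $\mathcal{E}$ with ${\rm Hom}_{\mathcal{E}}(Q,\mathcal{T})\neq 0$, and such a $Q$ in general does not even lie in $\mathcal{A}_{\mathcal{T}}=\{X\mid\mathcal{E}(\mathcal{I},X)=0\}$, so it cannot ``remain projective in $\mathcal{E}_{\mathcal{T}}$'' (in the module case $\mathcal{P}_{\mathcal{T}}=\mathsf{add}(\Lambda/{\rm ann}T)$, and $\Lambda\notin{\rm mod}-\Lambda/{\rm ann}T$ unless ${\rm ann}T=0$). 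By Lemma \ref{find subcategory} the projectives of $\mathcal{E}_{\mathcal{T}}$ are the summands of the images $K={\rm Im}f$ of admissible left $\mathcal{T}$-approximations $f\colon Q\to T$, so what you actually need is that $Q\mapsto{\rm Im}f_{Q}$ induces a bijection on isomorphism classes of indecomposables between $\mathcal{Q}$ and $\mathcal{P}_{\mathcal{T}}$. Establishing this is the real content of the paper's proof: Lemma \ref{lem:KS-proj} gives the equivalence $\overline{\rho}\colon\overline{\mathcal{P}}\to\mathcal{P}_{\mathcal{T}}$, using that ${\rm End}_{\mathcal{E}}(K)\cong{\rm End}_{\mathcal{E}}(Q)/\mathcal{I}(Q,Q)$ is local precisely when ${\rm Hom}_{\mathcal{E}}(Q,\mathcal{T})\neq 0$, and the first half of the proof of Theorem \ref{thm:number component} then shows that two indecomposables $Q_{1},Q_{2}\in\mathcal{Q}$ that become isomorphic in $\overline{\mathcal{P}}$ are already isomorphic in $\mathcal{P}$, because $1-gf\in{\rm rad}\,{\rm End}_{\mathcal{E}}(Q_{1})$ forces $gf$ (and likewise $fg$) to be invertible. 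None of this ``follows from how $\mathcal{E}_{\mathcal{T}}$ was cut out''; it is a separate argument that your outline omits and that your literal claim (identity of the two sets of projectives, rather than a bijection) would not survive.
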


In Section 5, we give a Brenner-Butler-type theorem in functor categories (see Theorem \ref{thm4}). It is a functorial version of \cite[Proposition 3.5]{G.Jasso} and partially generalizes \cite[Theorem 6.13]{Sauter}.

In Section 6, we provide a simple example to illustrate some of our results.

\section{Preliminaries}

In this paper, all categories are additive and subcategories are always assumed to be full and closed under isomorphisms. Let $\mathcal{A}$ be an additive category. A subcategory $\mathcal{C}$ of $\mathcal{A}$ is called {\em additively closed} if it is closed under taking direct summands of finite direct sums. The composition of $f:X\rightarrow Y$ and $g:Y\rightarrow Z$ is $gf$.
A morphism $f:X\rightarrow Y$ is {\em right minimal} if any morphism $h:X\rightarrow X$ satisfies $fh=f$ is an isomorphism. For a subcategory $\mathcal{X}$ of $\mathcal{A}$, $f$ is a {\em right $\mathcal{X}$-approximation} if $X\in \mathcal{X}$ and ${\rm Hom}_{\mathcal{A}}(X',f)$ is surjective for any $X'\in \mathcal{X}$.  $f$ is called a {\em minimal right $\mathcal{X}$-approximation} if $f$ is both right minimal and a right approximation. Dually we define (minimal) left approximations. 

An additive category $\mathcal{A}$ is called {\em weakly idempotent complete} if every section has a cokernel (equivalently, every retraction has a kernel, cf. \cite[Lemma 7.1]{T.Buhler}). It is called {\em idempotent complete} (or has split idempotents) if for every idempotent $e\in {\rm End}_{\mathcal{A}}(X)$, there is $u:X\rightarrow Y$ and $v:Y\rightarrow X$ such that $vu=e,uv={\rm id}_{Y}$. It is called {\em Krull-Schmidt} if every object decomposes into a finite direct sum of objects having local endomorphism rings.

For a unital ring $A$, we denote by ${\rm Mod}-A$ the category of right $A$-modules, and ${\rm mod}-A$ the subcategory of finitely generated modules.

\subsection{Exact categories}

Let $\mathcal{E}=(\mathcal{A},\mathcal{S})$ be an exact category, where $\mathcal{A}$ is an additive category and $\mathcal{S}$ is a class of kernel-cokernel pairs which satisfies axioms of \cite[Definition 2.1]{T.Buhler}. We call an element $X\stackrel{f}\rightarrow Y\stackrel{g}\rightarrow Z$ in $\mathcal{S}$ a conflation, $f$ an inflation and $g$ a deflation. We depict inflations (resp. deflations) by $\rightarrowtail$ (resp. $\twoheadrightarrow$). A morphism $f$ in $\mathcal{E}$ is called {\em admissible} if $f=i\circ d$ for some inflation $i$ and deflation $d$. A sequence of admissible morphisms $X\stackrel{f} \rightarrow Y\stackrel{g} \rightarrow Z$ in $\mathcal{E}$ is {\em exact} at $Y$ if Im$f$=Ker$g$. A sequence of composable morphisms is exact if every morphism is admissible and the sequence is exact at every intermediate object.

For a class of objects $\mathcal{C}$, denote by $\mathsf{add}\mathcal{C}$ the smallest additively closed subcategory containing $\mathcal{C}$. For an additive subcategory $\mathcal{T}$, define
$$\mathsf{Fac}\mathcal{T}=\{X\in \mathcal{E}~|~\exists~T\twoheadrightarrow X,~T\in \mathcal{T}\}~\text{and}~\mathsf{Sub}\mathcal{T}=\{X\in \mathcal{E}~|~\exists~X\rightarrowtail T,~T\in \mathcal{T}\}.$$
We say $\mathcal{T}$ is {\em factor closed} (resp. {\em subobject closed}) if $\mathsf{Fac}\mathcal{T}\subseteq \mathcal{T}$ (resp. $\mathsf{Sub}\mathcal{T}\subseteq \mathcal{T}$).

An object $P\in \mathcal{E}$ is called {\em projective} if the functor Hom$(P,-)$ : $\mathcal{E}\longrightarrow $Ab takes conflations to conflations. We say $\mathcal{E}$ has enough projectives if for every object $X$ there is a deflation $P\twoheadrightarrow X$ with $P$ projective. Dually, we have {\em injective} objects.

As in abelian case, there are abelian groups ${\rm Ext}^{n}_{\mathcal{E}}(X,Y)$ for $n\geq 0$ and $X,Y\in \mathcal{E}$. They can be defined by using projective (or injective) resolutions, or as abelian groups of $n$-extensions in the sense of Yoneda or Hom spaces in derived categories. We refer to \cite{H.Krause} for details. For a subcategory $\mathcal{X}$, we define
$$\mathcal{X}^{\bot_{n}}=\{Y\in \mathcal{E}~|~{\rm Ext}^{n}_{\mathcal{E}}(X,Y)=0~\forall X\in \mathcal{X}\}.$$
Clearly we have $\mathcal{X}^{\bot_{0}}=\{Y\in \mathcal{E}~|~{\rm Hom}_{\mathcal{E}}(X,Y)=0~\forall X\in \mathcal{X}\}$. Dually we can define $^{\bot_{n}}\mathcal{X}$. For an object $X\in \mathcal{E}$, we say ${\rm pd}_{\mathcal{E}}X\leq n$ if ${\rm Ext}_{\mathcal{E}}^{n+1}(X,-)=0$.

A subcategory $\mathcal{C}$ of $\mathcal{E}$ is called {\em extension closed} if for any conflation $X\rightarrowtail Y\twoheadrightarrow Z$ with $X,Z\in \mathcal{C}$, then $Y\in \mathcal{C}$. It is called a {\em thick} subcategory if it is closed under extensions, kernels of deflations, cokernels of inflations and summands. We denote by ${\rm Thick}(\mathcal{C})$ the smallest thick subcategory containing $\mathcal{C}$.

A {\em full exact subcategory} $\mathcal{C}$ of $\mathcal{E}$ is a full extension closed additive subcategory. In this case, $(\mathcal{C},\mathcal{S}|_{\mathcal{C}})$ is an exact category. Note that the subcategory ${\rm P}(\mathcal{C})$ consisting of Ext-projective objects in $\mathcal{C}$ (i.e. $X\in \mathcal{C}$ such that ${\rm Ext}_{\mathcal{E}}^{1}(X,\mathcal{C})=0$) is precisely the subcategory of projective objects in exact category $\mathcal{C}$.

Exact category $\mathcal{E}=(\mathcal{A},\mathcal{S})$ is weakly idempotent complete (resp. idempotent complete, Krull-Schmidt) if the underlying additive category $\mathcal{A}$ is weakly idempotent complete (resp. idempotent complete, Krull-Schmidt). When $\mathcal{E}$ is weakly idempotent complete, there is a Heller's cancellation axiom as follows.

\begin{lem}\cite[Proposition 7.6]{T.Buhler}\label{deflation}
	Let $\mathcal{E}=(\mathcal{A},\mathcal{S})$ be an exact category. The following are equivalent:
	\begin{enumerate}
		\item The additive category $\mathcal{A}$ is weakly idempotent complete.
		\item Consider two morphisms $g:B\rightarrow C$ and $f:A\rightarrow B$. If $gf:A\twoheadrightarrow C$ is a deflation then $g$ is a deflation.
	\end{enumerate}
\end{lem}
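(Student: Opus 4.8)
The plan is to prove the two implications separately, since the content lies entirely in $(1)\Rightarrow(2)$. Throughout I will use freely the standard facts that isomorphisms are deflations, that a composite of deflations is a deflation, that deflations are stable under pullback (and that such pullbacks exist), and that every deflation, occurring in a conflation, has a kernel. The decisive extra ingredient is the \emph{obscure axiom} in its dual form, valid in every exact category with no completeness hypothesis: if a morphism $g\colon B\to C$ admits a kernel and there is some $f\colon A\to B$ with $gf$ a deflation, then $g$ is itself a deflation. Consequently the only thing weak idempotent completeness has to supply in $(1)\Rightarrow(2)$ is the mere \emph{existence} of a kernel of $g$.

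For $(2)\Rightarrow(1)$ I would show that every retraction has a kernel, which is one of the equivalent formulations of weak idempotent completeness. Let $r\colon Y\to X$ be a retraction with section $s\colon X\to Y$, so that $rs={\rm id}_X$. Since ${\rm id}_X$ is an isomorphism it is a deflation, hence $rs$ is a deflation; applying hypothesis $(2)$ with $g=r$ and $f=s$ shows that $r$ is a deflation. Being a deflation, $r$ sits in a conflation and in particular has a kernel. Thus every retraction has a kernel and $\mathcal{A}$ is weakly idempotent complete.

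For $(1)\Rightarrow(2)$, assume $gf\colon A\twoheadrightarrow C$ is a deflation. First I would form the pullback $P$ of the deflation $gf$ along $g$, with structure maps $p_1\colon P\to A$ and $p_2\colon P\to B$ satisfying $gf\circ p_1=g\circ p_2$. Since deflations are stable under pullback, $p_2\colon P\twoheadrightarrow B$ is a deflation. The pair $({\rm id}_A,f)$ defines a morphism $\iota\colon A\to P$, because $gf\circ{\rm id}_A=g\circ f$, with $p_1\iota={\rm id}_A$ and $p_2\iota=f$; in particular $p_1$ is a retraction. By weak idempotent completeness the retraction $p_1$ has a kernel $m\colon P'\rightarrowtail P$, and I then claim $p_2 m\colon P'\to B$ is a kernel of $g$. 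Indeed $g\,p_2 m=gf\,p_1 m=0$ because $p_1 m=0$. For the universal property, given $t\colon X\to B$ with $gt=0$ the pair $(0,t)$ defines a morphism $s\colon X\to P$ (as $gf\circ 0=g\circ t$) with $p_1 s=0$ and $p_2 s=t$; since $m=\ker p_1$, the map $s$ factors uniquely as $s=m\bar t$, whence $p_2 m\bar t=t$, the uniqueness of $\bar t$ following from that of the pullback together with the monicity of $m$. Thus $g$ admits a kernel, and the obscure axiom applied to $g$ and $f$ yields that $g$ is a deflation.

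The crux, and the single point at which weak idempotent completeness is used, is the passage from the retraction $p_1$ to a genuine kernel $P'$: without it $\ker p_1$ need not exist and the construction collapses. The remaining delicate step is to check that transporting $\ker p_1$ through $p_2$ produces the actual kernel of $g$, and not merely a morphism annihilated by $g$; this is exactly where the universal property of the pullback does the work. I expect the verification of this universal property, followed by the appeal to the obscure axiom to upgrade ``$g$ has a kernel'' to ``$g$ is a deflation'', to be the part requiring the most care, the rest being formal manipulation of (co)universal properties.
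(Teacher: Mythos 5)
Your argument is correct, and the paper itself offers no proof of this lemma — it is quoted directly from B\"uhler's \emph{Exact categories} (Proposition 7.6), whose proof of the nontrivial implication proceeds exactly as you do: pull back the deflation $gf$ along $g$, observe that the projection to $A$ is a retraction and hence has a kernel by weak idempotent completeness, transport that kernel along the other projection to obtain a kernel of $g$, and invoke the (redundant) obscure axiom, i.e.\ the dual of B\"uhler's Proposition 2.16, to conclude that $g$ is a deflation. Your easy direction via $rs=\mathrm{id}$ is also the standard one, so there is nothing to add.
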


If $\mathcal{E}$ is idempotent complete, then there is a useful lemma.

\begin{lem}\cite[Exercise 8.18]{T.Buhler}\label{admissible}
	Let $\mathcal{E}=(\mathcal{A},\mathcal{S})$ be an exact category. The following are equivalent:
	\begin{enumerate}
		\item The category $\mathcal{A}$ is idempotent complete.
		\item If the direct sum of two morphisms $a:A'\rightarrow A$ and $b:B'\rightarrow B$ is an admissible morphism, then so are $a$ and $b$.
	\end{enumerate}
\end{lem}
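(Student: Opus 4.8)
The plan is to route both implications through the elementary observation that an idempotent $e\colon X\to X$ splits if and only if it is an admissible morphism. Indeed, if $e=i\circ d$ with $d$ a deflation and $i$ an inflation, then $e^{2}=e$ together with the facts that $i$ is monic and $d$ is epic forces $d\circ i=\mathrm{id}$, so the pair $(d,i)$ splits $e$; conversely, a splitting of $e$ exhibits it as a split epimorphism followed by a split monomorphism, and both of these are admissible. With this dictionary in hand, ``$\mathcal{A}$ is idempotent complete'' becomes ``every idempotent is admissible,'' which is precisely the shape of condition (2), so each implication reduces to a statement about admissibility of concrete morphisms.

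For the implication (2)$\Rightarrow$(1), I would start from an arbitrary idempotent $e\colon X\to X$ and consider the two morphisms $a=e$ and $b=1-e$ from $X$ to $X$. The claim is that $a\oplus b=\begin{pmatrix} e&0\\ 0&1-e\end{pmatrix}\colon X\oplus X\to X\oplus X$ is admissible, after which hypothesis (2) immediately yields that $a=e$ is admissible, hence split by the observation above. To prove the claim, set $P=\begin{pmatrix} e&1-e\\ 1-e&e\end{pmatrix}$; a direct computation using $e^{2}=e$ gives $P^{2}=\mathrm{id}$, so $P$ is an isomorphism, and a second computation gives $P(a\oplus b)P=\begin{pmatrix}1&0\\ 0&0\end{pmatrix}$. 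The right-hand side factors as the canonical deflation $X\oplus X\twoheadrightarrow X$ followed by the canonical inflation $X\rightarrowtail X\oplus X$, hence is admissible, and conjugating back by the isomorphism $P$ preserves admissibility. This direction is essentially a bookkeeping trick and should present no real difficulty.

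For (1)$\Rightarrow$(2), assume $\mathcal{A}$ is idempotent complete and write the given admissible morphism as $a\oplus b=i\circ d$ with $d\colon A'\oplus B'\twoheadrightarrow I$ a deflation and $i\colon I\rightarrowtail A\oplus B$ an inflation. The block idempotents $e'=\begin{pmatrix}1&0\\0&0\end{pmatrix}$ on source and target commute with $a\oplus b$; I would check that $e'$ preserves $\ker d=\ker(a\oplus b)$ and therefore descends along the cokernel map $d$ to an idempotent $\bar e$ on $I$ satisfying $\bar e d=d e'$. Splitting $\bar e$ via idempotent completeness yields $I=I_{A}\oplus I_{B}$, and the commutation relations force both $d$ and $i$ to be block diagonal, so that $a=i_{A}\circ d_{A}$ and $b=i_{B}\circ d_{B}$ with $d_{A}\colon A'\to I_{A}$ and $i_{A}\colon I_{A}\to A$. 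It then remains to upgrade these summand maps to a genuine deflation and inflation: composing $d$ with the canonical deflation $I\twoheadrightarrow I_{A}$ shows that $d_{A}\circ\pi_{A'}$ is a deflation, whence Heller's cancellation (Lemma \ref{deflation}) gives that $d_{A}$ is a deflation, and dually $i_{A}$ is an inflation, so $a$ is admissible and, symmetrically, so is $b$.

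The main obstacle is exactly this last step: a direct summand of a deflation need not be a deflation in a general exact category, and the role of idempotent completeness is precisely to make the compatible splitting $I=I_{A}\oplus I_{B}$ available, so that $d_{A}$ becomes the second factor of an honest composite to which Heller's axiom applies. The delicate bookkeeping lies in constructing $\bar e$ on $I$ (verifying it is well defined and idempotent) and in confirming that $d$ and $i$ are block diagonal with respect to the induced decomposition; once these are secured, promoting $d_{A}$ and $i_{A}$ to a deflation and inflation, and carrying out the symmetric argument for $b$, are routine.
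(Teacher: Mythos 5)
The paper does not prove this lemma at all: it is quoted verbatim from B\"uhler as Exercise 8.18, so there is no in-paper argument to compare against. Your proposal is, as far as I can check, a correct solution to that exercise. The conjugation trick for (2)$\Rightarrow$(1) is exactly right: $P=\bigl(\begin{smallmatrix} e & 1-e\\ 1-e & e\end{smallmatrix}\bigr)$ is an involution conjugating $e\oplus(1-e)$ to $\bigl(\begin{smallmatrix}1&0\\0&0\end{smallmatrix}\bigr)$, which is admissible because the split sequence $X\rightarrowtail X\oplus X\twoheadrightarrow X$ is a conflation in any exact category; then (2) makes $e$ admissible and the cancellation $di=\mathrm{id}$ splits it. For (1)$\Rightarrow$(2), the descent of $e'$ to $\bar e$ on $I$ works (one sees $de'k=0$ directly from $ide'k=e'(a\oplus b)k=0$ and $i$ monic, and $d=\operatorname{coker}k$), the block-diagonality of $d$ and $i$ follows from $\bar ed=de'$ and $i\bar e=e'i$, and the final promotion of $d_A$ to a deflation via Heller's cancellation (and its dual for $i_A$) is legitimate since idempotent complete implies weakly idempotent complete. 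One imprecision worth fixing: the ``dictionary'' is not quite an equivalence in a general exact category. A split idempotent need not be admissible --- if $e=\sigma\rho$ with $\rho\sigma=\mathrm{id}$ but $\rho$ has no kernel, then $\rho$ is not a deflation and $e$ is not admissible; the correct statement is that $e$ is admissible iff \emph{both} $e$ and $1-e$ split. This does not damage your proof, because in (2)$\Rightarrow$(1) you only invoke the true direction (admissible implies split) and in (1)$\Rightarrow$(2) full idempotent completeness is assumed, but the framing sentence should be corrected.
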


Exact categories have many diagram properties such as Five Lemma \cite[Corollary 3.2]{T.Buhler}. For more, we refer to \cite{T.Buhler}.

\subsection{Functor categories}

Let $\mathcal{C}$ be an skeletally small additive category, define ${\rm Mod}-\mathcal{C}$ to be the functor category consisting of all ({\em right}) $\mathcal{C}$-{\em modules}, i.e., contravariant additive functors $F:\mathcal{C}^{\rm op}\longrightarrow Ab$. Then ${\rm Mod}-\mathcal{C}$ is an abelian category with enough projectives and injectives, and projectives are precisely direct summands of direct sums (possibly infinite) of representable functors. We denote by ${\rm mod}_{n}-\mathcal{C}$ ($n\in \mathbb{N}\cup \{\infty\}$) the subcategory of $\mathcal{C}$-modules $F$ which admits an exact sequence
$$\mathcal{C}(-,X_{n})\rightarrow \cdots \rightarrow \mathcal{C}(-,X_{1})\rightarrow \mathcal{C}(-,X_{0})\rightarrow F\rightarrow 0, ~X_i\in \mathcal{C}.$$
Then ${\rm mod}_{\infty}-\mathcal{C}$ is a thick subcategory of ${\rm Mod}-\mathcal{C}$ and has enough projectives which is the subcategory of all finitely generated projectives in ${\rm Mod}-\mathcal{C}$ (cf. \cite[Proposition 2.6]{H.Enomoto}). If furthermore, $\mathcal{C}$ is idempotent complete, then finitely generated projectives are precisely reprensentable functors (cf. \cite[Proposition 2.2]{Auslander}). Let $A$ be a ring, we have ${\rm Mod}-A$ and ${\rm mod}_{n}-A,~n\in \mathbb{N}\cup \{\infty\}$ similarily. ${\rm Mod}-A$ is just the category of right $A$-modules and ${\rm mod}_{0}-A$ (resp. ${\rm mod}_{1}-A$) is the subcategory of finitely generated (resp. presented) modules. Note that if $A$ is right noetherian, then ${\rm mod}_{n}-A={\rm mod}_{0}-A,~\forall n\in \mathbb{N}\cup \{\infty\}$.

\begin{lem}\label{infinite resolution}
	Let $\mathcal{C}$ be a skeletally small additive category. If $F\in {\rm Mod}-\mathcal{C}$ admits an exact sequence
	$$\cdots \rightarrow G_{2} \stackrel{\alpha_2}\rightarrow G_{1} \stackrel{\alpha_1}\rightarrow G_{0} \stackrel{\alpha}\rightarrow F\rightarrow 0,~ G_{i}\in {\rm mod}_{\infty}-\mathcal{C},$$
	then $F\in {\rm mod}_{\infty}-\mathcal{C}$.
\end{lem}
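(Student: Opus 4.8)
The plan is to build an explicit resolution of $F$ by finitely generated projectives, by assembling the given resolution $\cdots \to G_{1} \to G_{0} \to F \to 0$ with chosen finitely generated projective resolutions of the individual $G_{i}$. Concretely, for each $i \geq 0$ I would use $G_{i} \in {\rm mod}_{\infty}-\mathcal{C}$ to pick an exact complex $\cdots \to P_{1,i} \to P_{0,i} \to G_{i} \to 0$ with every $P_{j,i}$ a finitely generated projective. By the comparison theorem for projective resolutions, lift each differential $\alpha_{i}\colon G_{i} \to G_{i-1}$ to a chain map $P_{\bullet,i} \to P_{\bullet,i-1}$; after the usual sign adjustment this organises the $P_{j,i}$ into a first-quadrant double complex in ${\rm Mod}-\mathcal{C}$.

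I would then form the total complex $R_{\bullet}$ with $R_{k}=\bigoplus_{i+j=k} P_{j,i}$. The crucial point is that each anti-diagonal $\{(i,j) : i+j=k\}$ is \emph{finite} (here $0 \le i,j \le k$), so each $R_{k}$ is a finite direct sum of finitely generated projectives and is therefore itself a finitely generated projective. Since every column $P_{\bullet,i} \to G_{i}$ is exact and the augmentation complex $\cdots \to G_{1} \to G_{0} \to F \to 0$ is exact, a standard assembly argument (the two spectral sequences of the double complex, or the acyclic assembly lemma) shows that the augmented total complex $\cdots \to R_{1} \to R_{0} \to F \to 0$ is exact. As $R_{\bullet}$ is a complex of finitely generated projectives concentrated in non-negative degrees resolving $F$, this exhibits $F \in {\rm mod}_{\infty}-\mathcal{C}$.

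The main obstacle is the exactness of the total complex: one must check that the lifted horizontal maps assemble into a genuine double complex and that totalising it against the exact augmentation yields a resolution of $F$. Because the bicomplex lies in the first quadrant and each anti-diagonal is finite, the $\bigoplus$- and $\prod$-totalisations agree and there are no convergence subtleties in the (Grothendieck) abelian category ${\rm Mod}-\mathcal{C}$, so the acyclic assembly lemma applies cleanly. A secondary, minor point is that the construction produces a resolution by finitely generated \emph{projectives} rather than by representables $\mathcal{C}(-,X)$; but by \cite[Proposition 2.6]{H.Enomoto} the finitely generated projectives are exactly the projectives of ${\rm mod}_{\infty}-\mathcal{C}$, so a resolution by them already witnesses membership in ${\rm mod}_{\infty}-\mathcal{C}$ (alternatively one upgrades to representables by adding contractible summands).

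As an alternative avoiding total complexes, I could prove $F \in {\rm mod}_{n}-\mathcal{C}$ for every finite $n$ by induction, using that ${\rm mod}_{\infty}-\mathcal{C}=\bigcap_{n}{\rm mod}_{n}-\mathcal{C}$. The inductive step rests on the short exact sequence $0 \to \Omega \to G_{0} \to F \to 0$ with $\Omega=\ker(G_{0}\to F)$: the complex $\cdots \to G_{2} \to G_{1} \to \Omega \to 0$ is again a resolution by ${\rm mod}_{\infty}$-objects, so the inductive hypothesis gives $\Omega \in {\rm mod}_{n}-\mathcal{C}$, and then the standard transfer property for finite presentation (if $A \in {\rm mod}_{n}-\mathcal{C}$ and $B \in {\rm mod}_{n+1}-\mathcal{C}$ in a short exact sequence $0 \to A \to B \to C \to 0$, then $C \in {\rm mod}_{n+1}-\mathcal{C}$) yields $F \in {\rm mod}_{n+1}-\mathcal{C}$. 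Here the transfer property, provable by a short syzygy induction, is the main technical input.
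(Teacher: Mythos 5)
Your primary argument has a genuine gap at exactly the point you flag as ``the main obstacle'': the lifted horizontal maps do \emph{not} assemble into a double complex. If you choose finitely generated projective resolutions $P_{\bullet,i}\to G_i$ independently and lift each $\alpha_i$ by the comparison theorem, then the composite $P_{\bullet,i}\to P_{\bullet,i-2}$ lifts $\alpha_{i-1}\alpha_i=0$ and is therefore only null-homotopic, not zero; so $d_h^2\neq 0$ and the acyclic assembly lemma does not apply to this data. The standard repair --- a Cartan--Eilenberg resolution --- is built from chosen resolutions of the boundaries of the complex; since the given complex is exact these boundaries are the syzygies $Z_i$ (with $Z_{-1}=F$), and you have no a priori reason that they lie in ${\rm mod}_\infty-\mathcal{C}$: that is essentially what is being proved. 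So you cannot guarantee the Cartan--Eilenberg terms are finitely generated. (The totalization idea can be salvaged with higher homotopies, i.e.\ a twisted complex or multicomplex, but that is substantially more work and is not what you wrote.) Your side remark about upgrading finitely generated projectives to representables is fine.

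Your alternative argument in the last paragraph, however, is correct and closes the gap: the base case is clear since $F$ is a quotient of $G_0$, the truncated exact sequence $\cdots\to G_2\to G_1\to\Omega\to 0$ lets you apply the inductive hypothesis to $\Omega=\ker(G_0\to F)$, and the transfer lemma (if $A\in{\rm mod}_n-\mathcal{C}$ and $B\in{\rm mod}_{n+1}-\mathcal{C}$ in $0\to A\to B\to C\to 0$, then $C\in{\rm mod}_{n+1}-\mathcal{C}$) together with ${\rm mod}_\infty-\mathcal{C}=\bigcap_n{\rm mod}_n-\mathcal{C}$ (a Schanuel-type argument, which again needs the transfer lemma to handle direct summands) finishes the proof. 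This is a genuinely different route from the paper's. The paper builds the representable resolution directly in one pass: it takes an epimorphism $C_0\twoheadrightarrow G_0$ from a representable with kernel $H_0\in{\rm mod}_\infty-\mathcal{C}$, forms the pullback $G_1'=G_1\times_{G_0}C_0$, observes that $G_1'\in{\rm mod}_\infty-\mathcal{C}$ because it is an extension involving $G_1$ and $H_0$ and ${\rm mod}_\infty-\mathcal{C}$ is extension closed, and iterates on the new exact sequence $\cdots\to G_2\to G_1'\to C_0\to F\to 0$. That construction avoids both the double complex and the $FP_n$ bookkeeping; your induction instead relies on (and would require proving or citing) the standard finiteness-condition lemmas, but needs no diagram chasing beyond them.
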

\begin{proof}
	Because $G_{0}\in {\rm mod}_{\infty}-\mathcal{C}$, there is an exact sequence $$0 \rightarrow H_{0}\stackrel{\beta} \rightarrow C_{0} \stackrel{\gamma}\rightarrow G_{0} \rightarrow 0$$ 
	where $C_{0}$ is a representable functor and $H_{0}\in {\rm mod}_{\infty}-\mathcal{C}$. Consider the following pullback diagram
	\[\begin{tikzcd}
		&  & H_{0} \arrow[r,equal] \arrow[d] & H_{0} \arrow[d, "\beta"] &  &  \\
		\cdots \arrow[r] & G_{2} \arrow[r] \arrow[d,equal] & G_{1}' \arrow[r] \arrow[d] \arrow[dr, phantom, "{\rm PB}"] & C_{0} \arrow[r] \arrow[d, "\gamma"] & F \arrow[r] \arrow[d,equal] & 0 \\
		\cdots \arrow[r] & G_{2} \arrow[r, "\alpha_2"] & G_{1} \arrow[r, "\alpha_1"] & G_{0} \arrow[r,"\alpha"] & F \arrow[r] & 0
	\end{tikzcd}\]
    Since ${\rm mod}_{\infty}-\mathcal{C}$ is extension closed and $H_0, G_1\in {\rm mod}_{\infty}-\mathcal{C}$, then $G_{1}'\in {\rm mod}_{\infty}-\mathcal{C}$. Repeat the construction above, then one can obtain an exact sequence
    $$\cdots C_2\rightarrow C_1\rightarrow C_0\rightarrow F\rightarrow 0$$
    with $C_i$ representable. This finishes the proof.
\end{proof}

Assume that skeletally small exact category $\mathcal{E}=(\mathcal{A},\mathcal{S})$ has enough projectives $\mathcal{P}$. We can define a functor
\[\mathbb{P}:~\mathcal{E}\longrightarrow {\rm Mod}-\mathcal{P}, ~~X\mapsto \mathcal{E}(-,X)|_{\mathcal{P}}\]
whose image lies in ${\rm mod}_{\infty}-\mathcal{P}$ since $\mathcal{E}$ has enough projectives. We have the following result.

\begin{lem}\cite[Proposition 2.1, 2.8]{H.Enomoto} \label{Enomoto}
	The functor $\mathbb{P}:~\mathcal{E}\longrightarrow {\rm Mod}-\mathcal{P}$ is fully faithful, exact, preserves all extension groups and induces exact equivalence $\mathbb{P}:~\mathcal{E}\stackrel{\simeq} \longrightarrow {\rm Im}\mathbb{P}$. Moreover if $\mathcal{E}$ is idempotent complete, then ${\rm Im}\mathbb{P}$ is a resolving subcategory of ${\rm mod}_{\infty}-\mathcal{P}$.
\end{lem}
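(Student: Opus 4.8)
The plan is to verify the stated properties in the order full faithfulness and exactness, then preservation of extension groups and the induced equivalence, and finally the resolving property, which is the only place idempotent completeness will enter. First I would establish exactness: if $X\rightarrowtail Y\twoheadrightarrow Z$ is a conflation, then for each $P\in\mathcal{P}$ the sequence $0\to\mathcal{E}(P,X)\to\mathcal{E}(P,Y)\to\mathcal{E}(P,Z)\to 0$ is exact by the very definition of projectivity, and since exactness in ${\rm Mod}-\mathcal{P}$ is tested objectwise this says precisely that $0\to\mathbb{P}X\to\mathbb{P}Y\to\mathbb{P}Z\to 0$ is exact. For faithfulness, if $\mathbb{P}f=0$ for $f\colon X\to Y$, I would pick a deflation $p\colon P\twoheadrightarrow X$ with $P\in\mathcal{P}$; then $fp\in\mathcal{E}(P,Y)$ is the image of $p$ under $\mathcal{E}(P,f)=(\mathbb{P}f)_P=0$, so $fp=0$, and since the deflation $p$ is epic, $f=0$. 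For fullness I would fix a projective presentation $P_1\xrightarrow{d_1}P_0\xrightarrow{d_0}X$ (with $d_0$ a deflation and $d_1$ a deflation onto $\ker d_0$) and apply the exact functor $\mathbb{P}$ to obtain a projective presentation $\mathbb{P}P_1\to\mathbb{P}P_0\to\mathbb{P}X\to 0$, each $\mathbb{P}P_i=\mathcal{P}(-,P_i)$ being representable hence projective. Given $\varphi\colon\mathbb{P}X\to\mathbb{P}Y$, the composite $\varphi\circ\mathbb{P}d_0$ corresponds by Yoneda to some $g_0\colon P_0\to Y$; faithfulness together with $d_0d_1=0$ forces $g_0d_1=0$, so $g_0$ kills $\ker d_0$ and factors as $g_0=fd_0$, and cancelling the epimorphism $\mathbb{P}d_0$ in $\mathbb{P}f\circ\mathbb{P}d_0=\varphi\circ\mathbb{P}d_0$ yields $\mathbb{P}f=\varphi$.

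Next I would treat the extension groups. Applying $\mathbb{P}$ to a projective resolution $\cdots\to P_1\to P_0\to X$ in $\mathcal{E}$ produces, by exactness, a projective resolution of $\mathbb{P}X$ in ${\rm Mod}-\mathcal{P}$. Computing ${\rm Ext}^n$ on both sides as the cohomology of the Hom-complex against $Y$ resp. $\mathbb{P}Y$, full faithfulness identifies $\mathcal{E}(P_i,Y)\cong{\rm Hom}(\mathbb{P}P_i,\mathbb{P}Y)$ compatibly with the differentials, so the two complexes are isomorphic and ${\rm Ext}^n_{\mathcal{E}}(X,Y)\cong{\rm Ext}^n_{{\rm Mod}-\mathcal{P}}(\mathbb{P}X,\mathbb{P}Y)$ for all $n$ (the case $n=0$ being full faithfulness). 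A fully faithful functor is an equivalence onto its image, and since $\mathbb{P}$ preserves and, being exact and fully faithful, reflects conflations, transporting the exact structure of $\mathcal{E}$ along the equivalence makes $\mathbb{P}\colon\mathcal{E}\to{\rm Im}\mathbb{P}$ an exact equivalence.

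Finally, for the resolving property I assume $\mathcal{E}$, and hence $\mathcal{P}$, idempotent complete. By the cited description the projectives of ${\rm mod}_{\infty}-\mathcal{P}$ are exactly the representables $\mathbb{P}P$, so ${\rm Im}\mathbb{P}$ contains all projectives. Closure under summands follows because an idempotent of $\mathbb{P}X$ corresponds under full faithfulness to an idempotent of $X$, which splits in $\mathcal{E}$, so the summand again lies in the image. Closure under extensions follows from the Ext-isomorphism: a short exact sequence $0\to\mathbb{P}X\to E\to\mathbb{P}Z\to 0$ represents a class in ${\rm Ext}^1_{{\rm Mod}-\mathcal{P}}(\mathbb{P}Z,\mathbb{P}X)\cong{\rm Ext}^1_{\mathcal{E}}(Z,X)$, and the $\mathbb{P}$-image of a conflation realizing that class is an equivalent extension, forcing $E\cong\mathbb{P}Y$. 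The crucial and most delicate point is closure under kernels of epimorphisms: given an epimorphism $\mathbb{P}Y\to\mathbb{P}Z$, fullness writes it as $\mathbb{P}g$; choosing a deflation $q\colon P\twoheadrightarrow Z$ with $P$ projective and using that $\mathbb{P}g$ is epic, $q$ lifts to $h\colon P\to Y$ with $gh=q$, so $q=gh$ is a deflation and Heller's cancellation axiom (Lemma \ref{deflation}, applicable since $\mathcal{E}$ is weakly idempotent complete) forces $g$ itself to be a deflation; then $\ker(\mathbb{P}g)\cong\mathbb{P}(\ker g)\in{\rm Im}\mathbb{P}$.

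I expect the real difficulty to lie exactly in that last implication, namely converting the purely functor-categorical surjectivity of $\mathbb{P}g$ into the exact-categorical statement that $g$ is an admissible epic. Everything else is either formal (equivalence onto image) or a routine objectwise/resolution argument; Heller's cancellation axiom is precisely the tool that bridges the two worlds, and it is the reason weak (hence any) idempotent completeness is needed for the resolving conclusion.
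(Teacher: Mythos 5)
The paper does not prove this lemma at all: it is quoted verbatim from Enomoto (\cite[Propositions 2.1, 2.8]{H.Enomoto}), so there is no in-paper argument to compare against. Your reconstruction is correct and follows the standard line of that reference: objectwise exactness from the definition of projectivity, faithfulness via a deflation from a projective, fullness via a projective presentation and Yoneda, Ext-comparison via image of a projective resolution, and the resolving property from idempotent splitting, the ${\rm Ext}^{1}$-isomorphism, and the conversion of a surjection $\mathbb{P}g$ into a deflation $g$ by lifting a deflation $P\twoheadrightarrow Z$ through $g$ and cancelling. Your diagnosis that the last step is where (weak) idempotent completeness enters is exactly right. The only phrase I would tighten is the claim that $\mathbb{P}$ ``reflects conflations'' because it is exact and fully faithful --- that is not automatic for such functors in general; either read the statement as transporting the exact structure of $\mathcal{E}$ onto ${\rm Im}\mathbb{P}$ (as you also suggest, which makes the equivalence exact by fiat), or note that full faithfulness shows $f$ is a kernel of $g$ in $\mathcal{E}$, after which B\"uhler's Proposition 2.16 (the ``obscure axiom'', which needs no completeness hypothesis once the kernel is known to exist) upgrades $g$ to a deflation. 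This is a matter of phrasing, not a gap.
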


\subsection{Tilting subcategories in exact categories}

\begin{defn}\cite[Definition 4.1]{Sauter}
	Let $\mathcal{E}=(\mathcal{A},\mathcal{S})$ be an exact category. A subcategory $\mathcal{T}$ of $\mathcal{E}$ is {\em n-tilting} if it satisfies: 
	\begin{enumerate}
		\item $\mathcal{T}^{\bot}:=\bigcap\limits_{i=1}^{\infty}\mathcal{T}^{\bot_{i}}$ has enough projectives $\mathcal{T}$, 
		\item ${\rm Cores}_{n}(\mathcal{T}^{\bot})=\mathcal{E}$ (i.e. $\forall X\in \mathcal{E},\exists {\rm~exact~sequence}~X\rightarrowtail Y_{0}\rightarrow \cdots \rightarrow Y_{n-1}\twoheadrightarrow Y_{n},~Y_{i}\in \mathcal{T}^{\bot}$).
	\end{enumerate}
\end{defn}

If $\mathcal{E}$ has enough projectives, then the definition coincides with the usual one. 

\begin{lem}\cite[Theorem 5.3]{Sauter}
	Let $\mathcal{E}=(\mathcal{A},\mathcal{S})$ be an exact category with enough projectives $\mathcal{P}$. An additive subcategory $\mathcal{T}$ is $n$-tilting if and only if it satisfies:
	\begin{enumerate}
		\item $\mathcal{T}$ is closed under summands and self-orthogonal (i.e. $\mathcal{T} \subseteq \mathcal{T}^{\bot}$),
		\item ${\rm pd}_{\mathcal{E}}\mathcal{T}\leq n$,
		\item $\mathcal{P} \subseteq {\rm Cores}_{n}(\mathcal{T})$. 
	\end{enumerate}
\end{lem}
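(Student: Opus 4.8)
The plan is to prove the two lists of conditions equivalent, exploiting throughout that $\mathcal{T}^{\bot}$ is a full exact subcategory in which $\mathcal{T}\subseteq\mathcal{T}^{\bot}$ plays the role of (enough) projectives. First I would record the two structural facts that drive everything: applying the long exact sequence of ${\rm Ext}^{i}_{\mathcal{E}}(\mathcal{T},-)$ to a conflation $A\rightarrowtail B\twoheadrightarrow C$ shows that $\mathcal{T}^{\bot}$ is closed under extensions and under cokernels of inflations, so it is coresolving. The recurring computational tool is dimension shifting: for a $\mathcal{T}^{\bot}$-coresolution $X\rightarrowtail Y^{0}\to\cdots\to Y^{m}$ with cosyzygies $Z^{i}$, the vanishing ${\rm Ext}^{\geq 1}_{\mathcal{E}}(\mathcal{T},Y^{i})=0$ forces $${\rm Ext}^{j}_{\mathcal{E}}(\mathcal{T},Z^{i})\cong{\rm Ext}^{j+i+1}_{\mathcal{E}}(\mathcal{T},X)\quad(j\geq 1).$$

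For the forward direction, condition (1) is immediate: as the class of projectives of the exact category $\mathcal{T}^{\bot}$, $\mathcal{T}$ is closed under summands and satisfies $\mathcal{T}\subseteq\mathcal{T}^{\bot}$, i.e. is self-orthogonal. Condition (2) follows from the displayed isomorphism applied to an arbitrary $X\in\mathcal{E}={\rm Cores}_{n}(\mathcal{T}^{\bot})$ with $j=1,\ i=n-1$: since the last term $Y^{n}$ lies in $\mathcal{T}^{\bot}$ we get ${\rm Ext}^{n+1}_{\mathcal{E}}(\mathcal{T},X)\cong{\rm Ext}^{1}_{\mathcal{E}}(\mathcal{T},Y^{n})=0$, whence ${\rm pd}_{\mathcal{E}}\mathcal{T}\leq n$. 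For condition (3) I would take $P\in\mathcal{P}$ with a $\mathcal{T}^{\bot}$-coresolution $P\rightarrowtail Y^{0}\twoheadrightarrow Z^{0}$, choose a relative projective cover $T^{0}\twoheadrightarrow Y^{0}$ with $T^{0}\in\mathcal{T}$ and kernel $W^{0}\in\mathcal{T}^{\bot}$, and form the pullback of $P\rightarrowtail Y^{0}$ along it; projectivity of $P$ splits the resulting conflation $W^{0}\rightarrowtail F\twoheadrightarrow P$, so $F\cong P\oplus W^{0}$ and one extracts $P\rightarrowtail T^{0}\twoheadrightarrow C^{0}$ with $C^{0}$ an extension of $Z^{0}$ by $W^{0}$, hence $C^{0}\in{\rm Cores}_{n-1}(\mathcal{T}^{\bot})$. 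The first term is now in $\mathcal{T}$; repeating this reduction on the cosyzygies yields the length-$n$ coresolution by $\mathcal{T}$.

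For the converse I assume (1)--(3). To prove ${\rm Cores}_{n}(\mathcal{T}^{\bot})=\mathcal{E}$ I would separate existence from length: once $X$ is known to admit \emph{some} finite $\mathcal{T}^{\bot}$-coresolution, its $(n-1)$-st cosyzygy satisfies ${\rm Ext}^{j}_{\mathcal{E}}(\mathcal{T},Z^{n-1})\cong{\rm Ext}^{j+n}_{\mathcal{E}}(\mathcal{T},X)=0$ for all $j\geq 1$ by (2), so $Z^{n-1}\in\mathcal{T}^{\bot}$ and truncation produces a coresolution of length exactly $n$. A finite coresolution itself would be built from a projective presentation $\Omega X\rightarrowtail P\twoheadrightarrow X$ together with the $\mathcal{T}$-coresolution of $P$ from (3): pushing the coresolution out along $P\twoheadrightarrow X$ replaces $P$ by $X$ and gives $X\rightarrowtail D^{0}\to T^{1}\to\cdots\to T^{n}$ whose tail already lies in $\mathcal{T}\subseteq\mathcal{T}^{\bot}$, after which the single remaining term is coresolved and spliced. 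For the statement that $\mathcal{T}^{\bot}$ has enough projectives $\mathcal{T}$, the objects of $\mathcal{T}$ are projective in $\mathcal{T}^{\bot}$ by (1); given $Y\in\mathcal{T}^{\bot}$ I would extend a deflation $P\twoheadrightarrow Y$ over the inflation $P\rightarrowtail T^{0}$ from (3), the obstruction living in ${\rm Ext}^{1}_{\mathcal{E}}(C_{0},Y)\cong{\rm Ext}^{n}_{\mathcal{E}}(T^{n},Y)=0$ by dimension shifting along the $\mathcal{T}$-coresolution of $C_{0}=\mathrm{coker}(P\rightarrowtail T^{0})$, and Lemma \ref{deflation} (Heller cancellation) then guarantees the extension $T^{0}\twoheadrightarrow Y$ is again a deflation.

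The hard part, in both directions, is the interchange between the projective resolutions coming from $\mathcal{P}$ and the coresolutions by $\mathcal{T}$: the cosyzygies produced by pushing out a $\mathcal{T}$-coresolution need not lie in $\mathcal{T}^{\bot}$, precisely because ${\rm Ext}^{\geq 1}_{\mathcal{E}}(\mathcal{T},P)$ need not vanish, and objects of infinite projective dimension rule out a naive induction on syzygies. The decisive device is to decouple \emph{existence} of a finite $\mathcal{T}^{\bot}$-coresolution from its \emph{length}, the uniform bound $n$ being forced afterwards by ${\rm pd}_{\mathcal{E}}\mathcal{T}\leq n$ through the dimension-shift identity; here it may be cleanest to import existence by transporting $X$ along the exact, Ext-preserving embedding $\mathbb{P}$ of Lemma \ref{Enomoto} into the abelian category ${\rm mod}_{\infty}-\mathcal{P}$. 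I expect the most delicate single point to be ensuring, in the enough-projectives step, that the deflation $T^{0}\twoheadrightarrow Y$ can be arranged with kernel in $\mathcal{T}^{\bot}$ (equivalently, taken to be a right $\mathcal{T}$-approximation), and weak idempotent completeness is exactly what legitimizes treating the constructed epimorphisms as genuine deflations.
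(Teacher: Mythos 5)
The paper does not actually prove this lemma: it is imported wholesale as \cite[Theorem 5.3]{Sauter}, so there is no internal argument to compare yours against, and your sketch has to stand on its own. Its skeleton is right (coresolving property of $\mathcal{T}^{\bot}$, dimension shifting, splitting via projectivity), and the easy implications --- forward (1), forward (2), and the length-versus-existence reduction in the converse --- are sound. But the three points you yourself flag as ``delicate'' are exactly where the content of the theorem lives, and none of them is closed. (i) Forward (3): the splitting of $W^{0}\rightarrowtail F\twoheadrightarrow P$ uses ${\rm Ext}^{1}_{\mathcal{E}}(P,W^{0})=0$, i.e.\ projectivity of $P$. For the cosyzygy of $P\rightarrowtail T^{0}\twoheadrightarrow C^{0}$ one only gets, for $Y\in\mathcal{T}^{\bot}$, that ${\rm Ext}^{1}_{\mathcal{E}}(C^{0},Y)$ is the cokernel of ${\rm Hom}_{\mathcal{E}}(T^{0},Y)\rightarrow{\rm Hom}_{\mathcal{E}}(P,Y)$, which vanishes precisely when $P\rightarrowtail T^{0}$ is a left $\mathcal{T}^{\bot}$-approximation; so ``repeating this reduction on the cosyzygies'' presupposes special $\mathcal{T}^{\bot}$-preenvelopes that you have not constructed. (ii) Converse, ${\rm Cores}_{n}(\mathcal{T}^{\bot})=\mathcal{E}$: after the pushout one has $\Omega X\rightarrowtail T^{0}\twoheadrightarrow D^{0}$, and ${\rm Ext}^{i}_{\mathcal{E}}(\mathcal{T},D^{0})$ embeds into ${\rm Ext}^{i+1}_{\mathcal{E}}(\mathcal{T},\Omega X)$, which is governed by ${\rm Ext}^{*}_{\mathcal{E}}(\mathcal{T},P)$ and need not vanish once $n\geq 2$; hence $D^{0}$ is not known to lie in $\mathcal{T}^{\bot}$, nor in any ${\rm Cores}_{k}(\mathcal{T}^{\bot})$, and ``the single remaining term is coresolved and spliced'' is circular. (For $n=1$ this step does close, since ${\rm pd}_{\mathcal{E}}\mathcal{T}\leq 1$ and self-orthogonality give $\mathsf{Fac}\mathcal{T}\subseteq\mathcal{T}^{\bot}$ and $D^{0}\in\mathsf{Fac}\mathcal{T}$.) Transporting along $\mathbb{P}$ does not repair this, because the required coresolution must consist of objects of $\mathcal{E}$ and of the orthogonal computed in $\mathcal{E}$, neither of which the ambient abelian category supplies.

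(iii) For ``$\mathcal{T}^{\bot}$ has enough projectives $\mathcal{T}$'': your dimension shift ${\rm Ext}^{1}_{\mathcal{E}}(C_{0},Y)\cong{\rm Ext}^{n}_{\mathcal{E}}(T^{n},Y)=0$ is correct and does produce a map $T^{0}\rightarrow Y$ lifting $P\twoheadrightarrow Y$, but the kernel $L$ of the resulting deflation sits in a conflation $\Omega Y\rightarrowtail L\twoheadrightarrow C_{0}$, and $\Omega Y\in\mathcal{T}^{\bot}$ fails in general because ${\rm Ext}^{*}_{\mathcal{E}}(\mathcal{T},P)$ need not vanish. Without $L\in\mathcal{T}^{\bot}$ you have not exhibited a conflation of the exact category $\mathcal{T}^{\bot}$, which is what ``enough projectives'' requires; closing this (and (i), (ii)) is precisely the role of the special-approximation / complete-cotorsion-pair machinery in \cite{Sauter}, an idea absent from your sketch. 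Finally, your appeal to Lemma \ref{deflation} quietly imports weak idempotent completeness, which the statement does not assume. In short: a plausible roadmap, but each of the three substantive implications is still missing its key lemma.
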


The default meaning of "tilting" is "1-tilting". Clearly, $\mathcal{P}$ is a 0-tilting subcategory. For convenience, if $n=1$ and $\mathcal{T}$ satisfies (1) and (2), then we call it a {\em partial tilting} subcategory.

\subsection{Torsion and ($\tau$-) cotorsion pairs in exact categories}

\begin{defn}
	Let $\mathcal{E}=(\mathcal{A},\mathcal{S})$ be an exact category.  A pair of subcategories $(\mathcal{C},\mathcal{D})$ of $\mathcal{E}$ is called a {\em torsion pair} if it satisfies:
	\begin{enumerate}
		\item ${\rm Hom}_{\mathcal{E}}(\mathcal{C},\mathcal{D})=0$
		\item For every $X\in \mathcal{E}$, there exists a conflation 
		\[C\rightarrowtail X\twoheadrightarrow D\]
		with $C\in \mathcal{C},~D\in \mathcal{D}$.
	\end{enumerate}
\end{defn}

For a torsion pair $(\mathcal{C},\mathcal{D})$, we have $\mathcal{C}={^{\bot_{0}}\mathcal{D}},~\mathcal{D}=\mathcal{C}^{\bot_{0}}$. Hence $\mathcal{C}$ (resp. $\mathcal{D}$) is factor (resp. subobject) closed and extension closed. We call a subcategory of $\mathcal{E}$ a {\em torsion} (resp. {\em torsion free}) {\em class} if it is factor (resp. subobject) closed and extension closed.

\begin{defn}\cite[Lemma 1.6]{BZ}
	Let $\mathcal{E}=(\mathcal{A},\mathcal{S})$ be an exact category. A pair of subcategories $(\mathcal{C},\mathcal{D})$ of $\mathcal{E}$ is called a ({\em complete}) {\em cotorsion pair} if it satisfies:
	\begin{enumerate}
		\item $\mathcal{C},\mathcal{D}$ are closed under direct summands,
		\item ${\rm Ext}_{\mathcal{E}}^{1}(\mathcal{C},\mathcal{D})=0$,
		\item For each object $X\in \mathcal{E}$, there exist conflations
		$$D\rightarrowtail C\twoheadrightarrow X,~C\in \mathcal{C},D\in \mathcal{D}$$
		and 
		$$X\rightarrowtail D^{'}\twoheadrightarrow C^{'},~C'\in \mathcal{C},D'\in \mathcal{D}.$$
	\end{enumerate}
\end{defn}

\begin{defn}\cite[Definition 4.1]{AST}
	Let $\mathcal{E}=(\mathcal{A},\mathcal{S})$ be an exact category with enough projectives $\mathcal{P}$. A pair of subcategories $(\mathcal{C},\mathcal{D})$ of $\mathcal{E}$ is called a {\em $\tau$-cotorsion pair} if it satisfies:
	\begin{enumerate}
		\item $\mathcal{C} ={^{\bot_1}\mathcal{D}}$,
		\item For every $P\in \mathcal{P}$, there is an exact sequence 
		$$P\stackrel{f} \rightarrow D\twoheadrightarrow C$$ 
		where $D\in \mathcal{C} \cap \mathcal{D}, C\in \mathcal{C}$ and $f$ is a left $\mathcal{D}$-approximation.
	\end{enumerate}
\end{defn}

Cotorsion pairs are $\tau$-cotorsion pairs since every projective object is in $\mathcal{C}$ and $\mathcal{C}$ is extension closed.

\begin{defn}
	Let $\mathcal{E}=(\mathcal{A},\mathcal{S})$ be an exact category (with enough projectives). A triple $(\mathcal{C},\mathcal{D},\mathcal{F})$ is called a ($\tau$-) {\em cotorsion torsion triple} if $(\mathcal{C},\mathcal{D})$ is a ($\tau$-) cotorsion pair and $(\mathcal{D},\mathcal{F})$ is a torsion pair.
\end{defn}

\section{Support $\tau$-tilting subcategories}

In this section. we always assume that $\mathcal{E}=(\mathcal{A},\mathcal{S})$ is an exact category with enough projectives $\mathcal{P}$.

\begin{defn}\label{def}
	An additively closed subcategory $\mathcal{T}$ of $\mathcal{E}$ is a {\em support $\tau$-tilting} subcategory if it satisfies:
	\begin{enumerate}
		\item ${\rm Ext}^{1}_{\mathcal{E}}(\mathcal{T},\mathsf{Fac}\mathcal{T})=0$,
		\item For every $P\in \mathcal{P}$ there is an exact sequence
		$$P\stackrel{f} \rightarrow T^{0}\twoheadrightarrow T^{1}$$
		where $T^{0},T^{1}\in \mathcal{T}$ and $f$ is a left $\mathcal{T}$-approximation.
	\end{enumerate}
\end{defn}

It is {\em $\tau$-rigid} if (1) is satisfied. A support $\tau$-tilting subcategory is {\em $\tau$-tilting } if for each $0\neq P\in \mathcal{P}$ there is a nonzero $f$ satisfying (2). We call an object $T$ a support $\tau$-tilting (resp. $\tau$-rigid, $\tau$-tilting, tilting) object if $\mathsf{add}T$ is a support $\tau$-tilting (resp. $\tau$-rigid, $\tau$-tilting, tilting) subcategory.

\begin{rem}
(1) When $\mathcal{E}$ is an abelian category, support $\tau$-tilting subcategories are just weak support $\tau$-tilting subcategories in the sense of \cite{AST}. If moreover $\mathcal{E}$ is $k$-linear ($k$ is a field) Hom-finite, then our definition coincides with that in \cite{Liu and Zhou}. 

(2) If $\mathcal{E}={\rm mod}-\Lambda$ for an artin algebra $\Lambda$, our definition coincides with that in \cite{AIR} by \cite[Proposition 5.8]{AS81} and \cite[Proposition 2.14]{G.Jasso}.

(3) Tilting subcategories are support $\tau$-tilting subcategories by \cite[Lemma 4.4]{Sauter}.
\end{rem}

\begin{lem}\label{criterion}
	Let  $\mathcal{T}$  be a support $\tau$-tilting subcategory of $\mathcal{E}$. Then $\mathcal{T}$ is tilting if and only if for each $P\in \mathcal{P}$ there is an inflation $f$ satisfying Definition~\ref{def} (2).
\end{lem}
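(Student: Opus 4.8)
The plan is to test tiltingness against the quoted characterization of $1$-tilting subcategories: $\mathcal{T}$ is tilting exactly when (i) it is self-orthogonal, (ii) $\mathrm{pd}_{\mathcal{E}}\mathcal{T}\le 1$, and (iii) $\mathcal{P}\subseteq\mathrm{Cores}_{1}(\mathcal{T})$. The inflation clause in Definition~\ref{def}(2) is visibly the same as (iii), so the whole point is to show that, in the presence of Definition~\ref{def}(1), the homological conditions (i) and (ii) are forced once the approximations are inflations.

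For the forward implication, assume $\mathcal{T}$ is tilting. By (iii) each $P\in\mathcal{P}$ admits a conflation $P\xrightarrow{\,f\,}T^{0}\twoheadrightarrow T^{1}$ with $T^{0},T^{1}\in\mathcal{T}$; in particular $f$ is an inflation. Applying $\mathrm{Hom}_{\mathcal{E}}(-,T)$ for $T\in\mathcal{T}$ gives exactness of $\mathrm{Hom}_{\mathcal{E}}(T^{0},T)\xrightarrow{f^{*}}\mathrm{Hom}_{\mathcal{E}}(P,T)\to\mathrm{Ext}^{1}_{\mathcal{E}}(T^{1},T)$, and the last group vanishes by self-orthogonality, so $f^{*}$ is surjective and $f$ is a left $\mathcal{T}$-approximation. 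Hence $f$ is an inflation satisfying Definition~\ref{def}(2); this half is essentially \cite[Lemma 4.4]{Sauter}.

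For the converse I would verify (i)--(iii). Condition (iii) is immediate, and since $\mathcal{T}\subseteq\mathsf{Fac}\mathcal{T}$ (each $T\twoheadrightarrow T$ is a deflation) Definition~\ref{def}(1) already yields $\mathrm{Ext}^{1}_{\mathcal{E}}(\mathcal{T},\mathcal{T})=0$, the degree-one instance of (i). Everything then hinges on (ii), because once $\mathrm{pd}_{\mathcal{E}}\mathcal{T}\le 1$ holds the higher self-orthogonality $\mathrm{Ext}^{\ge 2}_{\mathcal{E}}(\mathcal{T},\mathcal{T})=0$ is automatic and (i) follows in full. To approach (ii) I would fix $T\in\mathcal{T}$, pick a deflation $P\twoheadrightarrow T$ with $P\in\mathcal{P}$ and syzygy conflation $\Omega T\rightarrowtail P\twoheadrightarrow T$, and compare it with the coresolution $P\rightarrowtail T^{0}\twoheadrightarrow T^{1}$. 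Composing the inflations $\Omega T\rightarrowtail P\rightarrowtail T^{0}$ and invoking the nine lemma (cf. \cite{T.Buhler}) produces a conflation $\Omega T\rightarrowtail T^{0}\twoheadrightarrow W$ whose cokernel is itself an extension $T\rightarrowtail W\twoheadrightarrow T^{1}$ of objects of $\mathcal{T}$. Feeding $\mathrm{Hom}_{\mathcal{E}}(-,F)$ with $F\in\mathsf{Fac}\mathcal{T}$ into $\Omega T\rightarrowtail T^{0}\twoheadrightarrow W$ and using $\mathrm{Ext}^{1}_{\mathcal{E}}(T^{0},F)=0$ identifies $\mathrm{Ext}^{2}_{\mathcal{E}}(T,F)=\mathrm{Ext}^{1}_{\mathcal{E}}(\Omega T,F)$ with a subgroup of $\mathrm{Ext}^{2}_{\mathcal{E}}(W,F)$.

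The main obstacle is exactly condition (ii): the dimension shift just produced is self-referential, only re-expressing $\mathrm{Ext}^{2}_{\mathcal{E}}(\mathcal{T},\mathsf{Fac}\mathcal{T})$ in terms of itself, so no naive induction terminates, and indeed $\mathrm{pd}_{\mathcal{E}}\mathcal{T}\le 1$ is the genuinely new input (support $\tau$-tilting subcategories may have unbounded projective dimension). The way I expect to break the circularity is to establish the tilting torsion-pair identity $\mathsf{Fac}\mathcal{T}=\mathcal{T}^{\bot_{1}}$, which is equivalent to (ii). The easy inclusion $\mathsf{Fac}\mathcal{T}\subseteq\mathcal{T}^{\bot_{1}}$ is Definition~\ref{def}(1), and $\mathsf{Fac}\mathcal{T}$ is a torsion class: it is quotient-closed by construction and extension-closed by the standard $\tau$-rigid splitting argument (pull a deflation $T'\twoheadrightarrow C$ back along the relevant subobject and split the result using $\mathrm{Ext}^{1}_{\mathcal{E}}(\mathcal{T},\mathsf{Fac}\mathcal{T})=0$). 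For the reverse inclusion, and for the passage from ``$\mathcal{T}^{\bot_{1}}$ quotient-closed'' to the vanishing $\mathrm{Ext}^{2}_{\mathcal{E}}(\mathcal{T},-)=0$, I anticipate transporting the question along the exact, extension-preserving, fully faithful embedding $\mathbb{P}$ of Lemma~\ref{Enomoto} into $\mathrm{Mod}\text{-}\mathcal{P}$: there one has enough injectives, the equality $\mathrm{pd}_{\mathcal{E}}T=\mathrm{pd}_{\mathrm{Mod}\text{-}\mathcal{P}}\mathbb{P}T$, and the classical cosyzygy argument that a rigid object whose $\mathrm{Ext}^{1}$-orthogonal is quotient-closed has projective dimension at most one (as in the abelian treatments \cite{AST, Liu and Zhou}). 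This transport is the step I expect to require the most care. With (ii) secured, (i)--(iii) all hold and the characterization gives that $\mathcal{T}$ is tilting.
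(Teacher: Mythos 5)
Your forward direction is fine and matches the paper's (which simply says ``the necessity is clear''). The problem is the converse: you correctly isolate $\mathrm{pd}_{\mathcal{E}}\mathcal{T}\le 1$ as the crux, you correctly diagnose that your own dimension-shift is circular, and then you do not actually close the gap. The route you propose --- establish $\mathsf{Fac}\mathcal{T}=\mathcal{T}^{\bot_{1}}$ and then transport to ${\rm Mod}\text{-}\mathcal{P}$ to run the cosyzygy argument --- is both incomplete and doubtful as stated. First, the asserted equivalence of $\mathsf{Fac}\mathcal{T}=\mathcal{T}^{\bot_{1}}$ with $\mathrm{pd}\le 1$ is not justified, and the inclusion $\mathcal{T}^{\bot_{1}}\subseteq\mathsf{Fac}\mathcal{T}$ is deferred entirely. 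Second, the cosyzygy argument needs $\mathrm{Ext}^{1}$ of $\mathcal{T}$ to vanish on quotients of injectives of the ambient category; in ${\rm Mod}\text{-}\mathcal{P}$ this would require $(\mathbb{P}\mathcal{T})^{\bot_{1}}$ (or at least the image of injectives and their quotients) to be controlled by the hypothesis $\mathrm{Ext}^{1}_{\mathcal{E}}(\mathcal{T},\mathsf{Fac}_{\mathcal{E}}\mathcal{T})=0$, which only speaks about quotients of objects of $\mathcal{T}$ \emph{inside} $\mathcal{E}$ and says nothing about arbitrary quotients in the (much larger) module category. Third, $\mathbb{P}$ is only available when $\mathcal{E}$ is skeletally small, which is not assumed in Section 3, and even then $\mathrm{pd}_{{\rm Mod}\text{-}\mathcal{P}}\mathbb{P}T\le 1$ is a strictly stronger statement than $\mathrm{pd}_{\mathcal{E}}T\le 1$, so the ``transport'' goes the wrong way.

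The paper's actual argument is much more elementary and stays inside $\mathcal{E}$: to kill a class in $\mathrm{Ext}^{2}_{\mathcal{E}}(T,X)$, represent it by an exact sequence $X\rightarrowtail Z_{2}\rightarrow Z_{1}\twoheadrightarrow T$, choose a deflation $P\twoheadrightarrow Z_{2}$ with $P$ projective, and push the conflation $P\rightarrowtail T^{0}\twoheadrightarrow T^{1}$ out along it to get an inflation $Z_{2}\rightarrowtail M$ with $M\in\mathsf{Fac}\mathcal{T}$ (the induced map $T^{0}\to M$ is a deflation by the Five Lemma). Pushing the original $2$-extension out along $Z_{2}\rightarrowtail M$ yields an equivalent $2$-extension $X\rightarrowtail M\rightarrow N\twoheadrightarrow T$ whose intermediate cokernel $W=\mathrm{coker}(X\rightarrowtail M)$ is a quotient of $M$, hence lies in $\mathsf{Fac}\mathcal{T}$; then $\mathrm{Ext}^{1}_{\mathcal{E}}(T,W)=0$ forces the right-hand half to split and the Yoneda class to vanish. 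This uses only the two pushout axioms and Definition~\ref{def}(1), needs no injectives, no smallness, and no detour through $\mathcal{T}^{\bot_{1}}$. You should replace the last paragraph of your proposal with this computation.
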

\begin{proof}
	The necessity is clear. For the sufficiency, we need to show ${\rm pd}_{\mathcal{E}}\mathcal{T}\leq 1$. It suffices to prove ${\rm Ext}_{\mathcal{E}}^{2}(T,X)=0$ for $T\in\mathcal{T}, X\in \mathcal{E}$. Consider an exact sequence $$X\stackrel{a}\rightarrowtail Z_{2}\stackrel{b}\rightarrow Z_{1}\stackrel{c}\twoheadrightarrow T.$$ Because $\mathcal{E}$ has enough projectives, there exists a deflation $P\twoheadrightarrow Z_{2}$ with $P\in \mathcal{P}$. By assumption, $P$ admits a conflation
	$$P\stackrel{f}\rightarrowtail T^{0}\twoheadrightarrow T^{1},~T^0, T^1\in\mathcal{T}.$$
	Then there is a pushout diagram
	\[\begin{tikzcd}
		P \arrow[r,tail, "f"] \arrow[d,two heads] \arrow[dr, phantom, "{\rm PO}"]
		& T^{0} \arrow[r,two heads] \arrow[d,two heads, "h"] &T^{1} \arrow[d,equal]\\
		Z_{2} \arrow[r,tail, "g"] & M \arrow[r,two heads] &T^{1}
	\end{tikzcd}\]
    in which $h$ is a deflation by Five Lemma (\cite[Corollary 3.2]{T.Buhler}) and $g$ is an inflation.
    Consider the pushout of $b$ along $g$, there is a commutative diagram
    \[\begin{tikzcd}
    	X \arrow[r,tail,"a"] \arrow[d,equal] & Z_{2} \arrow[r, "b"] \arrow[d,tail, "g"] \arrow[dr, phantom, "{\rm PO}"] & Z_{1} \arrow[d,tail] \arrow[r,two heads,"c"] & T \arrow[d,equal]\\
    	X \arrow[r,tail] & M \arrow[r] & N \arrow[r,two heads] & T.
    \end{tikzcd}\]
    Since $h$ is a deflation, $M\in \mathsf{Fac}\mathcal{T}$. This implies the second row in the last diagram is an exact sequence whose class is zero in the group ${\rm Ext}_{\mathcal{E}}^{2}(T,X)$ and so is the first row. Thus we finish the proof.
\end{proof}

The following result is well-known in \cite{AIR}. A weaker result is also proved in abelian categories (see \cite[Lemma 5.1]{AST} and \cite[Corollary 3.3]{Liu and Zhou}). 

\begin{lem}\label{enough proj T}
	Assume $\mathcal{E}$ is weakly idempotent  complete. Let $\mathcal{T}$ be a support $\tau$-tilting subcategory of $\mathcal{E}$. Then $\mathsf{Fac}\mathcal{T}$ is a full exact subcategory of $\mathcal{E}$ and has enough projectives $\mathcal{T}$.
\end{lem}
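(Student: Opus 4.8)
The plan is to first verify that $\mathsf{Fac}\mathcal{T}$ is additively closed and extension closed, so that it becomes a full exact subcategory of $\mathcal{E}$ with the induced exact structure, and then to identify $\mathcal{T}$ with its projectives and produce enough of them. Closure under finite direct sums is immediate since $\mathcal{T}$ is additively closed and a direct sum of $\mathcal{T}$-deflations is a $\mathcal{T}$-deflation. For closure under summands I would use that $\mathcal{E}$ is weakly idempotent complete: if $X\oplus X'\in\mathsf{Fac}\mathcal{T}$, the projection $X\oplus X'\to X$ is a retraction, hence a deflation by Lemma \ref{deflation}, so composing it with a $\mathcal{T}$-deflation onto $X\oplus X'$ gives a $\mathcal{T}$-deflation onto $X$.

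For extension closure, given a conflation $X'\rightarrowtail Y\twoheadrightarrow X''$ with $X',X''\in\mathsf{Fac}\mathcal{T}$, I would pull it back along a $\mathcal{T}$-deflation $T''\twoheadrightarrow X''$ to obtain a conflation $X'\rightarrowtail E\twoheadrightarrow T''$ together with a deflation $E\twoheadrightarrow Y$. By Definition \ref{def}(1) we have $\mathrm{Ext}^1_{\mathcal{E}}(T'',X')=0$, so this conflation splits and $E\cong X'\oplus T''$; composing a $\mathcal{T}$-deflation onto $X'$ with the deflation $E\twoheadrightarrow Y$ then exhibits $Y\in\mathsf{Fac}\mathcal{T}$. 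Thus $\mathsf{Fac}\mathcal{T}$ is a full exact subcategory, and Definition \ref{def}(1) says precisely that each $T\in\mathcal{T}$ is Ext-projective in $\mathsf{Fac}\mathcal{T}$, i.e. projective in the exact category $\mathsf{Fac}\mathcal{T}$.

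It remains to show $\mathsf{Fac}\mathcal{T}$ has enough of these projectives, i.e. that every $X\in\mathsf{Fac}\mathcal{T}$ fits into a conflation $K\rightarrowtail T^{0}\twoheadrightarrow X$ with $T^{0}\in\mathcal{T}$ and $K\in\mathsf{Fac}\mathcal{T}$. The naive choice of an arbitrary $\mathcal{T}$-deflation onto $X$ does not suffice, since its kernel may leave $\mathsf{Fac}\mathcal{T}$; I would instead take $u\colon T^{0}\twoheadrightarrow X$ to be a deflation that is moreover a right $\mathcal{T}$-approximation, and prove $K=\ker u\in\mathsf{Fac}\mathcal{T}$ as follows. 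Choose a deflation $q\colon P\twoheadrightarrow K$ with $P\in\mathcal{P}$ and apply Definition \ref{def}(2) to $P$, obtaining a left $\mathcal{T}$-approximation $f\colon P\to\tilde T^{0}$ with a conflation $\tilde I\rightarrowtail\tilde T^{0}\twoheadrightarrow\tilde T^{1}$, $\tilde T^{1}\in\mathcal{T}$, and $f$ factoring as $P\twoheadrightarrow\tilde I\rightarrowtail\tilde T^{0}$. The composite $P\xrightarrow{q}K\rightarrowtail T^{0}$ factors through $f$, and, using that $u$ annihilates $K$ together with the right-approximation property and $\mathrm{Ext}^1_{\mathcal{E}}(\tilde T^{1},X)=0$, one produces a morphism $b\colon\tilde T^{0}\to K$ whose restriction to $\tilde I$ satisfies $(b|_{\tilde I})\circ(P\twoheadrightarrow\tilde I)=q$. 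Since $q$ is a deflation, Heller's cancellation (Lemma \ref{deflation}) forces $b|_{\tilde I}$, and then $b$ itself, to be a deflation; as $\tilde T^{0}\in\mathcal{T}$ this yields $K\in\mathsf{Fac}\mathcal{T}$.

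The main obstacle is exactly this last step: keeping the kernel inside $\mathsf{Fac}\mathcal{T}$ without assuming $\mathcal{T}$ to be contravariantly finite. This is what forces me to work with a deflation that is a right $\mathcal{T}$-approximation rather than an arbitrary one, and the delicate point is that Definition \ref{def}(2)—in particular the fact that the cokernel $\tilde T^{1}$ again lies in $\mathcal{T}$—is precisely what allows the approximation of $P$ to descend to a deflation onto $K$. Weak idempotent completeness enters crucially and repeatedly through Heller's cancellation, which is what certifies that the maps obtained by factoring through approximations are genuine deflations; without it neither the summand-closure in the first step nor the kernel control in the last step would go through.
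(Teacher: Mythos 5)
Your first two paragraphs are fine: the summand-closure via Lemma \ref{deflation}, the extension-closure via pullback along a $\mathcal{T}$-deflation and the splitting coming from ${\rm Ext}^{1}_{\mathcal{E}}(T'',X')=0$, and the identification of $\mathcal{T}$ with Ext-projectives all match what the paper does (the paper compresses the extension-closure into an appeal to the Horseshoe lemma).

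The gap is in the last step. You begin by choosing a deflation $u\colon T^{0}\twoheadrightarrow X$ that is \emph{also a right $\mathcal{T}$-approximation}, and everything afterwards (lifting $w\colon\tilde T^{1}\to X$ through $u$, hence producing $b\colon\tilde T^{0}\to K$) hinges on that approximation property. But nothing guarantees such a $u$ exists: $\mathcal{T}$ is not assumed contravariantly finite (the paper stresses this explicitly), and an object of $\mathsf{Fac}\mathcal{T}$ need not admit any right $\mathcal{T}$-approximation a priori. Worse, the existence of a deflation $T^{0}\twoheadrightarrow X$ which is a right $\mathcal{T}$-approximation is essentially a \emph{consequence} of the lemma: once $\mathsf{Fac}\mathcal{T}$ has enough projectives $\mathcal{T}$, the resulting conflation $D\rightarrowtail Z\twoheadrightarrow X$ with $D\in\mathsf{Fac}\mathcal{T}$ shows the deflation is an approximation (apply ${\rm Hom}_{\mathcal{E}}(\mathcal{T},-)$ and use ${\rm Ext}^{1}_{\mathcal{E}}(\mathcal{T},D)=0$). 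So your argument begs the question. The paper avoids this by starting from an \emph{arbitrary} conflation $Y\rightarrowtail T\twoheadrightarrow X$ with $T\in\mathcal{T}$ (whose kernel $Y$ may well leave $\mathsf{Fac}\mathcal{T}$) and then \emph{enlarging} the projective cover rather than controlling the kernel of a fixed one: it covers $Y$ by some $P\in\mathcal{P}$, uses the left $\mathcal{T}$-approximation $P\stackrel{f}\to T^{0}\twoheadrightarrow T^{1}$ and Heller cancellation to get a deflation $K={\rm Im}f\twoheadrightarrow Y$, pushes out to obtain $Y\rightarrowtail D\twoheadrightarrow T^{1}$ with $D\in\mathsf{Fac}\mathcal{T}$, and pushes out again to get $D\rightarrowtail Z\twoheadrightarrow X$, where $Z$ sits in a conflation $T\rightarrowtail Z\twoheadrightarrow T^{1}$ that splits because ${\rm Ext}^{1}_{\mathcal{E}}(T^{1},T)=0$; hence $Z\cong T\oplus T^{1}\in\mathcal{T}$ and $D\in\mathsf{Fac}\mathcal{T}$, which is the required conflation. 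You should replace your final step with an argument of this kind.
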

\begin{proof}
	$\mathsf{Fac}\mathcal{T}$ is a full exact subcategory by Horseshoe lemma.
	Clearly, objects in $\mathcal{T}$ are projective in exact category $\mathsf{Fac}\mathcal{T}$. For every $X\in \mathsf{Fac}\mathcal{T}$ we have a conflation 
	$$Y\stackrel{u}\rightarrowtail T\stackrel{v}\twoheadrightarrow X~\text{for some}~T\in \mathcal{T}.$$
	Because $\mathcal{E}$ has enough projectives, there is a deflation $g:P\twoheadrightarrow Y$ with $P\in \mathcal{P}$, and an exact sequence 
	$$P\stackrel{f}\rightarrow T^0\twoheadrightarrow T^1,~T^0, T^1\in \mathcal{T}$$ with $f$ a left $\mathcal{T}$-approximation.
	Decompose $f$ to a composition of a deflation $j$ and an inflation $i$, we obtain a commutative diagram
	\[\begin{tikzcd}
		P \arrow[r, "f"]\arrow[dd, bend right, "g" swap,two heads]
		\arrow[d, two heads, "j"] & T^0\arrow[r]\arrow[d,equal] & T^1 \arrow[d,equal]\\
		K \arrow[r, tail, "i"] \arrow[d, dashrightarrow, "a"] & T^{0} \arrow[r,two heads] \arrow[d, dashrightarrow, "b"] &T^{1} \arrow[d, dashrightarrow, "c"]\\
		Y \arrow[r,tail, "u"] & T \arrow[r,two heads, "v"] & X
	\end{tikzcd}\]
    where $b,c$ exist since $f$ is an approximation. This implies the existence of $a$. Since $ug=bf=bij=uaj$, then $g=aj$. Thus $a$ is a deflation by Lemma \ref{deflation}. Consider the pushout diagram 
    \[\begin{tikzcd}
    	K \arrow[r,tail, "i"] \arrow[d,two heads,"a" swap] \arrow[dr, phantom, "{\rm PO}"]
    	& T^{0} \arrow[r,two heads] \arrow[d,two heads] &T^{1} \arrow[d,equal]\\
    	Y \arrow[r,tail, "e"] & D \arrow[r,two heads, "l"] &T^{1},
    \end{tikzcd}\]
    we obtain $D\in \mathsf{Fac}\mathcal{T}$. Then consider the following diagram
    \[\begin{tikzcd}
    	Y \arrow[r,tail, "u"] \arrow[d,tail, "e" swap] \arrow[dr, phantom, "{\rm PO}"]
    	& T \arrow[r,two heads, "v"] \arrow[d,tail] &X \arrow[d,equal]\\
    	D \arrow[r,tail] \arrow[d,two heads, "l" swap] & Z \arrow[r,two heads] \arrow[d,two heads] &X\\
    	T^{1} \arrow[r,equal] & T^{1}. &
    \end{tikzcd}\]
    The mid-column is split since ${\rm Ext}_{\mathcal{E}}^{1}(T^1,T)=0$. Thus $Z\in\mathcal{T}$ is projective and the mid-row is a desired conflation.
\end{proof}

Let $\mathcal{T}_{1}$ and $ \mathcal{T}_{2}$ be two support $\tau$-tilting subcategories of $\mathcal{E}$. Define $\mathcal{T}_{1}\leq \mathcal{T}_{2}$ if  $\mathsf{Fac}\mathcal{T}_{1} \subseteq \mathsf{Fac}\mathcal{T}_{2}$.

\begin{cor}\label{poset}
	Assume $\mathcal{E}$ is weakly idempotent  complete. Then $\leq$ is a partial order on the collection of all support $\tau$-tilting subcategories.
\end{cor}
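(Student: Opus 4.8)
The plan is to check the three axioms of a partial order one at a time, observing that reflexivity and transitivity are purely formal while antisymmetry carries all the content. Reflexivity holds because $\mathsf{Fac}\mathcal{T} \subseteq \mathsf{Fac}\mathcal{T}$ trivially, so $\mathcal{T} \leq \mathcal{T}$. Transitivity follows at once from transitivity of set inclusion: if $\mathsf{Fac}\mathcal{T}_1 \subseteq \mathsf{Fac}\mathcal{T}_2$ and $\mathsf{Fac}\mathcal{T}_2 \subseteq \mathsf{Fac}\mathcal{T}_3$, then $\mathsf{Fac}\mathcal{T}_1 \subseteq \mathsf{Fac}\mathcal{T}_3$, whence $\mathcal{T}_1 \leq \mathcal{T}_3$. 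Neither requires any work beyond unwinding the definition of $\leq$.

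The substance is antisymmetry: I must show that $\mathsf{Fac}\mathcal{T}_1 = \mathsf{Fac}\mathcal{T}_2$ forces $\mathcal{T}_1 = \mathcal{T}_2$. The strategy is to recover $\mathcal{T}$ intrinsically from the exact category $\mathsf{Fac}\mathcal{T}$ as its subcategory of projective objects, so that equal $\mathsf{Fac}$'s must have equal $\mathcal{T}$'s. This is precisely where the weak idempotent completeness hypothesis enters, through Lemma \ref{enough proj T}: $\mathsf{Fac}\mathcal{T}$ is a full exact subcategory of $\mathcal{E}$, hence carries the restricted exact structure $\mathcal{S}|_{\mathsf{Fac}\mathcal{T}}$, and it has enough projectives $\mathcal{T}$; in particular every $T \in \mathcal{T}$ is projective in $\mathsf{Fac}\mathcal{T}$.

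To upgrade ``enough projectives $\mathcal{T}$'' to ``$\mathcal{T}$ is \emph{exactly} the class of projectives,'' I would argue as follows. Let $P$ be any projective object of $\mathsf{Fac}\mathcal{T}$. Since $\mathcal{T}$ provides enough projectives, there is a deflation $T \twoheadrightarrow P$ with $T \in \mathcal{T}$; as $P$ is projective, this deflation splits, exhibiting $P$ as a direct summand of $T$. Because $\mathcal{T}$ is additively closed, $P \in \mathcal{T}$. Thus the subcategory of projectives of the exact category $\mathsf{Fac}\mathcal{T}$ coincides with $\mathcal{T}$.

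Combining these observations finishes antisymmetry: since the exact structure on $\mathsf{Fac}\mathcal{T}$ is the restriction of the fixed ambient structure on $\mathcal{E}$, the equality $\mathsf{Fac}\mathcal{T}_1 = \mathsf{Fac}\mathcal{T}_2$ of subcategories of $\mathcal{E}$ entails equality of their induced exact structures, hence of their projective subcategories; by the previous step these are $\mathcal{T}_1$ and $\mathcal{T}_2$ respectively, so $\mathcal{T}_1 = \mathcal{T}_2$. The only step that I expect to demand genuine care is this recovery argument—confirming that the splitting-plus-additive-closedness reasoning really identifies $\mathcal{T}$ with the full class of projectives of $\mathsf{Fac}\mathcal{T}$—since everything else reduces to formal properties of inclusion.
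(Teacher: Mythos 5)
Your proof is correct and fills in exactly the argument the paper leaves as ``Clear'': reflexivity and transitivity are formal, and antisymmetry follows by recovering $\mathcal{T}$ as the full subcategory of projective objects of the exact category $\mathsf{Fac}\mathcal{T}$ via Lemma \ref{enough proj T} together with additive closure of $\mathcal{T}$. This recovery $\mathcal{T}={\rm P}(\mathsf{Fac}\mathcal{T})$ is precisely what the paper records shortly afterwards in Corollary \ref{T=Ext-proj}, so your route matches the intended one.
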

\begin{proof}
	Clear.
\end{proof}

We give a condition for a ($\tau$-rigid) subcategory $\mathcal{T}$:

\begin{center}
   {\bf (A)}: $\forall P\in \mathcal{P}$ there is a left $\mathcal{T}$-approximation $P\stackrel{f}\rightarrow T$ of $P$ with $f$ admissible. 
\end{center}

Support $\tau$-tilting subcategories are $\tau$-rigid subcategories satisfying (A). A $\tau$-rigid module $T$ in ${\rm mod}-\Lambda$ ($\Lambda$ is an artin algebra) satisfies (A) automatically (consider $\mathsf{add}T$-approximation instead). The following Proposition generalizes \cite[Theorem 1.1]{Liu and Zhou}.

\begin{prop}\label{inclusion}
	Let $\mathcal{T} \subseteq \mathcal{E}$ be a $\tau$-rigid subcategory satisfying (A). Then there exists a support $\tau$-tilting subcategory $\mathcal{T}^{'}$ such that $\mathcal{T} \subseteq \mathcal{T}^{'}$. 
\end{prop}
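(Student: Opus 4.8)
\textbf{Proof proposal for Proposition~\ref{inclusion}.}

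The plan is to construct $\mathcal{T}'$ by enlarging $\mathcal{T}$ with the data produced by condition (A), and then to verify it is support $\tau$-tilting. First I would, for each $P \in \mathcal{P}$, take the given admissible left $\mathcal{T}$-approximation $P \xrightarrow{f} T_P$ with $T_P \in \mathcal{T}$, and factor it as $P \twoheadrightarrow K_P \rightarrowtail T_P$ through a deflation followed by an inflation. The natural candidate for the enlargement is to adjoin to $\mathcal{T}$ the cokernels of these inflations: write the conflation $K_P \rightarrowtail T_P \twoheadrightarrow C_P$ and set $\mathcal{T}' := \mathsf{add}(\mathcal{T} \cup \{C_P : P \in \mathcal{P}\})$, which is additively closed by construction. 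The intuition, guided by \cite[Theorem 1.1]{Liu and Zhou}, is that these cokernels supply exactly the $T^1$-terms needed in Definition~\ref{def}(2), while the approximation property of $f$ is what forces the $\tau$-rigidity to be preserved.

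Next I would verify Definition~\ref{def}(2) for $\mathcal{T}'$. Given $P \in \mathcal{P}$, the composite $P \xrightarrow{f} T_P \twoheadrightarrow C_P$ (using the factorization above) yields an exact sequence $P \to T_P \twoheadrightarrow C_P$ with $T_P, C_P \in \mathcal{T}'$; I must then check that the map $P \to T_P$ is a left $\mathcal{T}'$-approximation and not merely a left $\mathcal{T}$-approximation. This is the step where the enlargement must be controlled carefully: a morphism $P \to T'$ with $T' \in \mathcal{T}'$ decomposes according to the direct-sum decomposition of $T'$ into a $\mathcal{T}$-part and summands of the $C_Q$, so it suffices to lift morphisms $P \to C_Q$ through $f$, which one does by using that $f$ is a left $\mathcal{T}$-approximation together with the deflation $T_Q \twoheadrightarrow C_Q$.

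The genuinely hard part will be establishing $\tau$-rigidity, i.e. Definition~\ref{def}(1): $\mathrm{Ext}^1_{\mathcal{E}}(\mathcal{T}', \mathsf{Fac}\,\mathcal{T}') = 0$. I would first show $\mathsf{Fac}\,\mathcal{T}' = \mathsf{Fac}\,\mathcal{T}$, since each $C_Q$ is a quotient of $T_Q \in \mathcal{T}$ so adding the $C_Q$ does not enlarge the class of factor objects. It then remains to prove $\mathrm{Ext}^1_{\mathcal{E}}(C_P, \mathsf{Fac}\,\mathcal{T}) = 0$ for each generator $C_P$. The strategy here is to apply $\mathrm{Hom}_{\mathcal{E}}(-, G)$ for $G \in \mathsf{Fac}\,\mathcal{T}$ to the conflation $K_P \rightarrowtail T_P \twoheadrightarrow C_P$ and read off the relevant long exact sequence; the key input is that $P$ is projective, so $\mathrm{Ext}^1_{\mathcal{E}}(K_P, G)$ embeds into $\mathrm{Ext}^2$-terms that vanish, and the connecting map out of $\mathrm{Hom}_{\mathcal{E}}(K_P, G)$ must be controlled using the approximation property of $f$. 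I expect the crux to be proving surjectivity of $\mathrm{Hom}_{\mathcal{E}}(T_P, G) \to \mathrm{Hom}_{\mathcal{E}}(K_P, G)$: since any $K_P \to G$ with $G \in \mathsf{Fac}\,\mathcal{T}$ factors through a deflation $T \twoheadrightarrow G$, one precomposes with $P \twoheadrightarrow K_P$ to get $P \to G$, lifts it to $P \to T$, and then invokes the left $\mathcal{T}$-approximation property of $f$ to factor through $T_P$, finally using weak idempotent completeness (Lemma~\ref{deflation}) to descend the factorization to $K_P$. Assembling these pieces gives the vanishing, and hence $\mathcal{T}'$ is the desired support $\tau$-tilting subcategory containing $\mathcal{T}$.
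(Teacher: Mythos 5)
Your proposal is correct, and its engine is exactly the computation in the paper's proof: one shows $\mathrm{Ext}^1_{\mathcal{E}}(C_P,G)=0$ for all $G\in\mathsf{Fac}\mathcal{T}$ via the long exact sequence of the conflation $K_P\rightarrowtail T_P\twoheadrightarrow C_P$, where the crux is surjectivity of $\mathrm{Hom}_{\mathcal{E}}(T_P,G)\to\mathrm{Hom}_{\mathcal{E}}(K_P,G)$, obtained by precomposing a map $K_P\to G$ with the deflation $P\twoheadrightarrow K_P$, lifting along a deflation $T''\twoheadrightarrow G$ by projectivity of $P$, factoring through the approximation $f$, and cancelling. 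The packaging differs from the paper's: there one sets $\mathcal{T}'={\rm P}(\mathsf{Fac}\mathcal{T})$, the full subcategory of Ext-projectives of the exact category $\mathsf{Fac}\mathcal{T}$, so that $\tau$-rigidity of $\mathcal{T}'$ is automatic and all the work goes into showing $C_P\in\mathcal{T}'$ (plus the observation that a left $\mathcal{T}$-approximation of $P$ is automatically a left $\mathsf{Fac}\mathcal{T}$-approximation, hence a left $\mathcal{T}'$-approximation); you instead take the minimal enlargement $\mathsf{add}(\mathcal{T}\cup\{C_P\})$, which makes Definition~\ref{def}(2) nearly free and shifts the work into verifying $\tau$-rigidity and $\mathsf{Fac}\mathcal{T}'=\mathsf{Fac}\mathcal{T}$. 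By Corollary~\ref{T=Ext-proj} the two subcategories coincide, so nothing is lost either way. Two small inaccuracies that do not affect correctness: descending the factorization from $P$ to $K_P$ uses only that the deflation $P\twoheadrightarrow K_P$ is an epimorphism, not weak idempotent completeness or Lemma~\ref{deflation} (the proposition carries no such hypothesis, and neither proof needs it); and the remark that $\mathrm{Ext}^1_{\mathcal{E}}(K_P,G)$ embeds into vanishing $\mathrm{Ext}^2$-terms is superfluous, since the long exact sequence already yields $\mathrm{Ext}^1_{\mathcal{E}}(C_P,G)=0$ from surjectivity of $i^*$ together with $\mathrm{Ext}^1_{\mathcal{E}}(T_P,G)=0$, which is the $\tau$-rigidity of $\mathcal{T}$.
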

\begin{proof}
	Since $\mathcal{T}$ is $\tau$-rigid and by Horseshoe Lemma, $\mathsf{Fac}\mathcal{T}$ is a full exact subcategory of $\mathcal{E}$. Let $\mathcal{T}^{'}={\rm P}(\mathsf{Fac}\mathcal{T})$. Clearly we have $\mathcal{T} \subseteq \mathcal{T}^{'}$ and ${\rm Ext}^{1}_{\mathcal{E}}(\mathcal{T}^{'},\mathsf{Fac}\mathcal{T}^{'})=0$. For each $P\in \mathcal{P}$, there is an exact sequence (by (A))
	$$P\stackrel{f} \rightarrow T \twoheadrightarrow T^{'},~T\in \mathcal{T}$$ 
	with $f$ a left $\mathcal{T}$-approximation. By decomposing $f$ as $P \stackrel{d}\twoheadrightarrow K \stackrel{i}\rightarrowtail T$ we obtain a conflation 
	$$K \stackrel{i}\rightarrowtail T \twoheadrightarrow T^{'}.$$ 
	For any $S\in \mathsf{Fac}\mathcal{T}$, apply ${\rm Hom}_{\mathcal{E}}(-,S)$ to the last conflation then we obtain an exact sequence 
	\[0\rightarrow {\rm Hom}_{\mathcal{E}}(T',S) \rightarrow {\rm Hom}_{\mathcal{E}}(T,S) \stackrel{i^{*}}\rightarrow {\rm Hom}_{\mathcal{E}}(K,S) \rightarrow {\rm Ext}_{\mathcal{E}}^{1}(T',S) \rightarrow {\rm Ext}_{\mathcal{E}}^{1}(T,S)\]
	Since $f$ is also a left $\mathsf{Fac}\mathcal{T}$-approximation, so is $i$. Thus $i^{*}$ is surjective. Because ${\rm Ext}_{\mathcal{E}}^{1}(T,S)=0$, ${\rm Ext}_{\mathcal{E}}^{1}(T',S)=0$ for all $S\in \mathsf{Fac}\mathcal{T}$. Thus $T^{'}\in {\rm P}(\mathsf{Fac}\mathcal{T})=\mathcal{T}^{'}$. Hence $\mathcal{T}^{'}$ is a support $\tau$-tilting subcategory.
\end{proof}

\begin{cor}\label{T=Ext-proj}
	A $\tau$-rigid subcategory $\mathcal{T}$ satisfying (A) is a support $\tau$-tilting subcategory if and only if $\mathcal{T}={\rm P}(\mathsf{Fac}\mathcal{T})$.
\end{cor}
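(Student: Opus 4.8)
The plan is to establish the two implications separately; the ``if'' direction is immediate from Proposition~\ref{inclusion}, while the ``only if'' direction is where the work lies and relies on Lemma~\ref{enough proj T}.

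For the ``if'' direction, suppose $\mathcal{T}={\rm P}(\mathsf{Fac}\mathcal{T})$. Since $\mathcal{T}$ is $\tau$-rigid and satisfies (A), I would invoke Proposition~\ref{inclusion}, whose proof exhibits the support $\tau$-tilting subcategory $\mathcal{T}'={\rm P}(\mathsf{Fac}\mathcal{T})$ with $\mathcal{T}\subseteq\mathcal{T}'$. Under the present hypothesis we have $\mathcal{T}=\mathcal{T}'$, so $\mathcal{T}$ is itself support $\tau$-tilting.

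For the ``only if'' direction, suppose $\mathcal{T}$ is support $\tau$-tilting. First I record the inclusion $\mathcal{T}\subseteq {\rm P}(\mathsf{Fac}\mathcal{T})$, valid for any $\tau$-rigid subcategory: each $T\in\mathcal{T}$ lies in $\mathsf{Fac}\mathcal{T}$ via ${\rm id}_{T}$, and $\tau$-rigidity means exactly ${\rm Ext}^{1}_{\mathcal{E}}(T,\mathsf{Fac}\mathcal{T})=0$, i.e.\ $T$ is Ext-projective in $\mathsf{Fac}\mathcal{T}$. For the reverse inclusion, take $Q\in {\rm P}(\mathsf{Fac}\mathcal{T})$. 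Applying Lemma~\ref{enough proj T} to $Q\in\mathsf{Fac}\mathcal{T}$ produces a conflation $D\rightarrowtail Z\twoheadrightarrow Q$ with $Z\in\mathcal{T}$ and $D\in\mathsf{Fac}\mathcal{T}$, namely the conflation built in that lemma witnessing that $\mathcal{T}$ provides enough projectives for $\mathsf{Fac}\mathcal{T}$. Since $Q\in {\rm P}(\mathsf{Fac}\mathcal{T})$ and $D\in\mathsf{Fac}\mathcal{T}$, we get ${\rm Ext}^{1}_{\mathcal{E}}(Q,D)=0$, so the conflation splits and $Q$ becomes a direct summand of $Z\in\mathcal{T}$. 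As $\mathcal{T}$ is additively closed, $Q\in\mathcal{T}$, giving ${\rm P}(\mathsf{Fac}\mathcal{T})\subseteq\mathcal{T}$ and hence equality.

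The crux, and the only step beyond formal bookkeeping, is this reverse inclusion: a naive covering deflation $T\twoheadrightarrow Q$ with $T\in\mathcal{T}$ need not have its kernel in $\mathsf{Fac}\mathcal{T}$, so one cannot directly use the Ext-projectivity of $Q$ to split it. Lemma~\ref{enough proj T} repairs exactly this by producing a covering conflation whose kernel $D$ stays inside $\mathsf{Fac}\mathcal{T}$, at which point Ext-projectivity forces the splitting. I would also note that this invocation carries the weak idempotent completeness hypothesis underlying Lemma~\ref{enough proj T}.
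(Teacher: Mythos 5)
Your proof is correct and is essentially the argument the paper intends: its own proof of Corollary~\ref{T=Ext-proj} is just ``Clear.'', with the ``if'' direction read off from the proof of Proposition~\ref{inclusion} (which exhibits ${\rm P}(\mathsf{Fac}\mathcal{T})$ as support $\tau$-tilting) and the ``only if'' direction from Lemma~\ref{enough proj T} exactly as you describe. Your closing remark is also well taken: the splitting argument genuinely needs the covering conflation with kernel in $\mathsf{Fac}\mathcal{T}$ supplied by Lemma~\ref{enough proj T}, so the weak idempotent completeness hypothesis of that lemma is tacitly in force here even though the corollary's statement omits it.
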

\begin{proof}
	Clear.
\end{proof}

The above result provides us a description of support $\tau$-tilting modules in ${\rm mod}-\Lambda$. For a module $T$, if there is a full exact subcategory $\mathcal{E}$ of ${\rm mod}-\Lambda$  having enough projectives $\mathsf{add}T$, then $\mathcal{E}\subseteq \mathsf{Fac}T$. $\mathsf{Fac}T$ is such a subcategory if and only if $T$ is a support $\tau$-tilting module.

Now we prove the main result of this section which generalizes \cite[Theorem 1.2]{Liu and Zhou} and \cite[Theorem 5.7]{AST}. The latter generalizes \cite[Theorem 2.29]{BBOS} and \cite[Theorem 4.6]{BZ}.

\begin{thm}\label{bijections}
	Asuume $\mathcal{E}$ is weakly idempotent complete. Then there are mutually inverse bijections: 
	\begin{align*}
		\{support~\tau\text{-tilting subcategories}\} & \leftrightarrow\{\tau\text{-cotorsion pair}~(\mathcal{C},\mathcal{D})~|~\mathcal{D}~\text{is a torsion class}\}\\
		\mathcal{T}& \mapsto (^{\bot_{1}}\mathsf{Fac}\mathcal{T},\mathsf{Fac}\mathcal{T})\\
		\mathcal{C} \cap \mathcal{D}& \mapsfrom (\mathcal{C},\mathcal{D}).
	\end{align*}
    Moreover the bijections restrict to bijections $$\{\text{tilting subcategories}\} \leftrightarrow \{\text{cotorsion pairs}~(\mathcal{C},\mathcal{D})~|~\mathcal{D}~\text{is a torsion class}\}.$$
\end{thm}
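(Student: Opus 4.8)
The plan is to verify that the two assignments $\Phi\colon\mathcal{T}\mapsto({}^{\bot_1}\mathsf{Fac}\mathcal{T},\mathsf{Fac}\mathcal{T})$ and $\Psi\colon(\mathcal{C},\mathcal{D})\mapsto\mathcal{C}\cap\mathcal{D}$ are well defined and mutually inverse, and then to refine this to the tilting/cotorsion case. This splits into four checks: (i) $\Phi$ lands in the stated target; (ii) $\Psi$ lands among support $\tau$-tilting subcategories; (iii) $\Psi\Phi=\mathrm{id}$; (iv) $\Phi\Psi=\mathrm{id}$. For (i), since $\mathcal{T}$ is $\tau$-rigid the Horseshoe argument of Lemma \ref{enough proj T} makes $\mathsf{Fac}\mathcal{T}$ extension closed, and it is visibly factor closed, hence a torsion class. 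Condition (1) of a $\tau$-cotorsion pair holds by construction, and for (2) I take the sequence $P\xrightarrow{f}T^0\twoheadrightarrow T^1$ of Definition \ref{def}(2): here $\tau$-rigidity gives $T^0\in\mathcal{T}\subseteq{}^{\bot_1}\mathsf{Fac}\mathcal{T}\cap\mathsf{Fac}\mathcal{T}$ and $T^1\in\mathcal{T}\subseteq{}^{\bot_1}\mathsf{Fac}\mathcal{T}$, while $f$ upgrades from a left $\mathcal{T}$-approximation to a left $\mathsf{Fac}\mathcal{T}$-approximation, because any $P\to D$ with $D\in\mathsf{Fac}\mathcal{T}$ lifts along a deflation $T\twoheadrightarrow D$ (projectivity of $P$) and then factors through $f$. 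For (ii), with $\mathcal{T}=\mathcal{C}\cap\mathcal{D}$: it is additively closed (both ${}^{\bot_1}\mathcal{D}$ and the torsion class $\mathcal{D}$ are), the inclusion $\mathcal{T}\subseteq\mathcal{D}$ plus factor closedness gives $\mathsf{Fac}\mathcal{T}\subseteq\mathcal{D}$ whence ${\rm Ext}^1_{\mathcal{E}}(\mathcal{T},\mathsf{Fac}\mathcal{T})\subseteq{\rm Ext}^1_{\mathcal{E}}(\mathcal{T},\mathcal{D})=0$, and the defining sequence $P\xrightarrow{f}D\twoheadrightarrow C$ of the $\tau$-cotorsion pair already has $D\in\mathcal{T}$ and $C\in\mathcal{C}\cap\mathcal{D}=\mathcal{T}$ (as $C$ is a factor of $D\in\mathcal{D}$), with $f$ a left $\mathcal{T}$-approximation since $\mathcal{T}\subseteq\mathcal{D}$.

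Statement (iii) is essentially Corollary \ref{T=Ext-proj}: by Lemma \ref{enough proj T} the exact category $\mathsf{Fac}\mathcal{T}$ has enough projectives $\mathcal{T}$, and its Ext-projectives ${\rm P}(\mathsf{Fac}\mathcal{T})$ are exactly ${}^{\bot_1}\mathsf{Fac}\mathcal{T}\cap\mathsf{Fac}\mathcal{T}$; a support $\tau$-tilting subcategory equals ${\rm P}(\mathsf{Fac}\mathcal{T})$, so $\mathcal{T}={}^{\bot_1}\mathsf{Fac}\mathcal{T}\cap\mathsf{Fac}\mathcal{T}$. The crux of the main equivalence is (iv). Writing $\mathcal{T}=\mathcal{C}\cap\mathcal{D}$, the inclusion $\mathsf{Fac}\mathcal{T}\subseteq\mathcal{D}$ is as in (ii); for the reverse inclusion I take $X\in\mathcal{D}$, choose a deflation $P\twoheadrightarrow X$ with $P\in\mathcal{P}$, and use that the left $\mathcal{D}$-approximation $f\colon P\to D_0$ furnished by the $\tau$-cotorsion pair absorbs $P\twoheadrightarrow X$ as $P\xrightarrow{f}D_0\xrightarrow{h}X$. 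This is exactly where weak idempotent completeness is indispensable: since $hf$ is a deflation, Heller's cancellation (Lemma \ref{deflation}) forces $h$ to be a deflation, so $X$ is a factor of $D_0\in\mathcal{T}$ and $X\in\mathsf{Fac}\mathcal{T}$. Hence $\mathsf{Fac}\mathcal{T}=\mathcal{D}$, and ${}^{\bot_1}\mathsf{Fac}\mathcal{T}={}^{\bot_1}\mathcal{D}=\mathcal{C}$ by the defining property of a $\tau$-cotorsion pair.

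For the ``moreover'' refinement I would show $\mathcal{T}$ is tilting if and only if $\Phi(\mathcal{T})$ is a (complete) cotorsion pair; since $\Phi,\Psi$ are already inverse, and since complete cotorsion pairs with $\mathcal{D}$ a torsion class are in particular $\tau$-cotorsion pairs (the remark following their definition), this yields the restricted bijection. The easy direction: if $(\mathcal{C},\mathcal{D})$ is a complete cotorsion pair with $\mathcal{D}$ a torsion class, then for $P\in\mathcal{P}$ the conflation $P\rightarrowtail D'\twoheadrightarrow C'$ has $D'\in\mathcal{C}\cap\mathcal{D}=\mathcal{T}$ (as $P,C'\in\mathcal{C}$ and $\mathcal{C}$ is extension closed) and $C'\in\mathcal{T}$ (factor of $D'$), with $P\rightarrowtail D'$ an inflation and a left $\mathcal{T}$-approximation; Lemma \ref{criterion} then makes $\mathcal{T}=\mathcal{C}\cap\mathcal{D}$ tilting. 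The hard part, and the main obstacle of the whole proof, is the converse: producing for a tilting $\mathcal{T}$ the two approximation conflations of a complete cotorsion pair for $(\mathcal{C},\mathcal{D})=({}^{\bot_1}\mathsf{Fac}\mathcal{T},\mathsf{Fac}\mathcal{T})$ for \emph{every} object $X$, given only the conflations $P\xrightarrow{\iota}T^0\twoheadrightarrow T^1$ for projectives supplied by Lemma \ref{criterion}.

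Here the delicate point is the special $\mathcal{D}$-preenvelope: the naive pushout of the tilting sequence along $P\twoheadrightarrow X$ yields an extension of $T^1$ by $X$ that need \emph{not} lie in $\mathcal{D}$, so I would instead take a deflation $P\twoheadrightarrow X$ with kernel $\Omega X\rightarrowtail P$ and form the cokernel $C''$ of the composite inflation $\Omega X\rightarrowtail P\xrightarrow{\iota}T^0$. Then $C''$ is a factor of $T^0$, so $C''\in\mathsf{Fac}\mathcal{T}=\mathcal{D}$, and the standard comparison of cokernels for a composite of inflations produces a conflation $X\rightarrowtail C''\twoheadrightarrow T^1$ with $T^1\in\mathcal{T}\subseteq\mathcal{C}$ — precisely a special $\mathcal{D}$-preenvelope. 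For the special $\mathcal{C}$-precover I apply this preenvelope to $\Omega X$, getting $\Omega X\rightarrowtail D_1\twoheadrightarrow C_1$ with $D_1\in\mathcal{D}$ and $C_1\in\mathcal{C}$, and push out $\Omega X\rightarrowtail P$ along $\Omega X\rightarrowtail D_1$; the pushout $C$ sits in a conflation $P\rightarrowtail C\twoheadrightarrow C_1$ forcing $C\in\mathcal{C}$ (as $\mathcal{P}\subseteq\mathcal{C}$ and $\mathcal{C}$ is extension closed) and in a conflation $D_1\rightarrowtail C\twoheadrightarrow X$, which is the desired precover. Together with conditions (1) and (2) of a cotorsion pair (both immediate from $\mathcal{C}={}^{\bot_1}\mathcal{D}$ and $\mathcal{D}$ being a torsion class), this completes the refinement.
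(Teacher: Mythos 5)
Your proof is correct and follows essentially the same route as the paper: the same upgrade of $f$ to a left $\mathsf{Fac}\mathcal{T}$-approximation, the same use of Lemma \ref{enough proj T} and Corollary \ref{T=Ext-proj} for $\Psi\Phi=\mathrm{id}$, the same Heller-cancellation argument for $\mathcal{D}=\mathsf{Fac}(\mathcal{C}\cap\mathcal{D})$, and the same two conflation constructions for the tilting/cotorsion refinement. The one quibble is your claim that the ``naive pushout'' of $P\rightarrowtail T^{0}\twoheadrightarrow T^{1}$ along $P\twoheadrightarrow X$ might produce a middle term outside $\mathcal{D}$: pushing out an inflation along a deflation forces the parallel map $T^{0}\rightarrow D$ to be a deflation, so $D\in\mathsf{Fac}\mathcal{T}$ automatically, and your cokernel-of-the-composite-inflation construction yields exactly this pushout --- which is precisely what the paper does.
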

\begin{proof}
	If $\mathcal{T}$ is a support $\tau$-tilting subcategory, for every $P\in \mathcal{P}$ there is an exact sequence 
	$$P\stackrel{f} \rightarrow T^{0}\twoheadrightarrow T^{1},~T^{0},T^{1}\in \mathcal{T}$$ 
	with $f$ a left $\mathcal{T}$-approximation. Indeed $f$ is also a left $\mathsf{Fac}\mathcal{T}$-approximation, so $(^{\bot_{1}}\mathsf{Fac}\mathcal{T},\mathsf{Fac}\mathcal{T})$ is a $\tau$-cotorsion pair with $\mathsf{Fac}\mathcal{T}$ a torsion class (see Lemma \ref{enough proj T}). 
	
	If $(\mathcal{C},\mathcal{D})$ is a $\tau$-cotorsion pair with $\mathcal{D}$ a torsion class, it is obvious that $\mathcal{C} \cap \mathcal{D}$ is additively closed and ${\rm Ext}_{\mathcal{E}}^{1}(\mathcal{C} \cap \mathcal{D},\mathsf{Fac}(\mathcal{C} \cap \mathcal{D}))=0$. For every $P\in \mathcal{P}$ there is an exact sequence 
	$$P\stackrel{f} \rightarrow D\twoheadrightarrow C,~D\in \mathcal{C} \cap \mathcal{D}, C\in \mathcal{C}$$ 
	with $f$ a left $\mathcal{D}$-approximation. Because $\mathcal{D}$ is factor closed, $C\in \mathcal{D}$. Thus $\mathcal{C} \cap \mathcal{D}$ is a support $\tau$-tilting subcategory.
	
	To prove the two maps are mutually inverse bijections, by Lemma \ref{enough proj T}, it suffices to show $\mathcal{D}=\mathsf{Fac}(\mathcal{C} \cap \mathcal{D})$. Let $X\in \mathcal{D}$ and $P\twoheadrightarrow X$ be a deflation with $P\in \mathcal{P}$. Then there is a commutative diagram
	\[\begin{tikzcd}
		P \arrow[r,"f"] \arrow[d,two heads] & D \arrow[dl,"g",dashed] \arrow[r,two heads] & C\\
		X&&
	\end{tikzcd}\]
	where the first row exists by definition of $\tau$-cotorsion pair and $g$ exists since $f$ is an approximation. Moreover $g$ is a deflation by Lemma \ref{deflation}, hence $X\in \mathsf{Fac}(\mathcal{C} \cap \mathcal{D})$. The inverse inclusion is obvious.
	
	Assume $\mathcal{T}$ is a tilting subcategory. For any $X\in \mathcal{E}$ we have a deflation $P\stackrel{a}\twoheadrightarrow X$ for some $P\in \mathcal{P}$ and then a conflation 
	$$P\rightarrowtail T^{0}\twoheadrightarrow T^{1},~T^{0},T^{1}\in \mathcal{T}.$$ 
	Consider the pushout diagram
	\[\begin{tikzcd}
		P \arrow[r,tail] \arrow[d,two heads,"a" swap] \arrow[dr, phantom, "{\rm PO}"]
		& T^{0} \arrow[r,two heads] \arrow[d,two heads] &T^{1} \arrow[d,equal]\\
		X \arrow[r,tail] & D \arrow[r,two heads] &T^{1},
	\end{tikzcd}\]
    we obtain a conflation $X\rightarrowtail D\twoheadrightarrow T^{1}$ where $D\in \mathsf{Fac}\mathcal{T}$ and $T^{1}\in \mathcal{T}$. Consider the pushout diagram
    \[\begin{tikzcd}
    	Y \arrow[r,tail] \arrow[d,tail] \arrow[dr, phantom, "{\rm PO}"]
    	& P \arrow[r,two heads,"a"] \arrow[d,tail] & X \arrow[d,equal] \\
    	D^{'} \arrow[r,tail] \arrow[d,two heads] & C \arrow[r,two heads] \arrow[d,two heads] & X \\
    	T^{'} \arrow[r,equal] & T^{'}, &
    \end{tikzcd}\]
    where the first column is obtained as above. Since $P,T^{'}\in {^{\bot_{1}}\mathsf{Fac}\mathcal{T}}$, then $C\in {^{\bot_{1}}\mathsf{Fac}\mathcal{T}}$. Thus the second row implies X admits a conflation with $D'\in \mathsf{Fac}\mathcal{T},~C\in {^{\bot_{1}}\mathsf{Fac}\mathcal{T}}$. Therefore $(^{\bot_{1}}\mathsf{Fac}\mathcal{T},\mathsf{Fac}\mathcal{T})$ is a cotorsion pair.

    If $(\mathcal{C},\mathcal{D})$ is a cotorsion pair such that $\mathcal{D}$ is a torsion class, by the second paragraph and Lemma \ref{criterion}, $\mathcal{C} \cap \mathcal{D}$ is a tilting subcategory.
\end{proof}

We call a support $\tau$-tilting subcategory $\mathcal{T}$ {\em admissibly contravariantly finite} (a.c.f. for short) in $\mathcal{E}$ if for every $X\in \mathcal{E}$ there is an admissible right $\mathcal{T}$-approximation.

\begin{cor}\label{direct generalization}
	The bijections in Theorem \ref{bijections} restrict to bijections 
	\begin{align*}
		\{\text{{\rm a.c.f.} support}~\tau\text{-tilting subcategories}\} & \leftrightarrow \{\tau\text{-cotorsion torsion triples}\}\\
		\mathcal{T} & \mapsto (^{\bot_{1}}\mathsf{Fac}\mathcal{T},\mathsf{Fac}\mathcal{T},\mathcal{T}^{\bot_{0}})\\
		\mathcal{C} \cap \mathcal{D} & \mapsfrom (\mathcal{C},\mathcal{D},\mathcal{F})
	\end{align*}
	and
	\[\{\text{{\rm a.c.f.} tilting subcategories}\}\leftrightarrow \{\text{cotorsion torsion triples}\}.\]
\end{cor}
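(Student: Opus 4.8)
The statement is a refinement of Theorem~\ref{bijections}: a $\tau$-cotorsion torsion triple $(\mathcal{C},\mathcal{D},\mathcal{F})$ is precisely a $\tau$-cotorsion pair $(\mathcal{C},\mathcal{D})$ with $\mathcal{D}$ a torsion class \emph{together with} a torsion pair $(\mathcal{D},\mathcal{F})$, and Theorem~\ref{bijections} already matches support $\tau$-tilting subcategories $\mathcal{T}$ with the former datum via $\mathcal{D}=\mathsf{Fac}\mathcal{T}$. So the plan is to reduce everything to the single equivalence
\[
\mathcal{T}\ \text{is a.c.f.}\qquad\Longleftrightarrow\qquad (\mathsf{Fac}\mathcal{T},\mathcal{T}^{\bot_{0}})\ \text{is a torsion pair.}
\]
First I would record that $\mathcal{T}^{\bot_{0}}=(\mathsf{Fac}\mathcal{T})^{\bot_{0}}$: the inclusion $\supseteq$ is $\mathcal{T}\subseteq\mathsf{Fac}\mathcal{T}$, while $\subseteq$ follows because every object of $\mathsf{Fac}\mathcal{T}$ is a deflation-quotient of an object of $\mathcal{T}$. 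Consequently the torsion-free class occurring in the triple is forced to be $\mathcal{F}=\mathcal{T}^{\bot_{0}}$, and the orthogonality ${\rm Hom}_{\mathcal{E}}(\mathsf{Fac}\mathcal{T},\mathcal{F})=0$ is automatic; only the existence of the conflation decomposition needs to be produced in each direction.

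For the forward implication, given $X\in\mathcal{E}$ I would take an admissible right $\mathcal{T}$-approximation $g\colon T\to X$ and factor it as a deflation $d\colon T\twoheadrightarrow K$ followed by an inflation $i\colon K\rightarrowtail X$; then $K\in\mathsf{Fac}\mathcal{T}$ and $i$ completes to a conflation $K \stackrel{i}{\rightarrowtail} X \stackrel{p}{\twoheadrightarrow} F$. The crux is to show $F\in\mathcal{T}^{\bot_{0}}$. Given $\chi\colon T'\to F$ with $T'\in\mathcal{T}$, I would pull the conflation back along $\chi$ to get a conflation $K\rightarrowtail E\twoheadrightarrow T'$; since ${\rm Ext}_{\mathcal{E}}^{1}(T',K)=0$ by $\tau$-rigidity (as $K\in\mathsf{Fac}\mathcal{T}$), it splits, and a splitting of it, composed with the pullback map $E\to X$, yields a lift $\tilde{\chi}\colon T'\to X$ with $p\tilde{\chi}=\chi$. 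Because $g$ is an approximation, $\tilde{\chi}=g\eta$ for some $\eta$, whence $\chi=p\tilde{\chi}=pg\eta=p\,i\,d\,\eta=0$ using $pi=0$. Thus $F\in\mathcal{T}^{\bot_{0}}=\mathcal{F}$ and $(\mathsf{Fac}\mathcal{T},\mathcal{F})$ is a torsion pair. This splitting-and-lifting step, where the approximation property and $\tau$-rigidity are combined, is the main obstacle.

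For the converse, starting from the triple I would decompose each $X$ by the torsion pair as $D\rightarrowtail X \stackrel{p}{\twoheadrightarrow} F$ with $D\in\mathsf{Fac}\mathcal{T}$ and $F\in\mathcal{F}$, then choose a conflation $D'\rightarrowtail T \stackrel{q}{\twoheadrightarrow} D$ \emph{inside} the exact category $\mathsf{Fac}\mathcal{T}$ with $T\in\mathcal{T}$ (enough projectives by Lemma~\ref{enough proj T}), and set $g$ to be the admissible composite $T \stackrel{q}{\twoheadrightarrow} D\rightarrowtail X$. To see $g$ is a right $\mathcal{T}$-approximation, take $\psi\colon T'\to X$ with $T'\in\mathcal{T}\subseteq\mathsf{Fac}\mathcal{T}$: then $p\psi=0$ since $F\in(\mathsf{Fac}\mathcal{T})^{\bot_{0}}$, so $\psi$ factors through the kernel $D$, and projectivity of $T'$ in $\mathsf{Fac}\mathcal{T}$ lets this factor further through $q$, hence through $g$. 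This gives a.c.f. Together with Theorem~\ref{bijections}, the two constructions show the correspondence restricts as claimed and that the third components match. Finally, the tilting statement follows verbatim: tilting subcategories are already matched with (complete) cotorsion pairs $(\mathcal{C},\mathcal{D})$ having $\mathcal{D}$ a torsion class in Theorem~\ref{bijections}, and the equivalence above between a.c.f.\ and the existence of the torsion pair $(\mathcal{D},\mathcal{F})$ does not depend on completeness of the cotorsion pair.
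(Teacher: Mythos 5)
Your proposal is correct and follows essentially the same route as the paper: in both directions the key conflation is built from the image/cokernel factorization of an admissible right $\mathcal{T}$-approximation (forward) and from composing the torsion-pair inflation with a deflation from $\mathcal{T}$ supplied by Lemma~\ref{enough proj T} (backward), with $\tau$-rigidity plus the approximation property forcing the cokernel into $\mathcal{T}^{\bot_{0}}$. Your pullback-and-splitting argument is just an explicit unwinding of the paper's one-line ``apply ${\rm Hom}_{\mathcal{E}}(\mathcal{T},-)$'' step, so no substantive difference.
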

\begin{proof}
	If $\mathcal{T}$ is an a.c.f. support $\tau$-tilting subcategory of $\mathcal{E}$, it suffices to check $(\mathsf{Fac}\mathcal{T},\mathcal{T}^{\bot_{0}})$ is a torsion pair. For each $X\in \mathcal{E}$, there is an admissible right $\mathcal{T}$-approximation $T\stackrel{f}\rightarrow X$. Thus we have a conflation 
	$${\rm Im}f\rightarrowtail X\twoheadrightarrow {\rm coker}f$$ 
	with ${\rm Im}f\in \mathsf{Fac}\mathcal{T}$. Apply ${\rm Hom}_{\mathcal{E}}(\mathcal{T},-)$ to it and we deduce ${\rm Hom}_{\mathcal{E}}(\mathcal{T},{\rm coker}f)=0$. Hence ${\rm coker}f\in \mathcal{T}^{\bot_{0}}$.
	
	Conversely, if $(\mathcal{C},\mathcal{D},\mathcal{F})$ is a $\tau$-cotorsion torsion triple, then $\mathcal{D}$ is a torsion class. Therefore there is a support $\tau$-tilting subcategory $\mathcal{T}$ of $\mathcal{E}$ such that $(\mathcal{C},\mathcal{D},\mathcal{F})=(^{\bot_{1}}\mathsf{Fac}\mathcal{T},\mathsf{Fac}\mathcal{T},\mathcal{T}^{\bot_{0}})$. Let $X\in \mathcal{E}$, because $(\mathsf{Fac}\mathcal{T},\mathcal{T}^{\bot_{0}})$ is a torsion pair, we have a conflation 
	\[Y\stackrel{i}\rightarrowtail X\twoheadrightarrow Z\]
	with $Y\in \mathsf{Fac}\mathcal{T},~Z\in \mathcal{T}^{\bot_{0}}$ and $i$ a right $\mathsf{Fac}\mathcal{T}$-approximation. By Lemma \ref{enough proj T}, for $Y\in \mathsf{Fac}\mathcal{T}$, we have a conflation 
	\[Y'\rightarrowtail T\stackrel{d}\twoheadrightarrow Y\]
	with $Y'\in \mathsf{Fac}\mathcal{T},~T\in \mathcal{T}$ and $d$ a right $\mathcal{T}$-approximation. Thus $i\circ d:T\rightarrow X$ is an admissible right $\mathcal{T}$-approximation and $\mathcal{C} \cap \mathcal{D}=\mathcal{T}$ is a.c.f.
	
	The second bijection is obvious.
\end{proof}

\begin{rem}
	From Corollary \ref{direct generalization}, we see that for any $\tau$-cotorsion torsion triple $(\mathcal{C},\mathcal{D},\mathcal{F})$, $\mathcal{C} \cap \mathcal{D}$ is a.c.f. Thus condition (3) of \cite[Definition 4.1]{AST} is redundant in a sense. If $\mathcal{E}$ is an abelian category, Corollary \ref{direct generalization} is just \cite[Theorem 5.7]{AST}. In a general exact category, however we do not know if the mid-term of a $\tau$-cotorsion torsion triple is functorially finite (it is true in abelian categories, see \cite[Corollary 4.8]{AST}).
\end{rem}

\section{Restrict to be tilting subcategories}

Throughout this section, we assume that $\mathcal{E}=(\mathcal{A},\mathcal{S})$ is a skeletally small exact category unless otherwise specified.

\subsection{Restrict to a subcategory}

In ${\rm mod}-\Lambda$ ($\Lambda$ is an artin algebra), for a support $\tau$-tilting module $T$, we can find a quotient algebra $\Lambda/I$ such that $T\in {\rm mod}-\Lambda/I$ is tilting. We want to construct a similar subcategory $\mathcal{E}_{\mathcal{T}}$ for a support $\tau$-tilting subcategory $\mathcal{T}$. 

Let $\mathcal{E}=(\mathcal{A},\mathcal{S})$ be an exact category with enough projectives $\mathcal{P}$, $\mathcal{T}$ be a $\tau$-rigid subcategory satisfying (A). Recall that we have a functor
\begin{align*}
 \mathbb{P}:~\mathcal{E}&\longrightarrow {\rm mod}_{\infty}-\mathcal{P} \subseteq {\rm Mod}-\mathcal{P}\\
 X&\longmapsto \mathcal{E}(-,X)|_{\mathcal{P}}
\end{align*}
which is fully faithful (see Lemma \ref{Enomoto}). Let $\mathcal{I}=\{f\in \mathcal{P}~|~\mathcal{E}(f,\mathcal{T})=0\}$, it is an ideal of the additive category $\mathcal{P}$. Denote by $\overline{\mathcal{P}}$ the quotient category $\mathcal{P}/\mathcal{I}$ and by $\pi$ the canonical quotient functor $\mathcal{P}\rightarrow \overline{\mathcal{P}}$. It is known that there is an adjoint pair (cf. for example \cite[Section 3]{Auslander})
\begin{align*}
-\otimes_{\mathcal{P}}\overline{\mathcal{P}}:{\rm Mod}-\mathcal{P}&\longrightarrow {\rm Mod}-\overline{\mathcal{P}}\\
X&\longmapsto (Q\mapsto X\otimes_{\mathcal{P}}\overline{\mathcal{P}}(Q,\pi(-)))\\
(-)\circ \pi:{\rm Mod}-\overline{\mathcal{P}}&\longrightarrow {\rm Mod}-\mathcal{P}\\
Y&\longmapsto Y\circ \pi.
\end{align*}
Let $\epsilon$ and $\eta$ denote the counit and unit respectively. Since $(-)\circ \pi$ is fully faithful and exact, $\epsilon$ is a natural isomorphism and $-\otimes_{\mathcal{P}}\overline{\mathcal{P}}$ is right exact and preserves projectives. Regard ${\rm Mod}-\overline{\mathcal{P}}$ as a subcategory of ${\rm Mod}-\mathcal{P}$, it is subobject closed and factor closed.

\begin{lem}\label{lem:restriction modP}
	${\rm Mod}-\overline{\mathcal{P}} \cap {\rm mod}_{\infty}-\mathcal{P}={\rm mod}_{\infty}-\overline{\mathcal{P}}$.
\end{lem}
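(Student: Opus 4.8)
The plan is to prove the two inclusions separately, and in each direction the whole argument reduces to one key fact: \emph{every representable $\overline{\mathcal{P}}$-module, regarded as a $\mathcal{P}$-module through the fully faithful exact embedding $(-)\circ\pi$, already lies in ${\rm mod}_{\infty}-\mathcal{P}$}. So first I would isolate and prove the sub-claim that $\overline{\mathcal{P}}(\pi(-),\pi(X))\in{\rm mod}_{\infty}-\mathcal{P}$ for every $X\in\mathcal{P}$. Since $\pi$ is full, the canonical comparison map $\mathcal{P}(-,X)\to\overline{\mathcal{P}}(\pi(-),\pi(X))$ is an epimorphism of $\mathcal{P}$-modules with kernel $\mathcal{I}(-,X)$, yielding a short exact sequence $0\to\mathcal{I}(-,X)\to\mathcal{P}(-,X)\to\overline{\mathcal{P}}(\pi(-),\pi(X))\to0$. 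As ${\rm mod}_{\infty}-\mathcal{P}$ is thick and $\mathcal{P}(-,X)$ is representable, it is enough to show $\mathcal{I}(-,X)\in{\rm mod}_{\infty}-\mathcal{P}$. This is exactly where condition (A) enters: taking an admissible left $\mathcal{T}$-approximation $u\colon X\to T$ and factoring it as $X\overset{d}{\twoheadrightarrow}L\overset{i}{\rightarrowtail}T$, I would identify $\mathcal{I}(-,X)$ with $\ker(\mathbb{P}(u))$, using that $f\colon P\to X$ satisfies $\mathcal{E}(f,\mathcal{T})=0$ iff $uf=0$ (because $u$ is a left $\mathcal{T}$-approximation). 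Since $\mathbb{P}(i)$ is monic and $\mathbb{P}$ sends the conflation $\ker d\rightarrowtail X\overset{d}{\twoheadrightarrow}L$ to a short exact sequence (projectivity of the objects of $\mathcal{P}$, cf.\ Lemma~\ref{Enomoto}), one gets $\mathcal{I}(-,X)\cong\mathbb{P}(\ker d)\in{\rm Im}\,\mathbb{P}\subseteq{\rm mod}_{\infty}-\mathcal{P}$.

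Granting the sub-claim, the inclusion ${\rm mod}_{\infty}-\overline{\mathcal{P}}\subseteq{\rm Mod}-\overline{\mathcal{P}}\cap{\rm mod}_{\infty}-\mathcal{P}$ is quick. Membership in ${\rm Mod}-\overline{\mathcal{P}}$ is automatic, and for the other factor I take a resolution of $F\in{\rm mod}_{\infty}-\overline{\mathcal{P}}$ by $\overline{\mathcal{P}}$-representables, apply the exact functor $(-)\circ\pi$ to obtain a resolution of $F\circ\pi$ all of whose terms are the modules $\overline{\mathcal{P}}(\pi(-),\pi(X_i))$, and then combine Lemma~\ref{infinite resolution} with the sub-claim to conclude $F\circ\pi\in{\rm mod}_{\infty}-\mathcal{P}$.

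For the reverse inclusion I would build a $\overline{\mathcal{P}}$-resolution by hand. Let $M\in{\rm Mod}-\overline{\mathcal{P}}\cap{\rm mod}_{\infty}-\mathcal{P}$; being finitely generated over $\mathcal{P}$, it admits an epimorphism $\mathcal{P}(-,X_0)\twoheadrightarrow M$. A Yoneda computation shows that, because $M$ kills $\mathcal{I}$, this map annihilates $\mathcal{I}(-,X_0)$ and hence factors through a $\overline{\mathcal{P}}$-epimorphism $\overline{\mathcal{P}}(-,\pi(X_0))\twoheadrightarrow M$. Its kernel $N$ again lies in ${\rm Mod}-\overline{\mathcal{P}}$, and applying the exact embedding to the defining short exact sequence, together with thickness of ${\rm mod}_{\infty}-\mathcal{P}$ and the sub-claim, shows $N\in{\rm mod}_{\infty}-\mathcal{P}$ as well. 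Thus $N$ is an object of the same type as $M$, and iterating the construction produces an exact resolution of $M$ by $\overline{\mathcal{P}}$-representables, i.e.\ $M\in{\rm mod}_{\infty}-\overline{\mathcal{P}}$.

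The main obstacle is precisely the sub-claim $\overline{\mathcal{P}}(\pi(-),\pi(X))\in{\rm mod}_{\infty}-\mathcal{P}$; everything else is formal bookkeeping with exactness of $(-)\circ\pi$ and thickness of ${\rm mod}_{\infty}-\mathcal{P}$. The sole non-formal input is the identification $\mathcal{I}(-,X)\cong\mathbb{P}(\ker d)$, and this is exactly the point where the admissibility in (A) is indispensable: without it one cannot realise $\mathcal{I}(-,X)$ as $\mathbb{P}$ of an object of $\mathcal{E}$, and it need not lie in ${\rm mod}_{\infty}-\mathcal{P}$.
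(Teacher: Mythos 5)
Your proposal is correct and follows essentially the same route as the paper: the key step in both is to use condition (A) and the admissibility of the approximation $Q\to T$ to show that each restricted representable $\overline{\mathcal{P}}(-,Q)$ lies in ${\rm mod}_{\infty}-\mathcal{P}$ (the paper identifies the image $\overline{\mathcal{P}}(-,Q)\cong\mathbb{P}({\rm Im}f)$ directly, while you identify the kernel $\mathcal{I}(-,Q)\cong\mathbb{P}(\ker d)$ and invoke thickness --- the same computation viewed from the other end), after which one inclusion follows from Lemma~\ref{infinite resolution} and the other from iterating the kernel construction exactly as in the paper.
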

\begin{proof}
	For any projective $Q\in \mathcal{P}$ 
	there exists an exact sequence 
	$$Q\stackrel{f} \rightarrow T\twoheadrightarrow C,~T\in \mathcal{T}$$ 
	with $f$ a left $\mathcal{T}$-approximation because $\mathcal{T}$ satisfies condition (A). So we obtain an exact sequence 
	$$\mathcal{P}(-,Q) \rightarrow (-,T)|_{\mathcal{P}} \rightarrow (-,C)|_{\mathcal{P}} \rightarrow 0$$ 
	in ${\rm Mod}-\mathcal{P}$. Applying $-\otimes_{\mathcal{P}}\overline{\mathcal{P}}$ and $(-)\circ \pi$ we obtain a commutative diagram with exact rows
	\[\begin{tikzcd}
		& \mathcal{P}(-,Q) \arrow[r] \arrow[d,"\eta_{Q}"] & (-,T)|_{\mathcal{P}} \arrow[r] \arrow[d,equal] & (-,C)|_{\mathcal{P}} \arrow[r] \arrow[d,equal] & 0 \\
		0 \arrow[r] & \overline{\mathcal{P}}(-,Q)\arrow[r,"i"] & (-,T)|_{\mathcal{P}} \arrow[r] & (-,C)|_{\mathcal{P}} \arrow[r] & 0 
	\end{tikzcd}\]
    where $\eta_{Q}$ is the canonical surjective map. One can check that $i$ is a monomorphism because $f$ is an approximation. 
    Denote by $K$ the image of $f$, then $\overline{\mathcal{P}}(-,Q)\cong (-,K)|_{\mathcal{P}}$ in ${\rm Mod}-\mathcal{P}$ and so $\overline{\mathcal{P}}(-,Q)\in {\rm Im}\mathbb{P}$. Hence ${\rm mod}_{\infty}-\overline{\mathcal{P}} \subseteq {\rm Mod}-\overline{\mathcal{P}} \cap {\rm mod}_{\infty}-\mathcal{P}$ by Lemma \ref{infinite resolution}.

    Assume $X\in {\rm Mod}-\overline{\mathcal{P}} \cap {\rm mod}_{\infty}-\mathcal{P}$, there exists an exact sequence
    \[\cdots \rightarrow \mathcal{P}(-,Q_{n})\rightarrow \cdots \rightarrow \mathcal{P}(-,Q_{1})\rightarrow \mathcal{P}(-,Q_{0})\rightarrow X \rightarrow 0.\]
    Applying $-\otimes_{\mathcal{P}}\overline{\mathcal{P}}$ and $(-)\circ \pi$ we obtain
    \[0 \rightarrow K_{1} \rightarrow \overline{\mathcal{P}}(-,Q_{1})\rightarrow \overline{\mathcal{P}}(-,Q_{0})\rightarrow X \rightarrow 0\]
    where $K_{1} \in {\rm Mod}-\overline{\mathcal{P}} \cap {\rm mod}_{\infty}-\mathcal{P}$ because ${\rm Mod}-\overline{\mathcal{P}}$ is subobject closed and ${\rm mod}_{\infty}-\mathcal{P}$ is closed under kernels of a deflation. Repeat the construction above, it follows that $X \in {\rm mod}_{\infty}-\overline{\mathcal{P}}$.
\end{proof}

Define 
$$\mathcal{A}_{\mathcal{T}}:=\{X\in \mathcal{E}~|~\mathbb{P}(X)\in {\rm mod}_{\infty}-\overline{\mathcal{P}}\}.$$ 
Since $\mathbb{P}$ is fully faithful and by Lemma~\ref{lem:restriction modP} we have $\mathcal{A}_{\mathcal{T}}=\{X\in \mathcal{E}~|~\mathcal{E}(\mathcal{I},X)=0\}.$
Moreover,
$\mathcal{A}_{\mathcal{T}}$ is a subobject, factor and additively closed subcategory. Set
\begin{align*}
\mathcal{S}_{\mathcal{T}}&:=\{\text{conflations in}~\mathcal{S}~\text{such that all three terms lie in}~\mathcal{A}_{\mathcal{T}}\},\\
\mathcal{E}_{\mathcal{T}}&:=(\mathcal{A}_{\mathcal{T}}, \mathcal{S}_{\mathcal{T}}).  
\end{align*}

\begin{lem}\label{find subcategory}
	Assume $\mathcal{E}$ is weakly idempotent complete. Then $\mathcal{E}_{\mathcal{T}}$ is an exact category with enough projectives given by\\
	$\mathcal{P}_{\mathcal{T}}=\mathsf{add} \{K~|~\exists~Q \in \mathcal{P}~{\rm and~an~admissible~left~\mathcal{T}-approximation}~Q\stackrel{f} \rightarrow T~{\rm s.t.}~K={\rm Im}f \}.$
\end{lem}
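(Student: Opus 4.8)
The plan is to carry the whole statement across the fully faithful exact embedding $\mathbb{P}$ of Lemma \ref{Enomoto} and to read off the exact structure of $\mathcal{E}_{\mathcal{T}}$ from the \emph{abelian} category ${\rm mod}_{\infty}-\overline{\mathcal{P}}$ rather than from $\mathcal{E}$ itself. Since $\mathbb{P}$ is fully faithful, its restriction is an equivalence $\mathbb{P}\colon \mathcal{A}_{\mathcal{T}}\xrightarrow{\ \simeq\ }{\rm Im}\mathbb{P}\cap({\rm mod}_{\infty}-\overline{\mathcal{P}})$, the intersection being formed inside ${\rm mod}_{\infty}-\mathcal{P}$ and meaningful by Lemma \ref{lem:restriction modP}. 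I would then prove the claim in two halves: (a) that this equivalence carries $\mathcal{S}_{\mathcal{T}}$ bijectively onto the class of short exact sequences of ${\rm mod}_{\infty}-\overline{\mathcal{P}}$ with terms in the target, that the target is extension closed there, and hence that $\mathcal{E}_{\mathcal{T}}$ is exact; and (b) the identification of the projectives on the functor side.

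For (a) the one delicate point is that exactness of a sequence of $\overline{\mathcal{P}}$-modules is the same whether tested in ${\rm Mod}-\overline{\mathcal{P}}$ or in ${\rm Mod}-\mathcal{P}$: the functor $(-)\circ\pi$ is exact and faithful, hence both preserves and reflects exactness. Granting this, a conflation of $\mathcal{E}$ with all terms in $\mathcal{A}_{\mathcal{T}}$ is the same datum as a short exact sequence of ${\rm mod}_{\infty}-\overline{\mathcal{P}}$ with all terms in ${\rm Im}\mathbb{P}$; indeed $\mathbb{P}$ is exact and, by Lemma \ref{Enomoto}, preserves all extension groups, so ${\rm Im}\mathbb{P}$ is extension closed in ${\rm mod}_{\infty}-\mathcal{P}$, and the same fact shows ${\rm Im}\mathbb{P}\cap({\rm mod}_{\infty}-\overline{\mathcal{P}})$ is extension closed in the abelian category ${\rm mod}_{\infty}-\overline{\mathcal{P}}$. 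As an extension closed subcategory of an abelian category it is an exact category, and transporting this structure along the equivalence makes $\mathcal{E}_{\mathcal{T}}=(\mathcal{A}_{\mathcal{T}},\mathcal{S}_{\mathcal{T}})$ exact.

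For (b), recall from the proof of Lemma \ref{lem:restriction modP} that for $Q\in\mathcal{P}$ with admissible left $\mathcal{T}$-approximation $Q\xrightarrow{f}T$, factored as $Q\xrightarrow{d}K\rightarrowtail T$ with $K={\rm Im}f$, one has $\overline{\mathcal{P}}(-,Q)\cong(-,K)|_{\mathcal{P}}=\mathbb{P}(K)$, under which the canonical quotient $\mathcal{P}(-,Q)\twoheadrightarrow\overline{\mathcal{P}}(-,Q)$ corresponds to $\mathbb{P}(d)$. Since representable functors are projective in ${\rm mod}_{\infty}-\overline{\mathcal{P}}$ and lie in ${\rm Im}\mathbb{P}$, each such $K$ is projective in $\mathcal{E}_{\mathcal{T}}$, and therefore so is every object of $\mathcal{P}_{\mathcal{T}}=\mathsf{add}\{K\}$, being a summand. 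For enough projectives, take $X\in\mathcal{A}_{\mathcal{T}}$ and a deflation $p\colon P\twoheadrightarrow X$ with $P\in\mathcal{P}$, together with an admissible approximation $P\xrightarrow{d}K_{P}\rightarrowtail T$. Because $\mathbb{P}(X)$ is a $\overline{\mathcal{P}}$-module, $\mathbb{P}(p)$ factors through $\mathcal{P}(-,P)\twoheadrightarrow\overline{\mathcal{P}}(-,P)\cong\mathbb{P}(K_{P})$, and by fullness and faithfulness of $\mathbb{P}$ this means $p=\rho d$ for some $\rho\colon K_{P}\to X$. As $p=\rho d$ is a deflation, Heller's cancellation (Lemma \ref{deflation}, which is exactly where weak idempotent completeness is used) shows $\rho$ is a deflation; its kernel lies in $\mathcal{A}_{\mathcal{T}}$ since $\mathcal{A}_{\mathcal{T}}$ is subobject closed, so $\rho$ is a deflation in $\mathcal{E}_{\mathcal{T}}$ onto $X$ from the projective $K_{P}\in\mathcal{P}_{\mathcal{T}}$. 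Finally, any projective of $\mathcal{E}_{\mathcal{T}}$ admits such a deflation, which splits, exhibiting it as a summand of an object of $\mathcal{P}_{\mathcal{T}}$; hence the projectives of $\mathcal{E}_{\mathcal{T}}$ are precisely $\mathcal{P}_{\mathcal{T}}$.

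The main obstacle is conceptual rather than computational: $\mathcal{A}_{\mathcal{T}}$ is in general \emph{not} extension closed in $\mathcal{E}$ (equivalently, the ideal $\mathcal{I}$ need not be idempotent), so $\mathcal{E}_{\mathcal{T}}$ is not a full exact subcategory of $\mathcal{E}$ in the naive sense and its conflation structure cannot simply be inherited from $\mathcal{S}$. The resolution is to notice, via the objectwise coincidence of exactness over $\mathcal{P}$ and over $\overline{\mathcal{P}}$, that $\mathcal{S}_{\mathcal{T}}$ is exactly the restricted exact structure imported from the abelian category ${\rm mod}_{\infty}-\overline{\mathcal{P}}$. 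Once this identification is secured the remaining steps are routine, the only further nontrivial input being Heller's cancellation, which promotes the functor-categorical epimorphism onto $X$ to an honest deflation in $\mathcal{E}$.
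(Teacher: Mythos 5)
Your proof is correct, and the projectives half (both that each $K$ is projective via $\overline{\mathcal{P}}(-,Q)\cong(-,K)|_{\mathcal{P}}$, and the factorization $p=\rho d$ followed by Heller cancellation) is essentially the paper's argument; you even supply two small points the paper leaves implicit, namely that $\ker\rho\in\mathcal{A}_{\mathcal{T}}$ by subobject closure and that every projective of $\mathcal{E}_{\mathcal{T}}$ is a summand of some $K$. Where you genuinely diverge is in proving that $\mathcal{E}_{\mathcal{T}}$ is exact: the paper verifies the pushout and pullback axioms directly inside $\mathcal{E}$, using only that $\mathcal{A}_{\mathcal{T}}$ is factor and subobject closed together with the conflation $X\rightarrowtail M\oplus Y\twoheadrightarrow N$ extracted from a pushout square (\cite[Proposition 2.12]{T.Buhler}); this is more elementary and needs nothing beyond the ambient exact structure. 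You instead transport the problem along $\mathbb{P}$ and observe that ${\rm Im}\mathbb{P}\cap{\rm Mod}-\overline{\mathcal{P}}$ \emph{is} extension closed in ${\rm Mod}-\overline{\mathcal{P}}$ even though $\mathcal{A}_{\mathcal{T}}$ is not extension closed in $\mathcal{E}$ --- a genuinely different and conceptually illuminating route, which however leans on the full strength of Lemma \ref{Enomoto} (preservation of all ${\rm Ext}$ groups, to get that ${\rm Im}\mathbb{P}$ is extension closed in ${\rm Mod}-\mathcal{P}$ and that every short exact sequence with terms in the image descends to a conflation of $\mathcal{E}$). Two cosmetic cautions: ${\rm mod}_{\infty}-\overline{\mathcal{P}}$ need not be abelian, only an extension closed (hence exact) subcategory of the abelian category ${\rm Mod}-\overline{\mathcal{P}}$, which is all your argument actually uses; and you should note explicitly that the exact structure you import coincides with $\mathcal{S}_{\mathcal{T}}$ as defined, which follows from $(-)\circ\pi$ being exact and reflecting exactness, exactly as you indicate.
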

\begin{proof}
	Firstly we prove $\mathcal{E}_{\mathcal{T}}$ is an exact category. Let $X\rightarrowtail Y\twoheadrightarrow Z$ be a conflation in $\mathcal{S}_{\mathcal{T}}$ and $g:X\rightarrow M$ be a morphism in $\mathcal{E}_{\mathcal{T}}$. Consider the pushout diagram in $\mathcal{E}$
	\[\begin{tikzcd}
		X \arrow[r,tail] \arrow[d,"g"] \arrow[dr, phantom, "{\rm PO}"] & Y \arrow[r,two heads] \arrow[d] & Z \arrow[d,equal]\\
		M \arrow[r,tail] & N \arrow[r,two heads] & Z,
	\end{tikzcd}\]
    it induces a conflation $X \rightarrowtail M\oplus Y \twoheadrightarrow N$ in $\mathcal{E}$ (see \cite[Proposition 2.12]{T.Buhler}). Since  $\mathcal{E}_{\mathcal{T}}$ is factor closed and $M\oplus N\in  \mathcal{E}_{\mathcal{T}}$, then $N\in \mathcal{E}_{\mathcal{T}}$. Thus the pushout axiom holds for $\mathcal{E}_{\mathcal{T}}$. Similarly the pullback axiom also holds. Therefore $\mathcal{E}_{\mathcal{T}}$ is an exact category.

    Let $Q \in \mathcal{P}$, $Q\stackrel{f} \rightarrow T\twoheadrightarrow C$ be an exact sequence with $f$ is a left $\mathcal{T}$-approximation and $K={\rm Im}f$. Then by the proof of Lemma \ref{lem:restriction modP}, we have $\overline{\mathcal{P}}(-,Q)\cong (-,K)|_{\mathcal{P}}$. For any conflation $M \rightarrowtail N \twoheadrightarrow K$ in $\mathcal{S}_{\mathcal{T}}$, its image under $\mathbb{P}$ is an exact sequence 
    $$0 \rightarrow (-,M)|_{\mathcal{P}} \rightarrow (-,N)|_{\mathcal{P}} \rightarrow (-,K)|_{\mathcal{P}}\rightarrow 0$$ 
    in ${\rm Mod}-\overline{\mathcal{P}}$ (by the definition of $\mathcal{A}_{\mathcal{T}}$) which splits since $(-,K)|_{\mathcal{P}}$ is projective. Because $\mathbb{P}$ is fully faithful, the conflation $M \rightarrowtail N \twoheadrightarrow K$ also splits. This shows $K\in \mathcal{E}_{\mathcal{T}}$ is projective.

    For any $X\in \mathcal{E}_{\mathcal{T}}$ we have a deflation $Q\stackrel{a}\twoheadrightarrow X$ for some $Q\in \mathcal{P}$ and then an exact sequence $Q\stackrel{f} \rightarrow T\twoheadrightarrow C$ as above. The commutative diagram
    \[\begin{tikzcd}[column sep=small,row sep=tiny]
    	Q \arrow[rr,"f"] \arrow[dr,two heads,"j" swap] \arrow[dd,two heads,"a" swap] & & T \arrow[r,two heads] & C \\
    	& K \arrow[ur,tail,"i" swap] & & \\
    	X&&&
    \end{tikzcd}\] induces the following commutative diagram by applying $\mathbb{P}$
    \[\begin{tikzcd}[column sep=small,row sep=tiny]
    	\mathcal{P}(-,Q) \arrow[rr] \arrow[dr] \arrow[dd] & & (-,T)|_{\mathcal{P}} \arrow[r] & (-,C)|_{\mathcal{P}} \arrow[r] & 0 \\
    	& (-,K)|_{\mathcal{P}} \arrow[ur] \arrow[dl,dashed] & & & \\
    	(-,X)|_{\mathcal{P}} & & & & 
    \end{tikzcd}.\]
    Then apply $-\otimes_{\mathcal{P}}\overline{\mathcal{P}}$ and $(-)\circ \pi$, we obtain a morphism $(-,K)|_{\mathcal{P}} \rightarrow (-,X)|_{\mathcal{P}}$ making the above diagram commute.
    By fully faithfulness of $\mathbb{P}$, there is a morphism $K\stackrel{d}\rightarrow X$ such that $dj=a$, hence it is a deflation by Lemma \ref{deflation}. Thus the assertion follows.
\end{proof}

\begin{cor}
	If $\mathcal{E}$ is abelian, so is $\mathcal{E}_{\mathcal{T}}$.
\end{cor}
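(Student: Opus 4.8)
The plan is to exploit the fact that when $\mathcal{E}$ is abelian the class $\mathcal{S}$ is precisely the class of all short exact sequences, so that $\mathcal{S}_{\mathcal{T}}$ becomes the class of all short exact sequences of $\mathcal{E}$ whose three terms lie in $\mathcal{A}_{\mathcal{T}}$. Since we have already observed that $\mathcal{A}_{\mathcal{T}}$ is closed under subobjects, quotients and finite direct sums in $\mathcal{E}$, the strategy is to show that the underlying additive category $\mathcal{A}_{\mathcal{T}}$ is abelian in its own right and that $\mathcal{S}_{\mathcal{T}}$ coincides with its canonical abelian exact structure.

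First I would verify that $\mathcal{A}_{\mathcal{T}}$ admits all kernels and cokernels. Given a morphism $f\colon X\rightarrow Y$ in $\mathcal{A}_{\mathcal{T}}$, form $\ker f$ and ${\rm coker}\,f$ in $\mathcal{E}$. As $\ker f$ is a subobject of $X$ and ${\rm coker}\,f$ is a quotient of $Y$, both lie in $\mathcal{A}_{\mathcal{T}}$ by closure under subobjects and quotients; since $\mathcal{A}_{\mathcal{T}}$ is a full subcategory, they serve as a kernel and a cokernel of $f$ inside $\mathcal{A}_{\mathcal{T}}$ as well. Together with the existence of finite biproducts this makes $\mathcal{A}_{\mathcal{T}}$ additive with kernels and cokernels.

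Next I would check normality and conormality. If $m\colon X\rightarrow Y$ is monic in $\mathcal{A}_{\mathcal{T}}$, then its kernel computed in $\mathcal{E}$ lies in $\mathcal{A}_{\mathcal{T}}$ and is also its kernel there, hence is $0$; so $m$ is monic in $\mathcal{E}$. Because $\mathcal{E}$ is abelian we have $m=\ker({\rm coker}\,m)$ in $\mathcal{E}$, and since ${\rm coker}\,m\in\mathcal{A}_{\mathcal{T}}$ this universal property persists in $\mathcal{A}_{\mathcal{T}}$, so $m$ is a kernel. The dual argument shows every epimorphism of $\mathcal{A}_{\mathcal{T}}$ is a cokernel. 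Consequently $\mathcal{A}_{\mathcal{T}}$ is abelian, and moreover monomorphisms, epimorphisms, kernels and cokernels in $\mathcal{A}_{\mathcal{T}}$ are exactly those of $\mathcal{E}$ taken between objects of $\mathcal{A}_{\mathcal{T}}$.

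Finally, the canonical abelian exact structure on $\mathcal{A}_{\mathcal{T}}$ consists of the sequences $X\rightarrowtail Y\twoheadrightarrow Z$ with $X=\ker(Y\rightarrow Z)$ and $Z={\rm coker}(X\rightarrow Y)$ in $\mathcal{A}_{\mathcal{T}}$; by the previous step these are precisely the short exact sequences of $\mathcal{E}$ all of whose terms lie in $\mathcal{A}_{\mathcal{T}}$, which is exactly $\mathcal{S}_{\mathcal{T}}$. Hence $\mathcal{E}_{\mathcal{T}}=(\mathcal{A}_{\mathcal{T}},\mathcal{S}_{\mathcal{T}})$ is abelian. I expect no serious obstacle: the whole argument rests on the closure properties of $\mathcal{A}_{\mathcal{T}}$ already recorded, and the only point demanding care is the last identification of $\mathcal{S}_{\mathcal{T}}$ with the intrinsic exact structure, which is immediate once kernels and cokernels are seen to be computed identically in $\mathcal{A}_{\mathcal{T}}$ and in $\mathcal{E}$.
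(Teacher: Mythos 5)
Your proposal is correct and follows essentially the same route as the paper, which disposes of the corollary in one line by invoking exactly the facts you use: that $\mathcal{A}_{\mathcal{T}}$ is subobject and factor closed (so kernels, cokernels and images computed in the abelian category $\mathcal{E}$ stay in $\mathcal{A}_{\mathcal{T}}$) and that every morphism of $\mathcal{E}$ is admissible. You have simply written out in full the verification of the abelian axioms and the identification of $\mathcal{S}_{\mathcal{T}}$ with the canonical exact structure that the paper leaves implicit.
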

\begin{proof}
	It follows by the fact that $\mathcal{E}_{\mathcal{T}}$ is factor and subobject closed and every morphism in $\mathcal{E}$ is admissible. 
\end{proof}

In general, $\mathcal{E}_{\mathcal{T}}$ is not a full exact subcategory of $\mathcal{E}$ (see Remark \ref{rem:rest subcat}(3)). Now we state our first main result of this section. 

\begin{thm}\label{thm2}
	Assume $\mathcal{E}$ is weakly idempotent complete. If $\mathcal{T}$ is a $\tau$-rigid subcategory satisfying (A). Then $\mathcal{T}$ is a partial tilting subcategory of $\mathcal{E}_{\mathcal{T}}$. In particular, if $\mathcal{T}$ is a support $\tau$-tilting subcategory, then $\mathcal{T}$ is a tilting subcategory of $\mathcal{E}_{\mathcal{T}}$.
\end{thm}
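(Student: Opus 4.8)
The plan is to verify, for the pair $(\mathcal{T},\mathcal{E}_{\mathcal{T}})$, the characterization of $1$-tilting subcategories in \cite[Theorem 5.3]{Sauter}, using that $\mathcal{E}_{\mathcal{T}}$ has enough projectives $\mathcal{P}_{\mathcal{T}}$ by Lemma \ref{find subcategory}. Three structural facts will carry the argument: every conflation of $\mathcal{E}_{\mathcal{T}}$ is a conflation of $\mathcal{E}$ (by definition of $\mathcal{S}_{\mathcal{T}}$) and splitting is intrinsic, so vanishing of self-extensions can be read off in $\mathcal{E}$; $\mathcal{A}_{\mathcal{T}}$ is subobject- and factor-closed in $\mathcal{E}$, which is exactly what forces the pushouts I build to remain inside $\mathcal{E}_{\mathcal{T}}$; and $\mathcal{T}\subseteq\mathcal{A}_{\mathcal{T}}$, since $\mathcal{E}(\mathcal{I},\mathcal{T})=0$ by the definition of $\mathcal{I}$. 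For the partial-tilting statement it then suffices to check that $\mathcal{T}$ is closed under summands and self-orthogonal in $\mathcal{E}_{\mathcal{T}}$ and that ${\rm pd}_{\mathcal{E}_{\mathcal{T}}}\mathcal{T}\le 1$; for the ``in particular'' clause I must add $\mathcal{P}_{\mathcal{T}}\subseteq{\rm Cores}_1(\mathcal{T})$.

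Self-orthogonality is the easy half. A class in ${\rm Ext}^1_{\mathcal{E}_{\mathcal{T}}}(T,Y)$ with $T\in\mathcal{T}$ and $Y\in\mathsf{Fac}_{\mathcal{E}_{\mathcal{T}}}\mathcal{T}$ is a conflation of $\mathcal{E}$ whose end terms lie in $\mathcal{T}$ and in $\mathsf{Fac}_{\mathcal{E}}\mathcal{T}$; $\tau$-rigidity of $\mathcal{T}$ in $\mathcal{E}$ makes it split in $\mathcal{E}$, hence in $\mathcal{E}_{\mathcal{T}}$, so ${\rm Ext}^1_{\mathcal{E}_{\mathcal{T}}}(\mathcal{T},\mathsf{Fac}_{\mathcal{E}_{\mathcal{T}}}\mathcal{T})=0$. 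For ${\rm pd}_{\mathcal{E}_{\mathcal{T}}}\mathcal{T}\le1$ I would transport the argument of Lemma \ref{criterion} into $\mathcal{E}_{\mathcal{T}}$. The key input is that each generator $K={\rm Im}f$ of $\mathcal{P}_{\mathcal{T}}$ sits in a conflation $K\rightarrowtail T^0\twoheadrightarrow C$ of $\mathcal{E}_{\mathcal{T}}$ (with $C\in\mathsf{Fac}_{\mathcal{E}_{\mathcal{T}}}\mathcal{T}$ by factor-closedness), gotten by splitting the admissible approximation $Q\xrightarrow{f}T^0$ as $Q\twoheadrightarrow K\rightarrowtail T^0$; this is the exact analogue of the hypothesis ``$f$ is an inflation'' in Lemma \ref{criterion}. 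Given $T\in\mathcal{T}$ and $X\in\mathcal{A}_{\mathcal{T}}$, I represent a class of ${\rm Ext}^2_{\mathcal{E}_{\mathcal{T}}}(T,X)$ by a $2$-extension, take a projective deflation $K\twoheadrightarrow Z_2$ from such a generator (Lemma \ref{find subcategory} produces deflations from single generators), and run the two pushouts of Lemma \ref{criterion}; factor-closedness of $\mathcal{A}_{\mathcal{T}}$ keeps each new object in $\mathcal{E}_{\mathcal{T}}$ and places the middle-left term $M$ in $\mathsf{Fac}_{\mathcal{E}_{\mathcal{T}}}\mathcal{T}$. Splicing then exhibits the class as a Yoneda product with a factor in ${\rm Ext}^1_{\mathcal{E}_{\mathcal{T}}}(T,W')$, $W'\in\mathsf{Fac}_{\mathcal{E}_{\mathcal{T}}}\mathcal{T}$, which vanishes by the previous step. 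Since these coresolutions do not require $C\in\mathcal{T}$, this already proves partial tilting for any $\tau$-rigid $\mathcal{T}$ satisfying (A).

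For the tilting conclusion when $\mathcal{T}$ is support $\tau$-tilting, condition (2) of Definition \ref{def} upgrades $C$ to $T^1\in\mathcal{T}$, so each generator $K$ lies in ${\rm Cores}_1(\mathcal{T})$. The remaining task is to pass to all of $\mathcal{P}_{\mathcal{T}}=\mathsf{add}\{\,K\,\}$, i.e.\ to show ${\rm Cores}_1(\mathcal{T})$ is closed under summands. I would first note $\mathsf{Fac}_{\mathcal{E}_{\mathcal{T}}}\mathcal{T}=\mathsf{Fac}_{\mathcal{E}}\mathcal{T}$: subobject-closedness of $\mathcal{A}_{\mathcal{T}}$ shows a deflation $T\twoheadrightarrow X$ in $\mathcal{E}$ with $T\in\mathcal{T}$ already lies in $\mathcal{E}_{\mathcal{T}}$, so on this common subcategory the two exact structures, and hence ${\rm Ext}^1_{\mathcal{E}}$ and ${\rm Ext}^1_{\mathcal{E}_{\mathcal{T}}}$, coincide; by Corollary \ref{T=Ext-proj} (via Lemma \ref{enough proj T}) one then has ${\rm P}(\mathsf{Fac}_{\mathcal{E}_{\mathcal{T}}}\mathcal{T})=\mathcal{T}$. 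Now if $K\oplus K'\rightarrowtail T^0\twoheadrightarrow T^1$ with $T^0,T^1\in\mathcal{T}$, composing with the summand inclusion yields $K\rightarrowtail T^0\twoheadrightarrow C$ with $C$ fitting into $K'\rightarrowtail C\twoheadrightarrow T^1$; as $K'$ is projective and $T^1$ is $\tau$-rigid, ${\rm Ext}^1_{\mathcal{E}_{\mathcal{T}}}(C,\mathsf{Fac}_{\mathcal{E}_{\mathcal{T}}}\mathcal{T})=0$, whence $C\in{\rm P}(\mathsf{Fac}_{\mathcal{E}_{\mathcal{T}}}\mathcal{T})=\mathcal{T}$ and $K\in{\rm Cores}_1(\mathcal{T})$. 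Since every projective is a summand of a single generator (Lemma \ref{find subcategory}), this settles condition (3) for all of $\mathcal{P}_{\mathcal{T}}$, and Sauter's theorem gives that $\mathcal{T}$ is tilting.

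The main obstacle I anticipate is precisely the bookkeeping forced by $\mathcal{E}_{\mathcal{T}}$ \emph{not} being a full exact subcategory of $\mathcal{E}$: at each pushout one must certify that the new middle term again belongs to $\mathcal{A}_{\mathcal{T}}$, and here factor- and subobject-closedness of $\mathcal{A}_{\mathcal{T}}$ (rather than extension-closedness, which fails) do the work; moreover the summand-closure of ${\rm Cores}_1(\mathcal{T})$ cannot be obtained by naively restricting a coresolution and must instead be extracted from $\mathcal{T}={\rm P}(\mathsf{Fac}\mathcal{T})$. A related subtlety to flag is that extension groups computed in $\mathcal{E}_{\mathcal{T}}$ are a priori smaller than those in $\mathcal{E}$, so every vanishing must be argued intrinsically by splitting; the identity $\mathsf{Fac}_{\mathcal{E}_{\mathcal{T}}}\mathcal{T}=\mathsf{Fac}_{\mathcal{E}}\mathcal{T}$ is what renders this harmless on the subcategories that actually enter the proof.
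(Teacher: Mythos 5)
Your proposal is correct and follows essentially the same route as the paper: transfer of ${\rm Ext}^1$-vanishing to $\mathcal{E}_{\mathcal{T}}$, the Lemma~\ref{criterion} pushout argument run inside $\mathcal{E}_{\mathcal{T}}$ using the generator conflations $K\rightarrowtail T^{0}\twoheadrightarrow C$ from Lemma~\ref{find subcategory}, and the identification $\mathcal{T}={\rm P}(\mathsf{Fac}\mathcal{T})$ (Corollary~\ref{T=Ext-proj}) to finish the tilting case. The only cosmetic difference is in the summand step, where you restrict along the split inflation $K\rightarrowtail K\oplus K'$ and show the resulting cokernel lies in $\mathcal{T}$, whereas the paper pushes out along the split deflation $K\twoheadrightarrow Q'$ and shows the new middle term lies in $\mathcal{T}$; both reduce to the same Ext-projectivity computation.
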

\begin{proof}
	Clearly $\mathcal{T}\subseteq \mathcal{E}_{\mathcal{T}}$ and ${\mathsf{Fac}}_{\mathcal{E}}\mathcal{T}={\mathsf{Fac}}_{\mathcal{E}_{\mathcal{T}}}\mathcal{T}$. Because ${\rm Ext}_{\mathcal{E}}^{1}(\mathcal{T},\mathsf{Fac}\mathcal{T})=0$, we have ${\rm Ext}_{\mathcal{E}_{\mathcal{T}}}^{1}(\mathcal{T},\mathsf{Fac}\mathcal{T})=0$. By the same proof as Lemma \ref{criterion}, we can show that ${\rm pd}_{\mathcal{E}_{\mathcal{T}}}\mathcal{T}\leq 1$. Indeed, it suffices to show that for every $X\in \mathcal{E}_{\mathcal{T}}$ there is a deflation $K\twoheadrightarrow X$ for some $K\in \mathcal{P}_{\mathcal{T}}$ and a conflation 
	\[K\rightarrowtail T\twoheadrightarrow C\]
	with $T\in \mathcal{T}$. By the last paragraph of the proof of Lemma \ref{find subcategory}, it is obvious. 
	
	Assume $\mathcal{T}$ is a support $\tau$-tilting subcategory. If $K\in \mathcal{P}_{\mathcal{T}}$ such that $K={\rm Im}f$ as in Lemma \ref{find subcategory}, by the proof of Proposition \ref{inclusion}, there is a conflation $K \rightarrowtail T^{0} \twoheadrightarrow T^{1}$ such that $T^{0},T^{1}\in \mathcal{T}$. Let $Q'\in \mathcal{P}_{\mathcal{T}}$ be any projective object, then it is a summand of some $K={\rm Im}f$. Consider the pushout diagram
	\[\begin{tikzcd}
		K \arrow[r,tail] \arrow[d,two heads] \arrow[dr, phantom, "{\rm PO}"] & T^{0} \arrow[r,two heads] \arrow[d,two heads] &T^{1} \arrow[d,equal]\\
		Q' \arrow[r,tail] & T'^{0} \arrow[r,two heads] & T^{1}.
	\end{tikzcd}\]
    Apply ${\rm Hom}_{\mathcal{E}_{\mathcal{T}}}(-,\mathsf{Fac}\mathcal{T})$ to the second row and we have ${\rm Ext}_{\mathcal{E}_{\mathcal{T}}}^{1}(T'^{0},\mathsf{Fac}\mathcal{T})=0$. Because $T'^{0}\in \mathsf{Fac}\mathcal{T}$ and the exact structures of $\mathsf{Fac}\mathcal{T}$ as full exact subcategories of $\mathcal{E}_{\mathcal{T}}$ and $\mathcal{E}$ coincides, we have $T'^{0}\in \mathcal{T}$. Thus $\mathcal{T}$ is a tilting subcategory of $\mathcal{E}$.
\end{proof}

\begin{rem}\label{rem:rest subcat}
	(1) We do not need the assumption that $\mathcal{T}$ is $\tau$-rigid until Theorem \ref{thm2}, i.e. the previous lemmas in the section hold for a subcategory $\mathcal{T}$ satisfying (A).
	
	(2) If $\mathcal{T}$ is a tilting subcategory, then $\mathcal{I}=0$ and $\mathcal{E}_{\mathcal{T}}=\mathcal{E}$.
	
	(3) If $\mathcal{E}={\rm mod}-\Lambda$ ($\Lambda$ is an artin algebra), $\mathcal{T}=\mathsf{add}T$ for a $\tau$-rigid (resp. support $\tau$-tilting) module $T$. Then $\mathcal{E}_{\mathcal{T}}= {\rm mod}-\Lambda/{\rm ann}T$ and $T\in \mathcal{E}_{\mathcal{T}}$ is a partial tilting (resp. tilting) module. Indeed, by Lemma \ref{find subcategory} we have $\mathcal{E}_{\mathcal{T}}= \mathsf{Fac}\mathcal{P}_{\mathcal{T}}$. Here $\mathcal{P}_{\mathcal{T}}=\mathsf{add}(\Lambda/{\rm ann}T)$ by Lemma \ref{lem:KS-proj} and thus $\mathcal{E}_{\mathcal{T}}=\mathsf{Fac}(\mathsf{add}(\Lambda/{\rm ann}T))={\rm mod}-\Lambda/{\rm ann}T$.
\end{rem}

\subsection{Number of indecomposables}

If $\mathcal{E}$ is Krull-Schmidt, for an additively closed subcategory $\mathcal{T}$, denote by $|\mathcal{T}|$ the cardinal of the set of isomorphism classes of indecomposable objects in $\mathcal{T}$. For an object $T$, $|T|=|\mathsf{add}T|$. The following result generalizes \cite[Theorem 5.11]{Sauter} and coincides with the well-known fact in classical tilting theory.

\begin{prop}\label{lem:number components}
	Let $\mathcal{E}=(\mathcal{A},\mathcal{S})$ be a Krull-Schmidt exact category. Then for any $n$-tilting subcategory $\mathcal{T}$, $|\mathcal{T}|$ is a definite number.
\end{prop}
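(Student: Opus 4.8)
Recall that the content of the statement is the equality $|\mathcal{T}|=|\mathcal{P}|$, the exact-category analogue of the classical fact that a tilting module has exactly as many non-isomorphic indecomposable summands as the algebra has simple (equivalently, indecomposable projective) modules. The plan is to reduce this to an isomorphism of split Grothendieck groups and to produce that isomorphism through the bounded derived category $D^{b}(\mathcal{E})$. Since $\mathcal{E}$ is Krull--Schmidt, both $\mathcal{P}$ and $\mathcal{T}$ are Krull--Schmidt, so the split Grothendieck group $K_{0}^{\mathrm{sp}}(\mathcal{P})$ (respectively $K_{0}^{\mathrm{sp}}(\mathcal{T})$), i.e. the free abelian group on isomorphism classes of objects modulo $[A\oplus B]=[A]+[B]$, is free with basis the set of indecomposables, of rank $|\mathcal{P}|$ (respectively $|\mathcal{T}|$). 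An isomorphism $K_{0}^{\mathrm{sp}}(\mathcal{P})\cong K_{0}^{\mathrm{sp}}(\mathcal{T})$ then forces $|\mathcal{P}|=|\mathcal{T}|$, since tensoring with $\mathbb{Q}$ turns it into an isomorphism of $\mathbb{Q}$-vector spaces, whose dimension is a well-defined cardinal even in the infinite case.

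First I would use the elementary identification $K_{0}(K^{b}(\mathcal{A}))\cong K_{0}^{\mathrm{sp}}(\mathcal{A})$, valid for any additive $\mathcal{A}$: in the homotopy category every bounded complex satisfies $[X^{\bullet}]=\sum_{i}(-1)^{i}[X^{i}]$ and every triangle of stalk complexes splits because $\mathrm{Hom}_{K^{b}(\mathcal{A})}(A,A'[1])=0$, so the only relations are the additive ones. Next I would invoke the standard fact that a self-orthogonal subcategory embeds into the derived category: if $\mathcal{A}\subseteq\mathcal{E}$ satisfies $\mathrm{Ext}^{i}_{\mathcal{E}}(\mathcal{A},\mathcal{A})=0$ for all $i>0$, then $K^{b}(\mathcal{A})\to D^{b}(\mathcal{E})$ is fully faithful, with essential image the triangulated subcategory $\langle\mathcal{A}\rangle$ generated by $\mathcal{A}$ (every bounded complex being a finite iterated cone of stalks $A[i]$, and the functor being fully faithful). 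This applies to $\mathcal{A}=\mathcal{P}$, since projectives have no higher self-extensions, and to $\mathcal{A}=\mathcal{T}$, where the hypothesis is precisely the self-orthogonality $\mathcal{T}\subseteq\mathcal{T}^{\bot}$ from the characterization of $n$-tilting subcategories.

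The crux is to show that these two essential images coincide. Because $\mathrm{pd}_{\mathcal{E}}\mathcal{T}\leq n$, each $T\in\mathcal{T}$ has a finite projective resolution, so $\mathcal{T}\subseteq\langle\mathcal{P}\rangle$ and hence $\langle\mathcal{T}\rangle\subseteq\langle\mathcal{P}\rangle$; conversely the tilting axiom $\mathcal{P}\subseteq\mathrm{Cores}_{n}(\mathcal{T})$ gives each $P\in\mathcal{P}$ a finite coresolution $P\rightarrowtail T^{0}\to\cdots\twoheadrightarrow T^{n}$ with $T^{j}\in\mathcal{T}$, so $\mathcal{P}\subseteq\langle\mathcal{T}\rangle$ and $\langle\mathcal{P}\rangle\subseteq\langle\mathcal{T}\rangle$. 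Therefore $\langle\mathcal{P}\rangle=\langle\mathcal{T}\rangle$, both functors are equivalences onto this common subcategory, and composing one with a quasi-inverse of the other gives a triangle equivalence $K^{b}(\mathcal{P})\simeq K^{b}(\mathcal{T})$. Passing to Grothendieck groups and using the identification above yields $K_{0}^{\mathrm{sp}}(\mathcal{P})\cong K_{0}^{\mathrm{sp}}(\mathcal{T})$, which is exactly $|\mathcal{P}|=|\mathcal{T}|$.

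The step I expect to be the main obstacle is the full faithfulness of $K^{b}(\mathcal{T})\to D^{b}(\mathcal{E})$: this is the single place where self-orthogonality in all positive degrees, rather than mere rigidity, is genuinely needed, and it must be proved by reducing $\mathrm{Hom}_{D^{b}(\mathcal{E})}(T,T'[m])$ to $\mathrm{Ext}^{m}_{\mathcal{E}}(T,T')$ and then inducting on the length of complexes along the triangulated structure. If one wishes to avoid derived categories entirely, the same result can be obtained more concretely by defining $\alpha\colon K_{0}^{\mathrm{sp}}(\mathcal{T})\to K_{0}^{\mathrm{sp}}(\mathcal{P})$, $[T]\mapsto\sum_{i}(-1)^{i}[P_{i}]$ via projective resolutions (well defined by the generalized Schanuel lemma) and $\beta\colon K_{0}^{\mathrm{sp}}(\mathcal{P})\to K_{0}^{\mathrm{sp}}(\mathcal{T})$, $[P]\mapsto\sum_{j}(-1)^{j}[T^{j}]$ via chosen $\mathcal{T}$-coresolutions, and then checking $\alpha\beta=\mathrm{id}$ and $\beta\alpha=\mathrm{id}$ by resolving one family through the other and reading off the Euler characteristic of the resulting bounded total complex; here the bookkeeping of the double complex, together with the verification that the total complex is a genuine resolution of a projective (respectively of an object of $\mathcal{T}$), is where the work concentrates.
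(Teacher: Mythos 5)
Your argument is correct and rests on the same mechanism as the paper's proof: identify $K^{b}(\mathcal{T})$ up to triangle equivalence with a category that does not depend on $\mathcal{T}$, then pass to Grothendieck groups, use $K_{0}(K^{b}(\mathcal{A}))\cong K_{0}^{\mathrm{sp}}(\mathcal{A})$, and read off $|\mathcal{T}|$ as the rank of a fixed free abelian group via Krull--Schmidt. The differences lie in how the equivalence is obtained and in generality. You work inside $D^{b}(\mathcal{E})$ and prove $\langle\mathcal{P}\rangle=\langle\mathcal{T}\rangle$ directly from ${\rm pd}_{\mathcal{E}}\mathcal{T}\leq n$ and $\mathcal{P}\subseteq{\rm Cores}_{n}(\mathcal{T})$; this is self-contained and yields the sharper conclusion $|\mathcal{T}|=|\mathcal{P}|$, but it presupposes that $\mathcal{E}$ has enough projectives, both to turn ${\rm pd}_{\mathcal{E}}T\leq n$ into an actual finite projective resolution and to invoke the coresolution characterization of $n$-tilting. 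The proposition as stated makes no such assumption: the paper instead passes to the thick subcategory $\mathcal{P}^{<\infty}$ of objects of finite projective dimension and cites Sauter's Lemmas 4.7--4.8 for the equivalence $K^{b}(\mathcal{T})\simeq D^{b}(\mathcal{P}^{<\infty})$, so that the ``definite number'' is the rank of $K_{0}(D^{b}(\mathcal{P}^{<\infty}))$ even when $\mathcal{P}$ is unavailable. In every place the proposition is actually applied (namely $\mathcal{E}_{\mathcal{T}}$ with enough projectives $\mathcal{P}_{\mathcal{T}}$ in Theorem \ref{thm:number component}), your version suffices and recovers exactly the equality $|\mathcal{P}_{\mathcal{T}}|=|\mathcal{T}|$ that is needed. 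One small caveat: your justification of $K_{0}(K^{b}(\mathcal{A}))\cong K_{0}^{\mathrm{sp}}(\mathcal{A})$ is only a sketch (the defining relations of $K_{0}$ of a triangulated category come from all triangles, not only triangles of stalk complexes), but this is the same standard fact the paper cites from Krause, and idempotent completeness --- automatic here from the Krull--Schmidt hypothesis --- removes any difficulty.
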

\begin{proof}
	Define $\mathcal{P}^{<\infty}:=\{X\in \mathcal{E}~|~\exists~n\geq 0~\text{s.t.}~{\rm Ext}_{\mathcal{E}}^{n+1}(X,-)=0\}$. It is a thick subcategory of $\mathcal{E}$. It follows \cite[Lemma 4.8]{Sauter} that $\mathcal{T}$ is an $n$-tilting subcategory of $\mathcal{P}^{<\infty}$ and ${\rm Thick}_{\mathcal{P}^{<\infty}}(\mathcal{T})=\mathcal{P}^{<\infty}$. Thus by \cite[Lemma 4.7]{Sauter}, we have a triangle equivalence 
	$$K^{b}(\mathcal{T}) \stackrel{\simeq}\longrightarrow D^{b}(\mathcal{P}^{<\infty}).$$ 
	Then by \cite[Lemma 4.1.17]{H.Krause} we have $$K_{0}(\mathcal{P}^{<\infty})\stackrel{\simeq}\longrightarrow K_{0}(D^{b}(\mathcal{P}^{<\infty}))\stackrel{\simeq}\longrightarrow K_{0}(K^{b}(\mathcal{T}))\stackrel{\simeq}\longrightarrow K_{0}(\mathcal{T})$$ 
	where the split Grothendieck group $K_{0}(\mathcal{T})$ is a free abelian group with a basis consisting of isomorphism classes of indecomposable objects in $\mathcal{T}$. Thus the assertion follows.
\end{proof}

Assume for the rest of the subsection that $\mathcal{E}=(\mathcal{A},\mathcal{S})$ is a Krull-Schmidt exact category with enough projectives $\mathcal{P}$ and $\mathcal{T}$ is a $\tau$-rigid subcategory satisfying (A). 

Now we can prove another main result concerning the number of indecomposable objects in a support $\tau$-tilting subcategory. Recall that if an additive category is Krull-Schmidt, for a morphism $g:X\rightarrow Y$, by \cite{Bian Ning} we can always decompose $X$ (resp. $Y$) to obtain a right (resp. left) minimal morphism $g'$.

For each $Q\in \mathcal{P}$, by condition (A), there is an exact sequence 
$$Q\stackrel{f} \rightarrow T\twoheadrightarrow C$$ 
with $T\in \mathcal{T}$ and $f$ a left $\mathcal{T}$-approximation. We can decompose $T$ to obtain an admissible minimal left approximation $f':Q\rightarrow T'$ by Lemma \ref{admissible}. Thus for every $Q$, $K={\rm Im}f$ does not depend on the choice of $f$. We fix such an exact sequence with left minimal $f$ for each $Q\in \mathcal{P}$ and define a functor $\rho:\mathcal{P}\rightarrow \mathcal{P}_{\mathcal{T}}$ as follows:

(1) For an object $Q\in \mathcal{P}$, $\rho(Q)=K={\rm Im}f$.

(2) For a morphism $a:Q\rightarrow Q'$, consider the commutative diagram
\[\begin{tikzcd}[column sep=small,row sep=tiny]
	Q \arrow[rr,"f"] \arrow[dr,two heads] \arrow[dd,"a"] & & T \arrow[r,two heads,"d"] \arrow[dd,"b",dashed] & C \arrow[dd,"c",dashed] \\
	& K \arrow[ur,tail,"i" swap] \arrow[dd,"\tilde{a}",near start,dashed] & & \\
	Q' \arrow[rr,"f'",near start] \arrow[dr,two heads] & & T' \arrow[r,two heads,"d'"] & C' \\
	& K' \arrow[ur,tail,"i'",swap] & &
\end{tikzcd}.\] We set $\rho(a)=\tilde{a}$.

\begin{lem}\label{lem:KS-proj}
	The functor $\rho$ is well-defined and induces an equivalence $\overline{\rho}:\overline{\mathcal{P}}\rightarrow \mathcal{P}_{\mathcal{T}}$.
\end{lem}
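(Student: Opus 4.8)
The plan is to treat $\rho$ in three steps: well-definedness together with functoriality, the descent to $\overline{\mathcal{P}}$ with essential surjectivity, and finally full faithfulness via the embedding $\mathbb{P}$.

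First I would settle well-definedness on morphisms. Writing the fixed minimal approximation as $f = ij$ with $j\colon Q \twoheadrightarrow K$ a deflation and $i\colon K \rightarrowtail T$ an inflation (and similarly $f' = i'j'$ for $Q'$), the approximation property of $f$ applied to $f'a\colon Q \to T'$ yields $b\colon T \to T'$ with $bf = f'a$. Since $i$ is monic we have $\ker j = \ker f$, and $f|_{\ker j}=0$ forces $i'j'a|_{\ker j} = bf|_{\ker j}=0$, hence $j'a|_{\ker j}=0$ because $i'$ is monic; as $j$ is the cokernel of its kernel, $j'a$ factors as $\tilde a\,j$ for a unique $\tilde a\colon K \to K'$. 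The key observation is that $\tilde a$ is completely determined by the identity $\tilde a\,j = j'a$ and the fact that $j$ is epic; therefore $\rho(a)=\tilde a$ is independent of the auxiliary choice of $b$, and $\rho(\mathrm{id})=\mathrm{id}$, $\rho(a''a)=\rho(a'')\rho(a)$ follow by the same cancellation of $j$. On objects $K=\mathrm{Im}f$ is already seen to be independent of the chosen minimal $f$ in the discussion preceding the statement, so $\rho$ is a functor.

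Next I would check that $\rho$ kills $\mathcal{I}$: if $a\in\mathcal{I}$ then $\mathcal{E}(a,\mathcal{T})=0$, so in particular $f'a=0$, whence $i'j'a=0$, then $j'a=0=\tilde a\,j$, and finally $\rho(a)=0$ since $j$ is epic. Thus $\rho$ descends to $\overline{\rho}\colon\overline{\mathcal{P}}\to\mathcal{P}_{\mathcal{T}}$. Essential surjectivity is then immediate from the definition $\mathcal{P}_{\mathcal{T}}=\mathsf{add}\{K\}$ together with the remark that the image of an arbitrary admissible left $\mathcal{T}$-approximation agrees with that of the minimal one: each generator $K$ is exactly some $\rho(Q)=\overline{\rho}(Q)$.

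The heart of the argument is full faithfulness, which I would transport along $\mathbb{P}$. From the proof of Lemma \ref{lem:restriction modP} there is, for each $Q$, an isomorphism $\overline{\mathcal{P}}(-,Q)\cong (-,K)|_{\mathcal{P}}=\mathbb{P}(\rho(Q))$ exhibiting $\overline{\mathcal{P}}(-,Q)$ as the image of $(-,f)|_{\mathcal{P}}$. I would verify this is natural in $Q$: the surjections $\mathcal{P}(-,Q)\twoheadrightarrow\overline{\mathcal{P}}(-,Q)$ are natural, and precomposing with them one checks $i'\,\overline{\mathcal{P}}(-,\bar a)=(-,b)\,i$, so the induced map on images is precisely $\mathbb{P}(\tilde a)=\mathbb{P}(\rho(a))$. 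Hence $\mathbb{P}\circ\overline{\rho}$ is naturally isomorphic to $((-)\circ\pi)\circ(\text{Yoneda of }\overline{\mathcal{P}})$, a composite of fully faithful functors. Since $\mathbb{P}|_{\mathcal{P}_{\mathcal{T}}}$ is fully faithful, the cancellation principle ``$G$ faithful and $G\circ H$ fully faithful imply $H$ fully faithful'' yields that $\overline{\rho}$ is fully faithful, completing the equivalence.

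The main obstacle I anticipate is the full-faithfulness step, where the naturality of the Lemma \ref{lem:restriction modP} isomorphism must be genuinely checked rather than asserted, since the embeddings $i$ depend on the choices made for each $Q$. A secondary but real point is ensuring essential surjectivity reaches every summand of $\mathcal{P}_{\mathcal{T}}=\mathsf{add}\{K\}$: for this I would use that $\overline{\mathcal{P}}=\mathcal{P}/\mathcal{I}$ inherits idempotent completeness from the Krull--Schmidt category $\mathcal{E}$, equivalently that finitely generated projective $\overline{\mathcal{P}}$-modules are representable (cf. \cite{Auslander}), so that idempotents transported through the fully faithful $\overline{\rho}$ split in $\overline{\mathcal{P}}$ and the problem reduces to the generators already handled above.
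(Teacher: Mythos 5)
Your proposal is correct, and the well-definedness step is essentially the paper's: you characterize $\tilde a$ by $\tilde a j = j'a$ with $j$ epic, while the paper cancels against the monic $i'$ after factoring $b-b'$ through the cokernel of $f$ — the same computation read from the other side. Where you genuinely diverge is in establishing that $\overline{\rho}$ is an equivalence. The paper argues by hand: fullness comes from lifting a morphism $K\to K'$ along the deflation $j'$ using projectivity of $Q$; faithfulness is exactly the equivalence $\rho(a)=0 \Leftrightarrow a\in\mathcal{I}$ (you only prove the direction needed for descent, but your $\mathbb{P}$-argument supplies the converse implicitly); and denseness uses the Krull--Schmidt hypothesis directly via ${\rm End}_{\mathcal{E}}(K)\cong{\rm End}_{\mathcal{E}}(Q)/\mathcal{I}(Q,Q)$, so indecomposables go to indecomposables and every summand of a generator $K$ is hit. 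You instead transport everything along $\mathbb{P}$, identifying $\mathbb{P}\circ\overline{\rho}$ with the composite of the Yoneda embedding of $\overline{\mathcal{P}}$ and the fully faithful restriction functor $(-)\circ\pi$ (the naturality of the isomorphism $\overline{\mathcal{P}}(-,Q)\cong(-,K)|_{\mathcal{P}}$ is exactly the relation $\tilde a j = j'a$, so your check goes through), then cancel, and handle summands by splitting idempotents in $\overline{\mathcal{P}}$. Your route is more functorial and makes the role of Lemma \ref{lem:restriction modP} transparent; its cost is that essential surjectivity now depends on full faithfulness plus idempotent completeness of $\overline{\mathcal{P}}$, which you correctly note follows from the Krull--Schmidt standing assumption. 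The paper's route is more elementary and has the side benefit of exhibiting the isomorphism ${\rm End}_{\mathcal{E}}(K)\cong{\rm End}_{\mathcal{E}}(Q)/\mathcal{I}(Q,Q)$ explicitly, which is reused in the proof of Theorem \ref{thm:number component}.
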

\begin{proof}
	To show $\rho$ is well-defined it suffices to show $\tilde{a}$ does not depend on the choice of $b$. If there is another triple ($b',c',\tilde{a'}$) making the diagram commute. Then $(b-b')f=0$, hence $b-b'$ factor through $d$. Therefore $i'(\tilde{a}-\tilde{a'})=0$ and $\tilde{a}=\tilde{a'}$.
    
    Obviously, $\rho$ is additive and full. If $\tilde{a}=0$, then $f'a=0$. Since $f'$ is an approximation, $a\in \mathcal{I}$. Conversely if $a\in \mathcal{I}$, then $\tilde{a}=0$. Thus $\rho$ factor through $\pi$ and induces a fully faithful $\overline{\rho}:\overline{\mathcal{P}}\rightarrow \mathcal{P}_{\mathcal{T}}$.
    
    If $Q\in \mathcal{P}$ is indecomposable, then $K=\rho(Q)$ is also indecomposable because ${\rm End}_{\mathcal{E}}(K)\cong {\rm End}_{\mathcal{E}}(Q)/\mathcal{I}(Q,Q)$ which is local. Thus $\overline{\rho}$ is an equivalence.
\end{proof}

\begin{thm}\label{thm:number component}
	Let $\mathcal{T}$ be a support $\tau$-tilting subcategory of $\mathcal{E}$. Then $|\mathcal{T}|$ equals to the number of isomorphism classes of indecomposable projectives $Q$ such that ${\rm Hom}_{\mathcal{E}}(Q,\mathcal{T})\neq 0$.
\end{thm}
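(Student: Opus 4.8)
The plan is to split the claim into two equalities, $|\mathcal{T}|=|\mathcal{P}_{\mathcal{T}}|$ and $|\mathcal{P}_{\mathcal{T}}|=\#\{Q\}$, where $\{Q\}$ denotes the set of isomorphism classes of indecomposable projectives of $\mathcal{E}$ with ${\rm Hom}_{\mathcal{E}}(Q,\mathcal{T})\neq 0$ and $\mathcal{P}_{\mathcal{T}}$ is the subcategory of projectives of $\mathcal{E}_{\mathcal{T}}$ furnished by Lemma \ref{find subcategory}. The first equality is the tilting-theoretic core and rests on Theorem \ref{thm2}; the second is a bookkeeping statement about the quotient category $\overline{\mathcal{P}}$ that follows from the equivalence $\overline{\rho}$ of Lemma \ref{lem:KS-proj}.

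For the first equality I would first observe that $\mathcal{E}_{\mathcal{T}}$ is again Krull-Schmidt: its object class $\mathcal{A}_{\mathcal{T}}$ is a full, additively closed subcategory of the Krull-Schmidt category $\mathcal{E}$, so each object splits into indecomposables which, being summands, lie in $\mathcal{A}_{\mathcal{T}}$ and keep their (local) endomorphism rings. By Theorem \ref{thm2}, $\mathcal{T}$ is a $1$-tilting subcategory of $\mathcal{E}_{\mathcal{T}}$, while $\mathcal{P}_{\mathcal{T}}$ is the $0$-tilting subcategory of projectives. The subcategory $\mathcal{P}_{\mathcal{T}}^{<\infty}=\{X\in\mathcal{E}_{\mathcal{T}}\mid{\rm pd}_{\mathcal{E}_{\mathcal{T}}}X<\infty\}$ is intrinsic to $\mathcal{E}_{\mathcal{T}}$ and does not depend on the chosen tilting subcategory, so applying the proof of Proposition \ref{lem:number components} to each of $\mathcal{T}$ and $\mathcal{P}_{\mathcal{T}}$ yields isomorphisms of split Grothendieck groups $K_{0}(\mathcal{T})\stackrel{\simeq}{\longrightarrow}K_{0}(\mathcal{P}_{\mathcal{T}}^{<\infty})\stackrel{\simeq}{\longleftarrow}K_{0}(\mathcal{P}_{\mathcal{T}})$. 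As $K_{0}(\mathcal{T})$ and $K_{0}(\mathcal{P}_{\mathcal{T}})$ are free abelian with bases the indecomposables of $\mathcal{T}$ and of $\mathcal{P}_{\mathcal{T}}$, comparing ranks gives $|\mathcal{T}|=|\mathcal{P}_{\mathcal{T}}|$.

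For the second equality, the equivalence $\overline{\rho}\colon\overline{\mathcal{P}}\stackrel{\simeq}{\longrightarrow}\mathcal{P}_{\mathcal{T}}$ of Lemma \ref{lem:KS-proj} gives $|\mathcal{P}_{\mathcal{T}}|=|\overline{\mathcal{P}}|$, so it remains to count the indecomposables of $\overline{\mathcal{P}}=\mathcal{P}/\mathcal{I}$. For an indecomposable projective $Q$, the morphism ${\rm id}_{Q}$ lies in $\mathcal{I}$ exactly when ${\rm Hom}_{\mathcal{E}}(Q,\mathcal{T})=0$, so the nonzero objects $\pi(Q)$ of $\overline{\mathcal{P}}$ are precisely those with ${\rm Hom}_{\mathcal{E}}(Q,\mathcal{T})\neq 0$; each such $\pi(Q)$ is indecomposable since ${\rm End}_{\overline{\mathcal{P}}}(\pi(Q))={\rm End}_{\mathcal{E}}(Q)/\mathcal{I}(Q,Q)$ is a quotient of a local ring by the proper two-sided ideal $\mathcal{I}(Q,Q)$, hence local. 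Decomposing an arbitrary object of $\overline{\mathcal{P}}$ shows that every indecomposable arises in this way; and if $\pi(Q_{1})\cong\pi(Q_{2})$ for non-killed indecomposables, then mutually inverse maps modulo $\mathcal{I}$ have composites ${\rm id}-\epsilon$ with $\epsilon\in\mathcal{I}(Q_{i},Q_{i})\subseteq{\rm rad}\,{\rm End}_{\mathcal{E}}(Q_{i})$, hence invertible, forcing $Q_{1}\cong Q_{2}$ in $\mathcal{E}$. Thus $\pi$ induces a bijection onto the indecomposables of $\overline{\mathcal{P}}$, giving $|\overline{\mathcal{P}}|=\#\{Q\}$, and combining the two equalities finishes the proof.

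The main obstacle is the first equality $|\mathcal{T}|=|\mathcal{P}_{\mathcal{T}}|$: one must be sure that the hypotheses underlying Proposition \ref{lem:number components} genuinely transfer to $\mathcal{E}_{\mathcal{T}}$ (Krull-Schmidt, enough projectives $\mathcal{P}_{\mathcal{T}}$, and the applicability of Sauter's derived equivalence to both the $1$-tilting $\mathcal{T}$ and the $0$-tilting $\mathcal{P}_{\mathcal{T}}$), so that the single intrinsic category $\mathcal{P}_{\mathcal{T}}^{<\infty}$ can mediate the comparison of the two Grothendieck groups; the quotient-category count of the third paragraph, by contrast, is elementary once the local endomorphism rings are in hand.
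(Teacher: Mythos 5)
Your proposal is correct and follows essentially the same route as the paper: the paper likewise reduces to the two equalities $|\mathcal{T}|=|\mathcal{P}_{\mathcal{T}}|$ (via Theorem \ref{thm2} together with Proposition \ref{lem:number components}, both tilting subcategories of $\mathcal{E}_{\mathcal{T}}$ having the same number of indecomposables) and $|\mathcal{P}_{\mathcal{T}}|=|\mathcal{Q}|$ (via the equivalence $\overline{\rho}$ of Lemma \ref{lem:KS-proj} and the radical argument showing non-killed indecomposable projectives isomorphic in $\overline{\mathcal{P}}$ are already isomorphic in $\mathcal{P}$). Your write-up merely makes explicit some points the paper leaves implicit, such as $\mathcal{E}_{\mathcal{T}}$ being Krull--Schmidt and the role of $\mathcal{P}_{\mathcal{T}}^{<\infty}$ as the intrinsic mediator between the two split Grothendieck groups.
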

\begin{proof}
	Let $\mathcal{Q}$ denote the additively closed subcategory of which the indecomposable objects are indecomposable projectives $Q$ such that ${\rm Hom}_{\mathcal{E}}(Q,\mathcal{T})\neq 0$. We claim $|\mathcal{Q}|=|\mathcal{P}_{\mathcal{T}}|$. By Lemma \ref{lem:KS-proj}. It suffices to show that if indecomposable projective objects $Q_{1},Q_{2}\in \mathcal{Q}$ are isomorphic in $\overline{\mathcal{P}}$, they are also isomorphic in $\mathcal{P}$. Indeed, assume $f:Q_{1}\rightarrow Q_{2},~g:Q_{2}\rightarrow Q_{1}$ such that $\overline{fg}=\overline{\rm 1},~\overline{gf}=\overline{\rm 1}$. Then $1-gf\in {\rm rad}({\rm End}_{\mathcal{E}}(Q_{1}))$ hence $gf$ is an isomorphism. Similarily, we deduce $fg$ is an isomorphism and therefore $Q_{1}\cong Q_{2}$ in $\mathcal{P}$.
	
	By Theorem \ref{thm2} and Proposition~\ref{lem:number components} we have $|\mathcal{Q}|=|\mathcal{P}_{\mathcal{T}}|=|\mathcal{T}|$.
\end{proof}

\begin{cor}
	If there exists a $\tau$-tilting object $T$. Then $|\mathcal{P}|=|T|<\infty$ and every $\tau$-rigid subcategory satisfying (A) is of the form $\mathsf{add}X$ for some $X$.
\end{cor}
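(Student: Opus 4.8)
The plan is to reduce both assertions to the counting result of Theorem~\ref{thm:number component}, exploiting the elementary fact that in a Krull--Schmidt category a single object has only finitely many indecomposable summands. Write $\mathcal{T}=\mathsf{add}T$ for the given $\tau$-tilting object. Since $\mathcal{E}$ is Krull--Schmidt, $T$ decomposes as a finite direct sum of indecomposables, so the finiteness $|T|=|\mathsf{add}T|<\infty$ is immediate; the real content of the first claim is the identification of this number with $|\mathcal{P}|$.

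The key step is to show that the subcategory $\mathcal{Q}$ appearing in Theorem~\ref{thm:number component}, generated by those indecomposable projectives $Q$ with ${\rm Hom}_{\mathcal{E}}(Q,\mathcal{T})\neq 0$, is all of $\mathcal{P}$. For this I would invoke the definition of a $\tau$-tilting subcategory directly: for every nonzero indecomposable projective $Q$ there is a \emph{nonzero} left $\mathcal{T}$-approximation $f:Q\rightarrow T^{0}$ with $T^{0}\in\mathcal{T}$, so that $f\in{\rm Hom}_{\mathcal{E}}(Q,T^{0})$ witnesses ${\rm Hom}_{\mathcal{E}}(Q,\mathcal{T})\neq 0$. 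Hence every indecomposable projective belongs to $\mathcal{Q}$, i.e.\ $\mathcal{Q}=\mathcal{P}$. Theorem~\ref{thm:number component} then gives $|T|=|\mathcal{T}|=|\mathcal{Q}|=|\mathcal{P}|$, which together with the previous paragraph yields $|\mathcal{P}|=|T|<\infty$.

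For the second assertion, let $\mathcal{T}'$ be an arbitrary $\tau$-rigid subcategory satisfying (A). By Proposition~\ref{inclusion} it is contained in some support $\tau$-tilting subcategory $\mathcal{T}''$. Applying Theorem~\ref{thm:number component} to $\mathcal{T}''$ identifies $|\mathcal{T}''|$ with the number of isomorphism classes of indecomposable projectives $Q$ satisfying ${\rm Hom}_{\mathcal{E}}(Q,\mathcal{T}'')\neq 0$, and this is bounded above by $|\mathcal{P}|$, which is finite by the first part. Since $\mathcal{T}'\subseteq\mathcal{T}''$ is additively closed, its indecomposable objects form a subset of those of $\mathcal{T}''$, so $|\mathcal{T}'|\leq|\mathcal{T}''|<\infty$. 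Taking $X$ to be the direct sum of representatives of the finitely many indecomposables in $\mathcal{T}'$ then realizes $\mathcal{T}'=\mathsf{add}X$.

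I do not expect a deep obstacle here; the argument is essentially a propagation of finiteness. The one point requiring care is the logical order: the finiteness of $|\mathcal{P}|$ must be extracted first, from the mere existence of a single $\tau$-tilting object, before it can be used to force every $\tau$-rigid subcategory satisfying (A) to have finitely many indecomposables and hence to be of the form $\mathsf{add}X$. It is exactly this propagation --- one $\tau$-tilting object constrains $\mathcal{P}$, which in turn constrains every support $\tau$-tilting, and thus every $\tau$-rigid, subcategory --- that is the conceptual heart of the statement.
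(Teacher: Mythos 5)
Your argument is correct and follows exactly the route the paper intends: its proof of this corollary is the one-line citation of Theorem~\ref{thm:number component} and Proposition~\ref{inclusion}, and your write-up simply fleshes out those two applications (the nonzero approximations forcing ${\rm Hom}_{\mathcal{E}}(Q,\mathcal{T})\neq 0$ for every indecomposable projective, hence $|\mathcal{P}|=|T|<\infty$, and then embedding any $\tau$-rigid subcategory satisfying (A) into a support $\tau$-tilting one to propagate the finiteness). Your remark on the logical order of the two steps is exactly the right point of care; nothing is missing.
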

\begin{proof}
	By Theorem \ref{thm:number component} and Proposition \ref{inclusion}, it is clear.
\end{proof}

\begin{cor}\label{Cor:number of ind}
	Assume $|\mathcal{P}|<\infty$. For a $\tau$-rigid object $T$ satisfying (A), consider a pair $(T,Q)$ such that $Q\in \mathcal{P},~{\rm Hom}_{\mathcal{E}}(Q,T)=0$ and $|Q|$ is maximal. Then $|T|+|Q|\leq |\mathcal{P}|$ and it is an equality if and only if $T$ is a support $\tau$-tilting object.
\end{cor}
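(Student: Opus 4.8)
The plan is to reduce the statement to the counting identity of Theorem~\ref{thm:number component} by comparing $T$ with the support $\tau$-tilting subcategory produced by Proposition~\ref{inclusion}. Write $\mathcal{Q}$ for the additively closed subcategory whose indecomposables are the indecomposable projectives $Q_{0}$ with ${\rm Hom}_{\mathcal{E}}(Q_{0},T)\neq 0$, and $\mathcal{Q}'$ for those with ${\rm Hom}_{\mathcal{E}}(Q_{0},T)=0$. Since $\mathcal{E}$ is Krull-Schmidt and $|\mathcal{P}|<\infty$, every indecomposable projective lies in exactly one of these two classes, so $|\mathcal{P}|=|\mathcal{Q}|+|\mathcal{Q}'|$. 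First I would observe that any projective $Q$ with ${\rm Hom}_{\mathcal{E}}(Q,T)=0$ has all its indecomposable summands in $\mathcal{Q}'$; hence $|Q|$ is maximized, by taking one copy of each indecomposable of $\mathcal{Q}'$, precisely when $|Q|=|\mathcal{Q}'|$. Consequently the inequality $|T|+|Q|\leq|\mathcal{P}|$ is equivalent to $|T|\leq|\mathcal{Q}|$, and equality of the former is equivalent to $|T|=|\mathcal{Q}|$.

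Next I would apply Proposition~\ref{inclusion} to obtain a support $\tau$-tilting subcategory $\mathcal{T}'={\rm P}(\mathsf{Fac}(\mathsf{add}T))$ containing $\mathsf{add}T$, and apply Theorem~\ref{thm:number component} to $\mathcal{T}'$ to get $|\mathcal{T}'|=|\mathcal{Q}_{\mathcal{T}'}|$, where $\mathcal{Q}_{\mathcal{T}'}$ collects the indecomposable projectives admitting a nonzero map to $\mathcal{T}'$. The heart of the argument is to identify $\mathcal{Q}=\mathcal{Q}_{\mathcal{T}'}$ at the level of indecomposable projectives. The inclusion $\mathsf{add}T\subseteq\mathcal{T}'$ gives $\mathcal{Q}\subseteq\mathcal{Q}_{\mathcal{T}'}$ at once. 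For the reverse, given an indecomposable projective $Q_{0}$ and a nonzero $h\colon Q_{0}\to T''$ with $T''\in\mathcal{T}'\subseteq\mathsf{Fac}(\mathsf{add}T)$, I would pick a deflation $T_{1}\twoheadrightarrow T''$ with $T_{1}\in\mathsf{add}T$ and lift $h$ along it using projectivity of $Q_{0}$; the lift is nonzero because $h$ is, so ${\rm Hom}_{\mathcal{E}}(Q_{0},T)\neq 0$. This lifting step is the one I expect to require the most care, the only subtle point being that $\mathsf{add}T$ and $\mathcal{T}'$ generate the same factor category, which holds since $\mathsf{add}T\subseteq\mathcal{T}'\subseteq\mathsf{Fac}(\mathsf{add}T)$.

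Combining these, $|T|=|\mathsf{add}T|\leq|\mathcal{T}'|=|\mathcal{Q}_{\mathcal{T}'}|=|\mathcal{Q}|$, which is the asserted inequality. For the equality case, $|T|=|\mathcal{Q}|=|\mathcal{T}'|$ together with $\mathsf{add}T\subseteq\mathcal{T}'$ and the finiteness of $|\mathcal{T}'|$ forces $\mathsf{add}T=\mathcal{T}'$, since an inclusion between finite sets of indecomposables of equal cardinality must be an equality; hence $T$ is support $\tau$-tilting. Conversely, if $T$ is support $\tau$-tilting then $\mathsf{add}T={\rm P}(\mathsf{Fac}(\mathsf{add}T))=\mathcal{T}'$ by Corollary~\ref{T=Ext-proj}, so $|T|=|\mathcal{T}'|=|\mathcal{Q}|$ and the equality holds. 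All the finiteness used is guaranteed by $|\mathcal{Q}|\leq|\mathcal{P}|<\infty$.
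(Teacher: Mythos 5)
Your proof is correct and follows essentially the same route as the paper: the paper's (very terse) argument is precisely the identity $|Q|+|{\rm P}(\mathsf{Fac}T)|=|\mathcal{P}|$ — which packages your decomposition $|\mathcal{P}|=|\mathcal{Q}|+|\mathcal{Q}'|$, Proposition~\ref{inclusion}, Theorem~\ref{thm:number component}, and the lifting argument identifying the projectives supporting $T$ with those supporting ${\rm P}(\mathsf{Fac}T)$ — followed by the appeal to Corollary~\ref{T=Ext-proj} for the equality case. You have simply made explicit the steps the paper leaves to the reader.
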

\begin{proof}
	$|Q|+|{\rm P}(\mathsf{Fac}T)|=|\mathcal{P}|$ and the assertion follows by Corollary \ref{T=Ext-proj}.
\end{proof}

For convenience, we call the pair $(T,Q)$ in Corollary \ref{Cor:number of ind} a $\tau$-rigid (resp. support $\tau$-tilting) pair if $T$ is $\tau$-rigid (resp. support $\tau$-tilting) object.

\section{Generalized Brenner-Butler's theorem}

The following theorem generalizes \cite[Proposition 3.5]{G.Jasso} and partially generalizes \cite[Theorem 6.13]{Sauter} (cf. Remark \ref{final remark}). 

\begin{thm}\label{thm4}
	Let $\mathcal{E}=(\mathcal{A},\mathcal{S})$ be a skeletally small idempotent complete exact category with enough projectives $\mathcal{P}$ and $\mathcal{T}$ be a support $\tau$-tilting subcategory. Consider the functor
	\[\mathbb{P}:~\mathcal{E}\longrightarrow {\rm Mod}-\mathcal{P},~X\mapsto (-,X)|_{\mathcal{P}}\]
	and
	\[\mathbb{T}:~\mathcal{E}\longrightarrow \mathcal{T}-{\rm Mod},~X\mapsto (X,-)|_{\mathcal{T}}.\]
	Then we have:
	\begin{enumerate}
		\item $\mathsf{add}(\mathcal{P},-)|_{\mathcal{T}}$ is a tilting subcategory of $\mathcal{T}-{\rm mod}_{\infty}$.
		\item $(-,\mathcal{T})|_{\mathcal{P}}$ is a tilting subcategory of ${\rm mod}_{\infty}-\overline{\mathcal{P}}$.
		\item The adjoint pair
		\[-\otimes_{\mathcal{T}}\Psi:{\rm Mod}-\mathcal{T}\longrightarrow {\rm Mod}-\mathcal{P},~X\mapsto (Q\mapsto X\otimes_{\mathcal{T}}(Q,-)|_{\mathcal{T}})\]
		and
		\[{\rm Hom}_{\mathcal{P}}(\Psi,-):{\rm Mod}-\mathcal{P}\longrightarrow {\rm Mod}-\mathcal{T},~Y\mapsto (T\mapsto {\rm Hom}_{\mathcal{P}}((-,T)|_{\mathcal{P}},Y))\]
		given by the bifunctor $\Psi:\mathcal{P}^{op} \times \mathcal{T} \rightarrow {\rm Ab},~(Q,T)\mapsto {\rm Hom}_{\mathcal{E}}(Q,T)$
		restrict to mutually inverse exact equivalences between
		\[\mathsf{Fac}(-,\mathcal{T})|_{\mathcal{P}}\subseteq {\rm mod}_{\infty}-\mathcal{P}~({\rm i.e.~factor~objects~of~(-,\mathcal{T})|_{\mathcal{P}}~in}~{\rm mod}_{\infty}-\mathcal{P})\]
		and
		\[{_{\bot}((\mathcal{P},-)|_{\mathcal{T}})}:=\{N\in {\rm mod}_{\infty}-\mathcal{T}~|~{\rm Tor}_{>0}^{\mathcal{T}}(N,(\mathcal{P},-)|_{\mathcal{T}})=0\}\]
		whose exact structures are obtained by restricting those of ${\rm mod}_{\infty}-\mathcal{P}$ and ${\rm mod}_{\infty}-\mathcal{T}$ respectively.
		\item There is a commutative triangle of exact functors
		\[\begin{tikzcd}[column sep=tiny]
			& \mathsf{Fac}\mathcal{T} \arrow[dl,"\mathbb{P}" swap,end anchor={[xshift=3ex]}] \arrow[dr,"\mathbb{T}'"] & \\
			{\rm mod}_{\infty}-\mathcal{P}\supseteq \mathsf{Fac}(-,\mathcal{T})|_{\mathcal{P}} \arrow[rr,"\simeq","{\rm Hom}_{\mathcal{P}}(\Psi{,}-)" swap] & & {_{\bot}((\mathcal{P},-)|_{\mathcal{T}})}
		\end{tikzcd}\]
	    where $\mathbb{T}':~\mathcal{E}\longrightarrow {\rm Mod}-\mathcal{T},~X\mapsto (-,X)|_{\mathcal{T}}$.
	\end{enumerate}

\end{thm}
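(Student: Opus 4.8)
The plan is to deduce everything from two facts already in hand: that $\mathcal{T}$ is a tilting subcategory of $\mathcal{E}_{\mathcal{T}}$ (Theorem \ref{thm2}) and that the functor $\mathbb{P}$ of Lemma \ref{Enomoto} is fully faithful, exact and extension‑preserving. Parts (1) and (2) will be the two ``one‑sided'' tilting statements, and then (3)–(4) will follow from the general Brenner–Butler/tilting‑equivalence mechanism applied to the tilting subcategory produced in (2).

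For (1), fix $P\in\mathcal{P}$ with its approximation sequence $P\xrightarrow{f}T^{0}\twoheadrightarrow T^{1}$ and factor $f=i\,d$ with $d:P\twoheadrightarrow K$, $i:K\rightarrowtail T^{0}$, $K={\rm Im}f\in\mathcal{P}_{\mathcal{T}}$. Because $f$ is a left $\mathcal{T}$‑approximation, every $P\to S$ with $S\in\mathcal{T}$ factors through $d$, so $(d,-)\colon (K,-)|_{\mathcal{T}}\xrightarrow{\ \cong\ }(P,-)|_{\mathcal{T}}$ is an isomorphism of left $\mathcal{T}$‑modules. Applying ${\rm Hom}_{\mathcal{E}}(-,-)|_{\mathcal{T}}$ to the conflation $K\rightarrowtail T^{0}\twoheadrightarrow T^{1}$ and using ${\rm Ext}^{1}_{\mathcal{E}}(T^{1},\mathcal{T})=0$ gives an exact sequence $0\to(T^{1},-)|_{\mathcal{T}}\to(T^{0},-)|_{\mathcal{T}}\to(P,-)|_{\mathcal{T}}\to0$ by projectives, so ${\rm pd}\,(P,-)|_{\mathcal{T}}\le1$. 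Self‑orthogonality follows from this presentation: by Yoneda ${\rm Ext}^{1}((P,-)|_{\mathcal{T}},(P',-)|_{\mathcal{T}})$ is the cokernel of ${\rm Hom}_{\mathcal{E}}(P',T^{0})\to{\rm Hom}_{\mathcal{E}}(P',T^{1})$, which is surjective since $P'$ is projective and $T^{0}\twoheadrightarrow T^{1}$ a deflation. For the coresolution condition, take $T\in\mathcal{T}$ and a $\mathcal{P}_{\mathcal{T}}$‑projective resolution $K_{1}\rightarrowtail K_{0}\twoheadrightarrow T$ in $\mathcal{E}_{\mathcal{T}}$ (available because ${\rm pd}_{\mathcal{E}_{\mathcal{T}}}\mathcal{T}\le1$); applying ${\rm Hom}_{\mathcal{E}}(-,-)|_{\mathcal{T}}$ and again ${\rm Ext}^{1}_{\mathcal{E}}(T,\mathcal{T})=0$ yields the length‑one coresolution $0\to(T,-)|_{\mathcal{T}}\to(K_{0},-)|_{\mathcal{T}}\to(K_{1},-)|_{\mathcal{T}}\to0$ with $(K_{j},-)|_{\mathcal{T}}\cong(P_{j},-)|_{\mathcal{T}}\in\mathsf{add}(\mathcal{P},-)|_{\mathcal{T}}$.

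For (2) the point is to identify $\mathbb{P}(\mathcal{E}_{\mathcal{T}})={\rm Im}\mathbb{P}\cap{\rm mod}_{\infty}-\overline{\mathcal{P}}$ as a resolving subcategory of ${\rm mod}_{\infty}-\overline{\mathcal{P}}$ containing all its projectives. By Lemma \ref{lem:restriction modP} each representable $\overline{\mathcal{P}}(-,Q)\cong\mathbb{P}(K)$ with $K\in\mathcal{P}_{\mathcal{T}}$, so every projective of ${\rm mod}_{\infty}-\overline{\mathcal{P}}$ lies in the image; resolving‑ness follows from Lemma \ref{Enomoto} (idempotent completeness of $\mathcal{E}$) together with the subobject‑ and factor‑closedness of ${\rm Mod}-\overline{\mathcal{P}}$. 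Since a projective resolution by representables of any object of the image has all its syzygies again in the image, the groups ${\rm Ext}^{i}_{{\rm mod}_{\infty}-\overline{\mathcal{P}}}(\mathbb{P}X,\mathbb{P}Y)$ coincide with ${\rm Ext}^{i}_{\mathcal{E}_{\mathcal{T}}}(X,Y)$ via the full faithfulness of $\mathbb{P}$. Hence self‑orthogonality and ${\rm pd}\le1$ of $(-,\mathcal{T})|_{\mathcal{P}}=\mathbb{P}\mathcal{T}$ transfer directly from $\mathcal{T}$ being tilting in $\mathcal{E}_{\mathcal{T}}$, while the required coresolution of the projective $\overline{\mathcal{P}}(-,Q)=\mathbb{P}K$ is just $\mathbb{P}$ applied to $K\rightarrowtail T^{0}\twoheadrightarrow T^{1}$ from Proposition \ref{inclusion}.

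Once (2) exhibits $\mathcal{U}:=(-,\mathcal{T})|_{\mathcal{P}}$ as a tilting subcategory of $\mathcal{M}:={\rm mod}_{\infty}-\overline{\mathcal{P}}$ whose endomorphism category is $\mathcal{T}$ (because ${\rm Hom}_{\mathcal{M}}(\mathbb{P}T,\mathbb{P}T')={\rm Hom}_{\mathcal{E}}(T,T')$), part (3) becomes the functorial Brenner–Butler equivalence for $\mathcal{U}$; I expect this to be the main obstacle, since it requires establishing that the unit and counit of the given adjunction are isomorphisms exactly on the two subcategories. I would first check that the stated pair is $(-\otimes_{\mathcal{T}}\mathcal{U},{\rm Hom}_{\mathcal{M}}(\mathcal{U},-))$ through the computations ${\rm Hom}_{\mathcal{P}}(\Psi,\mathbb{P}T)=(-,T)|_{\mathcal{T}}$ and $(-,T)|_{\mathcal{T}}\otimes_{\mathcal{T}}\Psi=\mathbb{P}T$, which show both functors are mutually inverse on the tilting/representable objects; then a dimension‑shift and Five Lemma argument extends this to all of $\mathsf{Fac}(-,\mathcal{T})|_{\mathcal{P}}$ (the torsion class $\{{\rm Ext}^{1}(\mathcal{U},-)=0\}$ on which ${\rm Hom}_{\mathcal{P}}(\Psi,-)$ is exact) and to ${}_{\bot}((\mathcal{P},-)|_{\mathcal{T}})$ (precisely the class on which $-\otimes_{\mathcal{T}}\Psi$ is exact, since ${\rm Tor}^{\mathcal{T}}_{>0}(N,(P,-)|_{\mathcal{T}})$ is the value at $P$ of the left derived functors of $-\otimes_{\mathcal{T}}\Psi$). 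Finally (4) is immediate: full faithfulness of $\mathbb{P}$ gives ${\rm Hom}_{\mathcal{P}}(\Psi,\mathbb{P}X)(T)={\rm Hom}_{\mathcal{E}}(T,X)=(-,X)|_{\mathcal{T}}(T)$, i.e.\ ${\rm Hom}_{\mathcal{P}}(\Psi,-)\circ\mathbb{P}\cong\mathbb{T}'$ on $\mathsf{Fac}\mathcal{T}$, and $\mathbb{P}(\mathsf{Fac}\mathcal{T})\subseteq\mathsf{Fac}(-,\mathcal{T})|_{\mathcal{P}}$ because $\mathbb{P}$ is exact, so the triangle commutes and $\mathbb{T}'$ inherits exactness from the equivalence of (3).
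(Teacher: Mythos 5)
Your proposal is correct in substance and rests on the same skeleton as the paper's proof: both reduce everything to the fact that $\mathcal{T}$ is tilting in $\mathcal{E}_{\mathcal{T}}$ (Theorem \ref{thm2}) together with the properties of $\mathbb{P}$ from Lemma \ref{Enomoto}. The execution differs in a meaningful way, though. The paper routes all four parts through the auxiliary equivalence $F\colon {\rm mod}_{\infty}-\mathcal{P}_{\mathcal{T}}\xrightarrow{\simeq}{\rm mod}_{\infty}-\overline{\mathcal{P}}$ of Lemma \ref{triangle} and then simply cites Sauter's Theorem 6.13(1),(4),(5) for the tilting-theoretic content, finishing with a commutative diagram of adjoint pairs to reconcile $\Psi_{\mathcal{T}}$, $\overline{\Psi}$ and $\Psi$. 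You instead verify the tilting axioms for (1) and (2) by hand (your presentation $0\to(T^{1},-)|_{\mathcal{T}}\to(T^{0},-)|_{\mathcal{T}}\to(P,-)|_{\mathcal{T}}\to0$, the Yoneda computation of ${\rm Ext}^{1}$, and the identification of $\mathbb{P}(\mathcal{E}_{\mathcal{T}})$ as a resolving subcategory of ${\rm mod}_{\infty}-\overline{\mathcal{P}}$ containing its projectives are all correct and give a more self-contained argument), and your part (4) is a clean direct computation that bypasses the paper's citation of Sauter 6.13(5). The trade-off shows up in (3): what you describe there --- agreement of the adjoint functors on representables, the identification $\mathsf{Fac}\,(-,\mathcal{T})|_{\mathcal{P}}=\{M\mid{\rm Ext}^{>0}((-,\mathcal{T})|_{\mathcal{P}},M)=0\}$, exactness of each functor on its subcategory, and the unit/counit isomorphisms extended by dimension shift --- is precisely the content of Sauter's Theorem 6.13(4), so you are proposing to reprove the cited theorem rather than invoke it. The outline is the standard one and would go through, but none of these claims is actually established in your sketch, and you should also make explicit the $\mathcal{P}$-versus-$\overline{\mathcal{P}}$ bookkeeping (that ${\rm Hom}_{\mathcal{P}}(\Psi,-)$ and $-\otimes_{\mathcal{T}}\Psi$ agree with their $\overline{\mathcal{P}}$-counterparts on $\mathrm{Mod}-\overline{\mathcal{P}}$), which is what the paper's diagram of adjoint pairs is there to record.
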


We begin with some preparation. Let $\mathcal{T}$ be a $\tau$-rigid subcategory satisfying (A). Note that we can still define the functor $\rho:\mathcal{P}\rightarrow \mathcal{P}_{\mathcal{T}}$ as in the previous subsection without Krull-Schmidt assumption. But here, for every $Q\in \mathcal{P}$, we fix an arbitrary exact sequence $Q\stackrel{f} \rightarrow T\twoheadrightarrow C$ with $f$ a left $\mathcal{T}$-approximation instead. Obviously $\rho$ is still well-defined and induces a fully faithful functor $\overline{\rho}:\overline{\mathcal{P}}\rightarrow \mathcal{P}_{\mathcal{T}}$. But here $\overline{\rho}$ is not dense in general.

\begin{lem}
	The functor $\rho$ does not depend on the choice of $Q\stackrel{f} \rightarrow T\twoheadrightarrow C$ for every $Q\in \mathcal{P}$.
\end{lem}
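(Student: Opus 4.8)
The plan is to reduce the statement to the canonical behaviour of the adjunction $(-\otimes_{\mathcal{P}}\overline{\mathcal{P}})\dashv((-)\circ\pi)$ after transporting everything into ${\rm Mod}-\mathcal{P}$ along the fully faithful functor $\mathbb{P}$. Concretely, I would prove that $\mathbb{P}\circ\rho$ is naturally isomorphic to a functor assembled purely from $\mathcal{P}$, $\overline{\mathcal{P}}$ and the quotient $\pi$, hence visibly independent of the chosen sequences; full faithfulness of $\mathbb{P}$ (Lemma \ref{Enomoto}) then reflects this back to $\rho$ itself.

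Write $\Phi\colon\mathcal{P}\to{\rm Mod}-\mathcal{P}$ for the functor $Q\mapsto\overline{\mathcal{P}}(\pi(-),\pi Q)$, that is, $\mathbb{P}$ followed by $-\otimes_{\mathcal{P}}\overline{\mathcal{P}}$ and by $(-)\circ\pi$; it involves no choice. Let $\eta$ be the unit of the adjunction, so that $\eta_{\mathbb{P}(Q)}\colon\mathcal{P}(-,Q)\to\Phi(Q)$ is exactly the canonical surjection appearing in the proof of Lemma \ref{lem:restriction modP}. Now fix, for each $Q$, the chosen exact sequence $Q\xrightarrow{f}T\twoheadrightarrow C$ and factor $f=i\circ j_Q$ with $j_Q\colon Q\twoheadrightarrow K$ a deflation (admissible by (A)) and $i\colon K\rightarrowtail T$ an inflation, so that $K=\rho(Q)$. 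Since $\mathbb{P}$ is exact, $\mathbb{P}(j_Q)$ is an epimorphism and $\mathbb{P}(i)$ a monomorphism in the abelian category ${\rm Mod}-\mathcal{P}$, and re-reading the proof of Lemma \ref{lem:restriction modP} shows that the comparison produced there is an isomorphism $\theta_Q\colon\mathbb{P}(\rho(Q))=(-,K)|_{\mathcal{P}}\xrightarrow{\ \sim\ }\Phi(Q)$ uniquely determined, by uniqueness of epi--mono factorizations of $\mathbb{P}(f)$, by the relation $\theta_Q\circ\mathbb{P}(j_Q)=\eta_{\mathbb{P}(Q)}$.

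It remains to verify that $\theta=(\theta_Q)_Q$ is natural, i.e. $\Phi(a)\circ\theta_Q=\theta_{Q'}\circ\mathbb{P}(\rho(a))$ for every $a\colon Q\to Q'$. By construction of $\rho$ on morphisms one has $\rho(a)\circ j_Q=j_{Q'}\circ a$, and since $\mathbb{P}(j_Q)$ is an epimorphism it suffices to check the identity after precomposing with $\mathbb{P}(j_Q)$; using $\theta_Q\circ\mathbb{P}(j_Q)=\eta_{\mathbb{P}(Q)}$ both sides then reduce to $\Phi(a)\circ\eta_{\mathbb{P}(Q)}$ and $\eta_{\mathbb{P}(Q')}\circ\mathbb{P}(a)$ respectively, which coincide by naturality of the unit $\eta$ applied to $\mathbb{P}(a)$. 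Thus $\mathbb{P}\rho\cong\Phi$ naturally. Given two families of chosen sequences, with associated functors $\rho$ and $\rho'$, we obtain natural isomorphisms $\mathbb{P}\rho\cong\Phi\cong\mathbb{P}\rho'$; as $\mathbb{P}$ is fully faithful it reflects isomorphisms and natural transformations, yielding a natural isomorphism $\rho\cong\rho'$.

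The main obstacle is the middle step: pinning down the comparison isomorphism $\theta_Q$ precisely enough---namely its compatibility $\theta_Q\circ\mathbb{P}(j_Q)=\eta_{\mathbb{P}(Q)}$ with the unit, extracted from the uniqueness of epi--mono factorizations---so that the naturality check collapses to the naturality of the adjunction unit. Everything else (exactness and full faithfulness of $\mathbb{P}$, and the relation $\rho(a)\circ j_Q=j_{Q'}\circ a$) is already available from the earlier lemmas and the very definition of $\rho$.
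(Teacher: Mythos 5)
Your proof is correct, but it takes a genuinely different route from the paper's. The paper argues entirely inside $\mathcal{E}$: given two chosen sequences for the same $Q$, it applies the construction of $\rho$ on morphisms to the identity $1_Q$ in both directions, producing $h\colon K\to K'$ and $h'\colon K'\to K$, and then invokes the uniqueness argument already established in the proof of Lemma \ref{lem:KS-proj} (independence of the chosen lift $b$) to conclude $hh'=1$ and $h'h=1$, hence $K\cong K'$ canonically and $\rho\cong\rho'$ --- a two-line diagram chase. You instead transport the problem into ${\rm Mod}\text{-}\mathcal{P}$ and identify $\mathbb{P}\rho$ with the choice-free functor $Q\mapsto\overline{\mathcal{P}}(-,Q)$ via the uniqueness of epi--mono factorizations of $\mathbb{P}(f)$ and the naturality of the adjunction unit, then reflect back along the fully faithful $\mathbb{P}$. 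Your key normalization $\theta_Q\circ\mathbb{P}(j_Q)=\eta_{\mathbb{P}(Q)}$ is legitimate (both composites are epi--mono factorizations of the same map, as the commutative square in the proof of Lemma \ref{lem:restriction modP} shows), and the relation $\rho(a)\circ j_Q=j_{Q'}\circ a$ does hold by construction, so the naturality check goes through. What your approach buys is a sharper statement --- $\overline{\rho}$ is, up to $\mathbb{P}$, exactly the representable functor $Q\mapsto\overline{\mathcal{P}}(-,Q)$, which is the identification used again in Lemma \ref{triangle} --- at the cost of being considerably longer than the paper's direct argument; the paper's proof also works without ever invoking the functor category machinery.
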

\begin{proof}
	If we fix another sequence $Q\stackrel{f'} \rightarrow T'\twoheadrightarrow C'$ for every $Q$, then we obtain another $\rho'$. Consider the diagrams
	\[\begin{tikzcd}[column sep=small,row sep=tiny]
		Q \arrow[rr,"f"] \arrow[dr,two heads] \arrow[dd,equal] & & T \arrow[r,two heads] \arrow[dd,dashed] & C \arrow[dd,dashed] \\
		& K \arrow[ur,tail] \arrow[dd,"h",near start,dashed] & & \\
		Q \arrow[rr,"f'",near start] \arrow[dr,two heads] & & T' \arrow[r,two heads] & C' \\
		& K' \arrow[ur,tail] & &
	\end{tikzcd} {\rm and}~
      \begin{tikzcd}[column sep=small,row sep=tiny]
      	Q \arrow[rr,"f'"] \arrow[dr,two heads] \arrow[dd,equal] & & T' \arrow[r,two heads] \arrow[dd,dashed] & C' \arrow[dd,dashed] \\
      	& K' \arrow[ur,tail] \arrow[dd,"h'",near start,dashed] & & \\
      	Q \arrow[rr,"f",near start] \arrow[dr,two heads] & & T \arrow[r,two heads] & C \\
      	& K \arrow[ur,tail] & &
      \end{tikzcd}\]
    Then as in the proof of Lemma \ref{lem:KS-proj} we have $hh'=1$ and $h'h=1$. Thus $K\cong K'$ and $\rho \cong \rho'$ canonically. 
\end{proof}

Consider the functor
\[F:~{\rm Mod}-\mathcal{P}_{\mathcal{T}}\longrightarrow {\rm Mod}-\overline{\mathcal{P}},~X\mapsto X\circ \overline{\rho}\]
which is clearly exact.

\begin{lem}\label{triangle}
	The functor $F$ induces an exact equivalence 
	$${\rm mod}_{\infty}-\mathcal{P}_{\mathcal{T}}\stackrel{\simeq}\longrightarrow {\rm mod}_{\infty}-\overline{\mathcal{P}}$$ 
	and there is a commutative triangle of exact functors
	\[\begin{tikzcd}[column sep=tiny]
		& \mathcal{E}_{\mathcal{T}} \arrow[dl,"\mathbb{P}_{\mathcal{T}}",swap] \arrow[dr,"\mathbb{P}"] & \\
		{\rm mod}_{\infty}-\mathcal{P}_{\mathcal{T}} \arrow{rr}{\simeq}[swap]{F} & & {\rm mod}_{\infty}-\overline{\mathcal{P}}
	\end{tikzcd}\]
\end{lem}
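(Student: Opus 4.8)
The plan is to split the statement into two parts: first that $F$ restricts to an equivalence on the ${\rm mod}_{\infty}$-levels, and second that the triangle commutes, i.e. $F\circ \mathbb{P}_{\mathcal{T}}\cong \mathbb{P}$ on $\mathcal{E}_{\mathcal{T}}$. The conceptual input for the first part is that $\overline{\rho}:\overline{\mathcal{P}}\rightarrow \mathcal{P}_{\mathcal{T}}$ is fully faithful (established just before this lemma) and summand-dense: since $\rho$ is additive, every $K_{1}\oplus\cdots\oplus K_{n}$ with $K_{i}=\rho(Q_{i})$ equals $\rho(Q_{1}\oplus\cdots\oplus Q_{n})$, so by definition of $\mathcal{P}_{\mathcal{T}}=\mathsf{add}\{\rho(Q)\}$ every object of $\mathcal{P}_{\mathcal{T}}$ is a direct summand of some $\rho(Q)$. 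Thus $F=(-)\circ\overline{\rho}$ is restriction along a summand-dense fully faithful functor, and I would prove that such a restriction induces an equivalence on finitely ($\infty$-)presented modules.

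For the equivalence I would proceed in four steps. (i) $F$ is exact, being a restriction functor. (ii) $F$ preserves finitely generated projectives: full faithfulness of $\overline{\rho}$ gives $F(\mathcal{P}_{\mathcal{T}}(-,\rho Q))\cong \overline{\mathcal{P}}(-,\pi Q)$, and passing to summands shows $F$ carries every finitely generated projective to one; combined with (i) this yields $F({\rm mod}_{\infty}-\mathcal{P}_{\mathcal{T}})\subseteq {\rm mod}_{\infty}-\overline{\mathcal{P}}$. (iii) $F$ is fully faithful on ${\rm mod}_{\infty}$: for a finitely generated projective $P=\mathcal{P}_{\mathcal{T}}(-,\rho Q)$ and any $N$, Yoneda gives ${\rm Hom}(P,N)\cong N(\rho Q)\cong (FN)(\pi Q)\cong {\rm Hom}(FP,FN)$, and this persists for summands by additivity; taking a projective presentation $P_{1}\rightarrow P_{0}\rightarrow M\rightarrow 0$, applying ${\rm Hom}(-,N)$ and ${\rm Hom}(F-,FN)$ and comparing the resulting left-exact sequences via the five lemma yields ${\rm Hom}(M,N)\cong {\rm Hom}(FM,FN)$. (iv) $F$ is dense on ${\rm mod}_{\infty}$: any $N\in {\rm mod}_{\infty}-\overline{\mathcal{P}}$ has a presentation by representables $\overline{\mathcal{P}}(-,\pi Q_{1})\rightarrow \overline{\mathcal{P}}(-,\pi Q_{0})\rightarrow N\rightarrow 0$; by (iii) the connecting map lifts to $\mathcal{P}_{\mathcal{T}}(-,\rho Q_{1})\rightarrow \mathcal{P}_{\mathcal{T}}(-,\rho Q_{0})$, whose cokernel $M\in {\rm mod}_{\infty}-\mathcal{P}_{\mathcal{T}}$ satisfies $FM\cong N$ by right exactness of $F$. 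Steps (iii) and (iv) give the asserted equivalence.

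For the triangle I must produce a natural isomorphism $\mathbb{P}(X)\cong F(\mathbb{P}_{\mathcal{T}}(X))$ for $X\in\mathcal{A}_{\mathcal{T}}$. Evaluating both sides at $\pi Q$ gives $\mathbb{P}(X)(\pi Q)={\rm Hom}_{\mathcal{E}}(Q,X)$ and $F(\mathbb{P}_{\mathcal{T}}(X))(\pi Q)=\mathbb{P}_{\mathcal{T}}(X)(\rho Q)={\rm Hom}_{\mathcal{E}}(K,X)$ with $K=\rho(Q)$. I would obtain the comparison from the isomorphism $\overline{\mathcal{P}}(-,Q)\cong (-,K)|_{\mathcal{P}}=\mathbb{P}(K)$ already produced inside the proof of Lemma \ref{lem:restriction modP}, together with the fact that $X\in\mathcal{A}_{\mathcal{T}}$ forces $\mathbb{P}(X)\in {\rm Mod}-\overline{\mathcal{P}}$; Yoneda and full faithfulness of $\mathbb{P}$ then give
\[{\rm Hom}_{\mathcal{E}}(Q,X)\cong {\rm Hom}_{\overline{\mathcal{P}}}(\overline{\mathcal{P}}(-,Q),\mathbb{P}(X))\cong {\rm Hom}_{\mathcal{P}}(\mathbb{P}(K),\mathbb{P}(X))\cong {\rm Hom}_{\mathcal{E}}(K,X),\]
the isomorphism being induced by the deflation $j:Q\twoheadrightarrow K$ coming from $f=ij$. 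Chasing the functoriality of the isomorphism in Lemma \ref{lem:restriction modP} makes this natural in $Q$ and in $X$, which is exactly the commutativity $F\circ \mathbb{P}_{\mathcal{T}}\cong \mathbb{P}$.

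I expect the main obstacle to be two-fold, and to lie in the bookkeeping rather than in any single hard idea: first, checking that all the constructions stay inside ${\rm mod}_{\infty}$ (so that density genuinely lands in ${\rm mod}_{\infty}-\mathcal{P}_{\mathcal{T}}$ and $F$ does not leave ${\rm mod}_{\infty}-\overline{\mathcal{P}}$), and second, verifying the naturality of ${\rm Hom}_{\mathcal{E}}(Q,X)\cong {\rm Hom}_{\mathcal{E}}(K,X)$ in $Q$, since $\rho$ is only defined up to the chosen approximations; I would handle the latter by appealing to the canonical compatibility of the isomorphisms $\overline{\mathcal{P}}(-,Q)\cong \mathbb{P}(K)$ with morphisms in $\overline{\mathcal{P}}$, exactly as in the well-definedness of $\rho$.
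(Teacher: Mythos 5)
Your proposal is correct and follows essentially the same route as the paper: the key isomorphisms $(-,K)|_{\mathcal{P}}\cong\overline{\mathcal{P}}(-,Q)$ (from Lemma \ref{lem:restriction modP}) and $(K,X)\cong(Q,X)$ for $X\in\mathcal{A}_{\mathcal{T}}$ drive both the preservation of finitely generated projectives and the commutativity of the triangle, exactly as in the paper's argument. The only difference is that you spell out the full-faithfulness and density steps (via projective presentations, Yoneda and the five lemma) that the paper dismisses with ``clearly'' and ``we can easily check''.
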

\begin{proof}
	For any $X\in \mathcal{E}_{\mathcal{T}}$, we have $F((-,X)|_{\mathcal{P}_{\mathcal{T}}}) = (-,X)|_{\mathcal{P}}$. Indeed for every $Q\in \mathcal{P}$,
	$$(-,X)|_{\mathcal{P}_{\mathcal{T}}}\circ \overline{\rho}(Q)=(K,X) \cong (Q,X)=(-,X)|_{\mathcal{P}}(Q).$$ 
	If $X\in \mathcal{P}_{\mathcal{T}}$ then $X$ is a direct summand of some $K$ as in Lemma \ref{find subcategory}. Because $(-,K)|_{\mathcal{P}} \cong \overline{\mathcal{P}}(-,Q)$, the image $(-,X)|_{\mathcal{P}}$ of $(-,X)|_{\mathcal{P}_{\mathcal{T}}}=\mathcal{P}_{\mathcal{T}}(-,X)$ is a summand of $\overline{\mathcal{P}}(-,Q)$. Thus we have $F({\rm mod}_{\infty}-\mathcal{P}_{\mathcal{T}})\subseteq {\rm mod}_{\infty}-\overline{\mathcal{P}}$. Clearly $F$ is fully faithful on ${\rm Im}\mathbb{P}_{\mathcal{T}}$, so is it on ${\rm mod}_{\infty}-\mathcal{P}_{\mathcal{T}}$. We can easily check it is also dense and reflects exactness.
\end{proof}

\begin{proof}[Proof of Theorem 5.1]
	By Proposition \ref{thm2} we can construct a subcategory $\mathcal{E}_{\mathcal{T}}\subseteq \mathcal{E}$ such that $\mathcal{E}_{\mathcal{T}}$ is an exact category with enough projectives $\mathcal{P}_{\mathcal{T}}$ and $\mathcal{T}$ is a tilting subcategory of $\mathcal{E}_{\mathcal{T}}$. By \cite[Theorem 6.13(1)]{Sauter} we have $(-,\mathcal{T})|_{\mathcal{P}_{\mathcal{T}}}$ is a tilting subcategory of ${\rm mod}_{\infty}-\mathcal{P}_{\mathcal{T}}$ and $(\mathcal{P}_{\mathcal{T}},-)|_{\mathcal{T}}$ is a tilting subcategory of $\mathcal{T}-{\rm mod}_{\infty}$. Therefore (2) follows by Lemma \ref{triangle}. Because $\mathsf{add}(\mathcal{P},-)|_{\mathcal{T}}=(\mathcal{P}_{\mathcal{T}},-)|_{\mathcal{T}}$, then we obtain (1).
	
	By \cite[Theorem 6.13(4)]{Sauter} the adjoint pair
	\[-\otimes_{\mathcal{T}}\Psi_{\mathcal{T}}:{\rm Mod}-\mathcal{T}\longrightarrow {\rm Mod}-\mathcal{P}_{\mathcal{T}},~X\mapsto (K\mapsto X\otimes_{\mathcal{T}}(K,-)|_{\mathcal{T}})\] and
	\[{\rm Hom}_{\mathcal{P}_{\mathcal{T}}}(\Psi_{\mathcal{T}},-):{\rm Mod}-\mathcal{P}_{\mathcal{T}}\longrightarrow {\rm Mod}-\mathcal{T},~Y\mapsto (T\mapsto {\rm Hom}_{\mathcal{P}_{\mathcal{T}}}((-,T)|_{\mathcal{P}_{\mathcal{T}}},Y))\]
	 restrict to mutually inverse exact equivalences between
	\[\{M\in {\rm mod}_{\infty}-\mathcal{P}_{\mathcal{T}}~|~{\rm Ext}_{{\rm Mod}-\mathcal{P}_{\mathcal{T}}}^{>0}((-,\mathcal{T})|_{\mathcal{P}_{\mathcal{T}}},M)=0\}=\mathsf{Fac}(-,\mathcal{T})|_{\mathcal{P}_{\mathcal{T}}}\subseteq {\rm mod}_{\infty}-\mathcal{P}_{\mathcal{T}}\] and
	\[\{N\in {\rm mod}_{\infty}-\mathcal{T}~|~{\rm Tor}_{>0}^{\mathcal{T}}(N,(\mathcal{P}_{\mathcal{T}},-)|_{\mathcal{T}})=0\}={_{\bot}((\mathcal{P},-)|_{\mathcal{T}})}\]
	where 
	$$\Psi_{\mathcal{T}}:\mathcal{P}_{\mathcal{T}}^{op} \times \mathcal{T} \rightarrow {\rm Ab},~(K,T)\mapsto {\rm Hom}_{\mathcal{E}_{\mathcal{T}}}(K,T)$$ 
	is a bifunctor. Note that
	$\mathsf{Fac}_{{\rm mod}_{\infty}-\overline{\mathcal{P}}}(-,\mathcal{T})|_{\mathcal{P}}=\mathsf{Fac}_{{\rm mod}_{\infty}-\mathcal{P}}(-,\mathcal{T})|_{\mathcal{P}}$. Thus the latter one has an exact structure. Moreover, we have a commutative diagram
	\[\begin{tikzcd}[row sep=huge,column sep=large]
		{\rm Mod}-\mathcal{P}_{\mathcal{T}} \arrow[dr,shift right=0.5ex,"{\rm Hom}(\Psi_{\mathcal{T}}{,}-)",swap,bend right] \arrow[r,"F"] & {\rm Mod}-\overline{\mathcal{P}} \arrow[d,shift right=0.5ex,"{\rm Hom}(\overline{\Psi}{,}-)",swap] \arrow[r,hook,"(-)\circ \pi"] & {\rm Mod}-\mathcal{P} \arrow[dl,shift right=0.5ex,bend left,"{\rm Hom}(\Psi{,}-)",swap,near start] \\
		& {\rm Mod}-\mathcal{T} \arrow[ul,shift right=0.5ex,"-\otimes_{\mathcal{T}}\Psi_{\mathcal{T}}",swap,bend left,near end] \arrow[u,shift right=0.5ex,"-\otimes_{\mathcal{T}}\overline{\Psi}",swap] \arrow[ur,shift right=0.5ex,bend right,"-\otimes_{\mathcal{T}}\Psi",swap] &
	\end{tikzcd}\]
    where 
    $$\overline{\Psi}:\overline{\mathcal{P}}^{op} \times \mathcal{T} \longrightarrow {\rm Ab},~(Q,T)\mapsto {\rm Hom}_{\mathcal{E}}(Q,T)$$ 
    is a bifunctor. This implies (3).
    
    By \cite[Theorem 6.13(5)]{Sauter} we have a commutative triangle of exact functors 
    \[\begin{tikzcd}[column sep=tiny]
    	& \mathsf{Fac}\mathcal{T} \arrow[dl,"\mathbb{P}_{\mathcal{T}}" swap] \arrow[dr,"\mathbb{T}'"] & \\
    	\mathsf{Fac}(-,\mathcal{T})|_{\mathcal{P}_{\mathcal{T}}} \arrow[rr,"\simeq","{\rm Hom}_{\mathcal{P}}(\Psi_{\mathcal{T}}{,}-)" swap] & & {_{\bot}((\mathcal{P},-)|_{\mathcal{T}})}
    \end{tikzcd}\]
    Combining with Lemma \ref{triangle}, then (4) follows.
\end{proof}

When $\mathcal{P}=\mathsf{add}P,~\mathcal{T}=\mathsf{add}T$ for some objects $P,T$. We have the following object version of Theorem \ref{thm4}.

\begin{cor}\label{object version}
		Let $\mathcal{E}=(\mathcal{A},\mathcal{S})$ be a skeletally small idempotent complete exact category with enough projectives $\mathcal{P}=\mathsf{add}P$ and $\mathcal{T}=\mathsf{add}T$ be a support $\tau$-tilting subcategory. Set $A={\rm End}_{\mathcal{E}}(P)$, $B={\rm End}_{\mathcal{E}}(T)$ and $M={\rm Hom}_{\mathcal{E}}(P,T)$. Then we have:
		\begin{enumerate}
			\item $M$ is a tilting object of $B-{\rm mod}_{\infty}$ and ${\rm mod}_{\infty}-A/I$, where $I={\rm ann}(M_{A})$.
			\item The adjoint pair
			\[-\otimes_{B}M:{\rm Mod}-B\longrightarrow {\rm Mod}-A\]
			and
			\[{\rm Hom}_{A}(M,-):{\rm Mod}-A\longrightarrow {\rm Mod}-B\]
			restrict to mutually inverse exact equivalences between
			\[\mathsf{Fac}M\subseteq {\rm mod}_{\infty}-A~({\rm i.e.~factor~objects~in}~{\rm mod}_{\infty}-A)\]
			and
			\[{_{\bot}M}:=\{N\in {\rm mod}_{\infty}-B~|~{\rm Tor}_{>0}^{B}(N,M)=0\}\]
			whose exact structures are obtained by restricting those of ${\rm mod}_{\infty}-A$ and ${\rm mod}_{\infty}-B$ respectively.
			\item There is a commutative triangle of exact functors
			\[\begin{tikzcd}[column sep=tiny]
				& \mathsf{Fac}T \arrow[dl,end anchor={[xshift=3ex]},"{\rm Hom}_{\mathcal{E}}(P{,}-)" swap] \arrow[dr,"{\rm Hom}_{\mathcal{E}}(T{,}-)"]& \\
				{\rm mod}_{\infty}-A\supseteq \mathsf{Fac}M \arrow[rr,"\simeq","{\rm Hom}_{A}(M{,}-)" swap] & & {_{\bot}M}
			\end{tikzcd}.\]
		\end{enumerate}
\end{cor}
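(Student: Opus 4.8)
The plan is to deduce the corollary from Theorem \ref{thm4} by specializing every functor category appearing there to a module category over an endomorphism ring. Since $\mathcal{P}=\mathsf{add}P$ and $\mathcal{T}=\mathsf{add}T$, evaluation at $P$ (resp. at $T$) gives the standard equivalences ${\rm Mod}-\mathcal{P}\simeq {\rm Mod}-A$, ${\rm Mod}-\mathcal{T}\simeq {\rm Mod}-B$ and $\mathcal{T}-{\rm Mod}\simeq B-{\rm Mod}$, each of which sends representable functors to finitely generated projective modules and hence restricts to an equivalence on the corresponding ${\rm mod}_{\infty}$-subcategories. First I would record these equivalences together with the dictionary of how the objects and functors of Theorem \ref{thm4} are transported: the left $\mathcal{T}$-module $(\mathcal{P},-)|_{\mathcal{T}}=(P,-)|_{\mathcal{T}}$ goes to ${}_{B}M$, the right $\mathcal{P}$-module $(-,\mathcal{T})|_{\mathcal{P}}=(-,T)|_{\mathcal{P}}$ goes to $M_{A}$, and the bifunctor $\Psi(Q,T')={\rm Hom}_{\mathcal{E}}(Q,T')$ becomes the $(B,A)$-bimodule $M$, so that $-\otimes_{\mathcal{T}}\Psi$ and ${\rm Hom}_{\mathcal{P}}(\Psi,-)$ become $-\otimes_{B}M$ and ${\rm Hom}_{A}(M,-)$.

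With this dictionary in hand, parts (2) and (3) are immediate: $\mathsf{Fac}(-,\mathcal{T})|_{\mathcal{P}}$, the condition ${\rm Tor}_{>0}^{\mathcal{T}}(-,(\mathcal{P},-)|_{\mathcal{T}})=0$ and ${}_{\bot}((\mathcal{P},-)|_{\mathcal{T}})$ translate verbatim into $\mathsf{Fac}M$, the vanishing of ${\rm Tor}_{>0}^{B}(-,M)$ and ${_{\bot}M}$, while $\mathbb{P}|_{\mathsf{Fac}\mathcal{T}}$ and $\mathbb{T}'|_{\mathsf{Fac}\mathcal{T}}$ become ${\rm Hom}_{\mathcal{E}}(P,-)$ and ${\rm Hom}_{\mathcal{E}}(T,-)$; transporting the exact equivalence and the commutative triangle of Theorem \ref{thm4}(3),(4) along the equivalences then yields (2) and (3). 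For the $B$-side of part (1), the tilting subcategory $\mathsf{add}(\mathcal{P},-)|_{\mathcal{T}}$ of $\mathcal{T}-{\rm mod}_{\infty}$ from Theorem \ref{thm4}(1) corresponds to $\mathsf{add}\,{}_{B}M$, so ${}_{B}M$ is a tilting object of $B-{\rm mod}_{\infty}$.

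The one computation specific to the object version concerns the ring $A/I$ in the $A$-side of part (1). I would verify that $\mathcal{I}(P,P)={\rm ann}(M_{A})=I$: an endomorphism $a\in {\rm End}_{\mathcal{E}}(P)$ lies in $\mathcal{I}$ exactly when $m\circ a=0$ for every $m:P\to T$, which is precisely the condition $Ma=0$. Since $\overline{P}$ is an additive generator of $\overline{\mathcal{P}}=\mathcal{P}/\mathcal{I}$ with ${\rm End}_{\overline{\mathcal{P}}}(\overline{P})=A/I$, evaluation gives ${\rm Mod}-\overline{\mathcal{P}}\simeq {\rm Mod}-(A/I)$, restricting to ${\rm mod}_{\infty}-\overline{\mathcal{P}}\simeq {\rm mod}_{\infty}-(A/I)$; as $M$ is annihilated by $I$ it becomes $M_{A/I}$, and transporting Theorem \ref{thm4}(2) shows $M_{A/I}$ is tilting in ${\rm mod}_{\infty}-(A/I)$, completing part (1). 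The main obstacle is not conceptual but bookkeeping: ensuring that each equivalence restricts correctly to the ${\rm mod}_{\infty}$-level and, above all, that the bimodule actions are matched so that the adjoint pair $(-\otimes_{B}M,\ {\rm Hom}_{A}(M,-))$ coincides with $(-\otimes_{\mathcal{T}}\Psi,\ {\rm Hom}_{\mathcal{P}}(\Psi,-))$ on the nose; this side-compatibility is where I would be most careful.
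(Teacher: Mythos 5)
Your proposal follows essentially the same route as the paper: transport Theorem \ref{thm4} along the evaluation equivalences ${\rm Mod}-\mathcal{P}\simeq{\rm Mod}-A$, ${\rm Mod}-\overline{\mathcal{P}}\simeq{\rm Mod}-A/I$ and $\mathcal{T}-{\rm Mod}\simeq B-{\rm Mod}$, checking that they restrict to the ${\rm mod}_{\infty}$-levels and that $\Psi$ becomes the bimodule $M$. Your explicit verification that $\mathcal{I}(P,P)={\rm ann}(M_{A})$ is a detail the paper leaves implicit, but the argument is the same.
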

\begin{proof}
	There is an equivalence 
	\[{\rm Mod}-\mathcal{P}\stackrel{\simeq}\longrightarrow {\rm Mod}-A,~F\mapsto F(P)\]
	which restricts to an equivalence
	\[{\rm mod}_{\infty}-\mathcal{P}\stackrel{\simeq}\longrightarrow {\rm mod}_{\infty}-A.\]
	Similarly we have 
	\[{\rm Mod}-\overline{\mathcal{P}}\stackrel{\simeq}\longrightarrow {\rm Mod}-A/I~\text{restricts to}~{\rm mod}_{\infty}-\overline{\mathcal{P}}\stackrel{\simeq}\longrightarrow {\rm mod}_{\infty}-A/I\]
	and
	\[\mathcal{T}-{\rm Mod}\stackrel{\simeq}\longrightarrow B-{\rm Mod}~\text{restricts to}~\mathcal{T}-{\rm mod}_{\infty}\stackrel{\simeq}\longrightarrow B-{\rm mod}_{\infty}.\]
	Thus (1) follows by Theorem \ref{thm4} (1) and (2).
	
	Consider the commutative diagram
    \[\begin{tikzcd}
    	{\rm Mod}-\mathcal{T} \arrow[r,"-\otimes_{\mathcal{T}}\Psi",shift left=0.5ex] \arrow[d,"\simeq"] & {\rm Mod}-\mathcal{P} \arrow[l,"{\rm Hom}_{\mathcal{P}}(\Psi{,}-)",shift left=0.5ex]\arrow[d,"\simeq"]\\
    	{\rm Mod}-B \arrow[r,"-\otimes_{B}M",shift left=0.5ex] & {\rm Mod}-A \arrow[l,"{\rm Hom}_{A}(M{,}-)",shift left=0.5ex]
    \end{tikzcd}.\] Then (2) follows by Theorem \ref{thm4} (3).

    We have commutative diagrams
    \[\begin{tikzcd}
    	\mathcal{E} \arrow[r,"\mathbb{P}"] \arrow[d,equal]& {\rm mod}_{\infty}-\mathcal{P} \arrow[d,"\simeq"] \\
    	\mathcal{E} \arrow[r,"(P{,}-)"] & {\rm mod}_{\infty}-A 
    \end{tikzcd}~\text{and}~\begin{tikzcd}
    \mathcal{E} \arrow[r,"\mathbb{T}'"] \arrow[d,equal]& {\rm Mod}-\mathcal{T} \arrow[d,"\simeq"] \\
    \mathcal{E} \arrow[r,"(T{,}-)"] & {\rm Mod}-B  
\end{tikzcd}.\] Therefore (3) follows by Theorem \ref{thm4} (4).
\end{proof}

\begin{rem}\label{final remark}
	(1) If $\mathcal{T}$ is a tilting subcategory, then Theorem \ref{thm4} is a part of \cite[Theorem 6.13]{Sauter}.
	
	(2) If $\mathcal{E}={\rm mod}-\Lambda$ ($\Lambda$ is an artin algebra) and $T$ is a support $\tau$-tilting module. Take $P=\Lambda $ in Corollary \ref{object version} and we have $B-{\rm mod}_{\infty}=B-{\rm mod},~{\rm mod}_{\infty}-A/I={\rm mod}-A/I,~{\rm mod}_{\infty}-A={\rm mod}-A$ and ${\rm mod}_{\infty}-B={\rm mod}-B$. Thus we obtain \cite[Proposition 3.5]{G.Jasso}.
\end{rem}

\section{Examples}

In this section, we illustrate some of our results with a simple example. 

Let $\Lambda$ be the finite dimensional algebra given by the quiver $1\stackrel{\alpha}\rightarrow 2\stackrel{\beta}\rightarrow 3$ with relation $\alpha \beta =0$. The Auslander-Reiten quiver $\Gamma({\rm mod}-\Lambda)$ and the support $\tau$-tilting quiver Q(s$\tau$-tilt$\Lambda$) of $\Lambda$ are as follows (see \cite[Example 6.4]{AIR} or \cite[Example 3.20]{G.Jasso}).
\begin{figure}[htbp]
	\begin{tikzpicture}
		\node (1) at(0,0){3};
		\node (2) at(0.8,0.8){$\begin{matrix}
				2\\3
			\end{matrix}$};
		\node (3) at(1.6,0){2};
		\node (4) at(2.4,-0.8){$\begin{matrix}
				1\\2
			\end{matrix}$};
		\node (5) at(3.2,0){1};
		\draw[->] (1)--(2);
		\draw[-,dotted] (1)--(3);
		\draw[->] (2)--(3);
		\draw[->] (3)--(4);
		\draw[-,dotted] (3)--(5);
		\draw[->] (4)--(5);
	\end{tikzpicture}
	\begin{tikzpicture}[commutative diagrams/every diagram]
		\node (a) at (90:4.6)      {$ 3 \oplus \begin{matrix}
				2\\3
			\end{matrix} \oplus \begin{matrix}
				1\\2
			\end{matrix} $};
		\node (b) at (90+72:4.6)   {$ 3 \oplus \begin{matrix}
				1\\2
			\end{matrix} \oplus 1 $};
		\node (c) at (90+72*2:4.6) {$ 3 \oplus 1 $};
		\node (d) at (90+72*3:4.6) {$ 3 $};
		\node (e) at (90+72*4:4.6) {$ 3 \oplus \begin{matrix}
				2\\3
			\end{matrix} $};
		\node (f) at (90:3)      {$ 2 \oplus \begin{matrix}
				2\\3
			\end{matrix} \oplus \begin{matrix}
				1\\2
			\end{matrix} $};
		\node (g) at (90+72:3)   {$ \begin{matrix}
				1\\2
			\end{matrix} \oplus 1 $};
		\node (h) at (90+72*2:3) {$ 1 $};
		\node (i) at (90+72*3:3) {$ 0 $};
		\node (j) at (90+72*4:3) {$ 2 \oplus \begin{matrix}
				2\\3
			\end{matrix} $};
		\node (k) at (90+36:3*0.809017)    {$ 2 \oplus \begin{matrix}
				1\\2
			\end{matrix} $};
		\node (l) at (90-72-36:3*0.809017) {$ 2 $};
		\path[commutative diagrams/.cd, every arrow]
		(a) edge (b)
		(b) edge (c)
		(c) edge (d)
		(a) edge (e)
		(e) edge (d)
		(a) edge (f)
		(f) edge (k)
		(k) edge (g)
		(g) edge (h)
		(h) edge (i)
		(f) edge (j)
		(j) edge (l)
		(l) edge (i)
		(k) edge (l)
		(b) edge (g)
		(e) edge (j)
		(d) edge (i)
		(c) edge (h);
	\end{tikzpicture}
\end{figure}

Let $T$ be the support $\tau$-tilting module $2 \oplus \begin{matrix}
	2\\3
\end{matrix} \oplus \begin{matrix}
	1\\2
\end{matrix}$. Then by Lemma \ref{enough proj T}, $\mathcal{E}:=\mathsf{Fac}T=\mathsf{add}(T\oplus 1)$ is a full exact subcategory of ${\rm mod}-\Lambda$ with enough projectives $\mathsf{add}T$. We can check directly that all basic $\tau$-rigid pairs $(U,Q)$ of $\mathcal{E}$ are as follows:
\[(0,2\oplus \begin{matrix}
	2\\3
\end{matrix} \oplus \begin{matrix}
	1\\2
\end{matrix}),~(\begin{matrix}
	2\\3
\end{matrix},2\oplus \begin{matrix}
1\\2
\end{matrix}),~(1,2\oplus \begin{matrix}
2\\3
\end{matrix}),~(2 \oplus \begin{matrix}
2\\3
\end{matrix},\begin{matrix}
1\\2
\end{matrix}),\]
\[(\begin{matrix}
2\\3
\end{matrix} \oplus \begin{matrix}
1\\2
\end{matrix},0),~(1\oplus \begin{matrix}
2\\3
\end{matrix},2),~(2\oplus \begin{matrix}
2\\3
\end{matrix} \oplus \begin{matrix}
1\\2
\end{matrix},0),~(1\oplus \begin{matrix}
2\\3
\end{matrix} \oplus \begin{matrix}
1\\2
\end{matrix},0).\]
All of them are support $\tau$-tilting pairs except $(\begin{matrix}
	2\\3
\end{matrix} \oplus \begin{matrix}
	1\\2
\end{matrix},0)$, which can be completed to a support $\tau$-tilting pair $(1\oplus \begin{matrix}
2\\3
\end{matrix} \oplus \begin{matrix}
1\\2
\end{matrix},0)$ (see Proposition \ref{inclusion}). We can also check Corollary \ref{Cor:number of ind} easily. The Hasse quiver Q(s$\tau$-tilt$\mathcal{E}$) of the poset of all support $\tau$-tilting subcategories (or objects) of $\mathcal{E}$ (see Corollary \ref{poset}) is as follows.
\begin{figure}[htbp]
	\begin{tikzpicture}[commutative diagrams/every diagram]
		\node (f) at (90:2.5)      {$ 2 \oplus \begin{matrix}
				2\\3
			\end{matrix} \oplus \begin{matrix}
				1\\2
			\end{matrix} $};
		\node (g) at (90+72:2.5)   {$1\oplus \begin{matrix}
				2\\3
			\end{matrix}\oplus \begin{matrix}
				1\\2
			\end{matrix}$};
		\node (h) at (90+72*2:2.5) {$ 1 $};
		\node (i) at (90+72*3:2.5) {$ 0 $};
		\node (j) at (90+72*4:2.5) {$ 2 \oplus \begin{matrix}
				2\\3
			\end{matrix} $};
		\node (k) at (90+72+36:2.5*0.809017)    {$ 1 \oplus \begin{matrix}
				2\\3
			\end{matrix} $};
		\node (l) at (90-72-36:2.5*0.809017) {$\begin{matrix}
				2\\3
			\end{matrix}$};
		\path[commutative diagrams/.cd, every arrow]
		(f) edge (g)
		(k) edge (h)
		(g) edge (k)
		(h) edge (i)
		(f) edge (j)
		(j) edge (l)
		(l) edge (i)
		(k) edge (l);
	\end{tikzpicture}
\end{figure}

Note that all support $\tau$-tilting modules of ${\rm mod}-\Lambda$ in $\mathcal{E}$ are as follows:
\[2\oplus \begin{matrix}
	2\\3
\end{matrix} \oplus \begin{matrix}
	1\\2
\end{matrix},~2\oplus \begin{matrix}
1\\2
\end{matrix},~\begin{matrix}
1\\2
\end{matrix}\oplus 1,~2\oplus \begin{matrix}
2\\3
\end{matrix},~1,~2,~0.\]
Thus in general, for an artin algebra $\Lambda$ and a support $\tau$-tilting module $T$, support $\tau$-tilting modules of ${\rm mod}-\Lambda$ in $\mathsf{Fac}T$ are not support $\tau$-tilting objects of $\mathcal{E}=\mathsf{Fac}T$ and vice versa. For instance, support $\tau$-tilting module $\begin{matrix}
	1\\2
\end{matrix}\oplus 1$ is a $\tau$-rigid object of $\mathcal{E}$ but does not satisfy (A). Because left $\mathsf{add}(\begin{matrix}
1\\2
\end{matrix}\oplus 1)$ approximation of $\begin{matrix}
2\\3
\end{matrix}$ is $\begin{matrix}
2\\3
\end{matrix} \rightarrow \begin{matrix}
1\\2
\end{matrix}$, which is not admissible since its kernel is $3\notin \mathcal{E}$. For support $\tau$-tilting object $1\oplus \begin{matrix}
2\\3
\end{matrix}$ of $\mathcal{E}$, it is not $\tau$-rigid in ${\rm mod}-\Lambda$. However, we have a simple observation as follows. 

\begin{cor}
	Let $\Lambda$ be an artin algebra and $T,T'$ be support $\tau$-tilting modules such that $T'\leq T$. If the kernel of left $\mathsf{add}T'$-approximation of $T$ is in $\mathsf{Fac}T$, then $|T'|+|\tilde{T}|=|T|$ where $\tilde{T}$ is a maximal summand of $T$ such that ${\rm Hom}_{\Lambda}(\tilde{T},T')=0$.
\end{cor}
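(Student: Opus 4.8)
The plan is to recognize $\mathsf{add}T'$ as a support $\tau$-tilting subcategory of the exact category $\mathcal{E}=\mathsf{Fac}T$ and then read off the count from Theorem~\ref{thm:number component} applied inside $\mathcal{E}$. First I would set $\mathcal{E}=\mathsf{Fac}T$. By Lemma~\ref{enough proj T}, $\mathcal{E}$ is a full exact subcategory of ${\rm mod}-\Lambda$ with enough projectives $\mathsf{add}T$; since ${\rm mod}-\Lambda$ is Krull--Schmidt and $\mathcal{E}$ is closed under summands, $\mathcal{E}$ is a Krull--Schmidt (hence weakly idempotent complete) exact category. The hypothesis $T'\leq T$ means $\mathsf{Fac}T'\subseteq\mathsf{Fac}T=\mathcal{E}$, so $\mathsf{add}T'\subseteq\mathcal{E}$. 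As $\mathcal{E}$ is factor closed and extension closed in ${\rm mod}-\Lambda$, both the groups ${\rm Ext}^{1}$ and the operation $\mathsf{Fac}$ agree whether computed in $\mathcal{E}$ or in ${\rm mod}-\Lambda$; consequently ${\rm Ext}^{1}_{\mathcal{E}}(T',\mathsf{Fac}T')={\rm Ext}^{1}_{\Lambda}(T',\mathsf{Fac}T')=0$, so $\mathsf{add}T'$ is $\tau$-rigid in $\mathcal{E}$.

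Next I would verify condition (A) for $\mathsf{add}T'$ in $\mathcal{E}$, which I expect to be the main obstacle. Decompose $T$ into indecomposables and take a left $\mathsf{add}T'$-approximation $g\colon T\to T'^{0}$ of $T$ in ${\rm mod}-\Lambda$ as a direct sum of left approximations of the summands, so that each indecomposable summand $T_{i}$ of $T$ (an indecomposable projective of $\mathcal{E}$) receives a component $g_{i}\colon T_{i}\to T'^{0}_{i}$ that is again a left $\mathsf{add}T'$-approximation with $\ker g_{i}$ a summand of $\ker g$. Since $\mathsf{Fac}T$ is additively closed and $\ker g\in\mathsf{Fac}T$ by hypothesis, each $\ker g_{i}\in\mathsf{Fac}T=\mathcal{E}$. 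The key observation is that in the torsion class $\mathcal{E}$ the deflations are exactly the surjections with kernel in $\mathcal{E}$ and the inflations are exactly the injections between objects of $\mathcal{E}$; hence the canonical factorization $T_{i}\twoheadrightarrow{\rm Im}\,g_{i}\rightarrowtail T'^{0}_{i}$ is a deflation followed by an inflation precisely because $\ker g_{i}\in\mathcal{E}$. Thus the condition ``$\ker g\in\mathsf{Fac}T$'' is exactly the admissibility requirement inside the exact structure of $\mathcal{E}$, and (A) holds.

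Then I would upgrade $\tau$-rigidity to support $\tau$-tilting in $\mathcal{E}$ via Corollary~\ref{T=Ext-proj}: it suffices to check $\mathsf{add}T'={\rm P}(\mathsf{Fac}T')$. This equality holds in ${\rm mod}-\Lambda$ because $T'$ is support $\tau$-tilting there (again Corollary~\ref{T=Ext-proj}), and it persists in $\mathcal{E}$ since Ext-projectivity in $\mathsf{Fac}T'\subseteq\mathcal{E}$ is governed by the same ${\rm Ext}^{1}$-groups. Therefore $\mathsf{add}T'$ is a support $\tau$-tilting subcategory of $\mathcal{E}$ satisfying (A).

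Finally, apply Theorem~\ref{thm:number component} in $\mathcal{E}$: $|T'|$ equals the number of isomorphism classes of indecomposable projectives $Q$ of $\mathcal{E}$, i.e.\ indecomposable summands of $T$, with ${\rm Hom}_{\mathcal{E}}(Q,\mathsf{add}T')\neq 0$, which by fullness of $\mathcal{E}$ means ${\rm Hom}_{\Lambda}(Q,T')\neq 0$. Since $\tilde{T}$ is the direct sum of exactly those indecomposable summands $Q$ of $T$ with ${\rm Hom}_{\Lambda}(Q,T')=0$, the two conditions ${\rm Hom}_{\Lambda}(Q,T')\neq 0$ and ${\rm Hom}_{\Lambda}(Q,T')=0$ partition the indecomposable summands of $T$; counting both classes yields $|T|=|T'|+|\tilde{T}|$, as desired.
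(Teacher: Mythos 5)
Your proposal is correct and follows essentially the same route as the paper: realize $\mathcal{E}=\mathsf{Fac}T$ as a Krull--Schmidt exact category with projectives $\mathsf{add}T$, use the kernel hypothesis to make the left $\mathsf{add}T'$-approximation of $T$ admissible in $\mathcal{E}$, conclude that $\mathsf{add}T'$ is a support $\tau$-tilting subcategory of $\mathcal{E}$, and apply Theorem~\ref{thm:number component}. The only cosmetic difference is that you route the upgrade from $\tau$-rigid to support $\tau$-tilting through condition (A) and Corollary~\ref{T=Ext-proj}, whereas the paper directly shows the cokernel of the approximation lies in $\mathsf{add}T'={\rm P}(\mathsf{Fac}T')$ as in Proposition~\ref{inclusion}; these are the same argument.
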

\begin{proof}
	Let $\mathcal{E}:=\mathsf{Fac}T$ be the full exact subcategory with enough projectives $\mathsf{add}T$. Since $T'\in \mathcal{E}$ and ${\rm Ext}_{\Lambda}^{1}(T',\mathsf{Fac}T')=0$, then ${\rm Ext}_{\mathcal{E}}^{1}(T',\mathsf{Fac}_{\mathcal{E}}T')=0$. Consider the exact sequence $$T\stackrel{f}\rightarrow T^{1}\rightarrow T^{2}\rightarrow 0$$ with $f$ a left $\mathsf{add}T'$-approximation. By Lemma \ref{enough proj T} and $T'\leq T$, we have $f$ is also a left $\mathsf{Fac}T'$-approximation. Apply ${\rm Hom}_{\Lambda}(-,\mathsf{Fac}T')$ to the sequence, we deduce $T^{2}\in \mathsf{add}T'$ as in the proof of Proposition \ref{inclusion}. Thus the sequence is exact in $\mathcal{E}$. Therefore $T'$ is a support $\tau$-tilting object in $\mathcal{E}$ and then the result follows by Theorem \ref{thm:number component}.
\end{proof}

\bibliographystyle{plain}

\end{document}